\documentclass[10pt, leqno]{article}

\usepackage{amsmath,amssymb,amsthm, epsfig}

\usepackage{amsmath}

\usepackage{lineno}

\usepackage{comment}

\usepackage[pdftex]{color}

\usepackage{comment}

\usepackage{stackrel}

\usepackage{mathtools}
\usepackage{color}

\usepackage{ulem}

\usepackage{mathrsfs}
\usepackage[mathscr]{euscript}

\usepackage{wasysym}

\usepackage{stackrel}

\usepackage{amsmath,amssymb,amsthm, epsfig}
\usepackage[colorlinks=true,urlcolor=blue,
citecolor=red,linkcolor=blue,linktocpage,pdfpagelabels,
bookmarksnumbered,bookmarksopen]{hyperref}
\usepackage{ulem}
\usepackage{dsfont}
\usepackage{mathrsfs}
\usepackage{mathtools}
\usepackage{stackrel}
\usepackage{esint,enumerate}

\usepackage[left=2.5cm,right=2.5cm,top=2.5cm,bottom=2.5cm]{geometry}

\newcommand{\defeq}{\mathrel{\mathop:}=}

\newcommand{\intav}[1]{\mathchoice {\mathop{\vrule width 6pt height 3 pt depth  -2.5pt
\kern -8pt \intop}\nolimits_{\kern -6pt#1}} {\mathop{\vrule width
5pt height 3  pt depth -2.6pt \kern -6pt \intop}\nolimits_{#1}}
{\mathop{\vrule width 5pt height 3 pt depth -2.6pt \kern -6pt
\intop}\nolimits_{#1}} {\mathop{\vrule width 5pt height 3 pt depth
-2.6pt \kern -6pt \intop}\nolimits_{#1}}}

\newtheorem{theorem}{Theorem}[section]

\newtheorem{lemma}[theorem]{Lemma}

\newtheorem{proposition}[theorem]{Proposition}

\newtheorem{corollary}[theorem]{Corollary}

\theoremstyle{definition}

\newtheorem{definition}[theorem]{Definition}

\theoremstyle{remark}

\newtheorem{remark}[theorem]{Remark}

\numberwithin{equation}{section}

\usepackage{pgfplots}
\pgfplotsset{compat=newest}
\title{Schauder estimates for flat solutions to a class of fully nonlinear elliptic PDEs with Dini continuous data: a geometric tangential approach}
\author{\it by \smallskip \\ Junior da Silva Bessa \footnote{\noindent Universidade Estadual de Campinas - UNICAMP. Instituto de Matem\'{a}tica, Estat\'{i}stica e Computa\c{c}\~{a}o Cient\'{i}fica - IMECC. Departamento  de Matemática. Bar\~{a}o Geraldo, Campinas - SP, Brazil. \noindent \texttt{E-mail address: \url{jbessa@unicamp.br}}}, \quad Jo\~{a}o Vitor  da Silva
\footnote{\noindent Universidade Estadual de Campinas - UNICAMP Instituto de Matem\'{a}tica, Estat\'{i}stica e Computa\c{c}\~{a}o Cient\'{i}fica - IMECC. Departamento  de Matemática. Bar\~{a}o Geraldo, Campinas - SP, Brazil. \noindent \texttt{E-mail address: \url{jdasilva@unicamp.br}}},\\ \quad $\&$ \\\quad Laura Ospina\footnote{\noindent Universidade Estadual de Campinas - UNICAMP. Instituto de Matem\'{a}tica, Estat\'{i}stica e Computa\c{c}\~{a}o Cient\'{i}fica - IMECC. Departamento  de Matemática. Bar\~{a}o Geraldo, Campinas - SP, Brazil. \noindent \texttt{E-mail address: \url{l202049@dac.unicamp.br}}}
}
\date{}

\begin{document}

\maketitle

\begin{abstract}
\noindent  In this manuscript, we establish local Schauder estimates for flat viscosity solutions, that is, solutions with sufficiently small norms, to a class of fully nonlinear elliptic partial differential equations of the form
\[
F(D^{2} u, x) + \langle \mathfrak{B}(x), D u \rangle = f(x) \quad \text{in} \quad \mathrm{B}_1 \subset \mathbb{R}^{n},
\]
where the operator \(F\) is differentiable, though not necessarily convex or concave. In addition, we impose suitable Dini-type continuity assumptions on the data. Our methodology is based on geometric tangential techniques, combined with compactness and perturbative arguments. This approach is strongly motivated by recent advances in the theory of nonlinear elliptic equations and free boundary problems. As a byproduct of our analysis, we also obtain an Evans–Krylov type estimate. Our results can be viewed as an extension of the work by dos Prazeres and Teixeira~\cite[Theorem 2.2]{dosPrazTei2016}, now within the framework of linear drift terms and Dini continuity assumptions. Finally, we apply our results to characterize the nodal sets of flat viscosity solutions of non-convex, fully nonlinear, uniformly elliptic PDEs.

\medskip
\noindent \textbf{Keywords}: Fully nonlinear elliptic PDEs, local Schauder estimates, geometric tangential analysis, Dini continuity condition.
\vspace{0.2cm}
	
\noindent \textbf{AMS Subject Classification: Primary 35B65; 35J15; 35J60; Secondary 35B53; 35B45.  
}

\end{abstract}

\section{Introduction}

It is a well-known fact in the current literature that viscosity solutions to fully nonlinear elliptic equations 
\begin{equation}\label{Eq1.1}
F(D^2u) = 0 \quad \text{in} \quad \mathrm{B}_1,
\end{equation}
for a second-order, uniformly elliptic operator $F: \text{Sym}(n) \to \mathbb{R}$, are locally $C_{\text{loc}}^{1,\alpha_{\text{Hom}}}$ for some $\alpha_{\text{Hom}} \in (0,1)$ depending only on dimension and ellipticity constants $0< \lambda \le \Lambda$ (see \cite{Caffarelli1989} and \cite[Ch.~5]{CafCabre1995}). Regarding classical, i.e., $C^2$, solutions, the fundamental and independent contributions of Evans \cite{Evans1982} and Krylov \cite{Krylov1983} remain particularly significant. More precisely, for concave or convex operators $F$, they proved $C_{\text{loc}}^{2,\alpha}$ regularity for solutions to \eqref{Eq1.1} (see also \cite[Ch.~6]{CafCabre1995}).

\begin{theorem}[{\bf Evans--Krylov's Theorem}]\label{thm:evans-krylov}
Let \( F: \text{Sym}(n) \to \mathbb{R} \) be a convex (or concave) uniformly elliptic operator satisfying \( F(\mathbf{O}_n) = 0 \) (where $\mathbf{O}_{n}$ denotes the $n \times n$ zero symmetric matrix). Let \( u \in C^0(\mathrm{B}_1) \) be a viscosity solution of
\[
F(D^2 u) = 0 \quad \text{in } \quad  \mathrm{B}_1.
\]
Then,
\[
\|u\|_{C^{2,\alpha_{\mathrm{EK}}}(\mathrm{B}_{1/2})} \leq \mathrm{C} \|u\|_{L^{\infty}(\mathrm{B}_1)},
\]
for some \( \alpha_{\mathrm{EK}}  \in (0, 1) \) and constant \( \mathrm{C} > 0 \), depending only on the dimension \( n \), and the ellipticity constants \( \lambda \) and \( \Lambda \). Particular, if \( F \) is smooth, then \( u \in C^\infty(\mathrm{B}_1) \).
\end{theorem}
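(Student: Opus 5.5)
We follow the classical route: first reduce to classical solutions by approximation, then prove an interior \(C^{2,\alpha}\) estimate for smooth solutions using only concavity and ellipticity, and finally pass to the limit. Assume \(F\) is concave; the convex case follows by applying the result to \(\tilde F(M) = -F(-M)\) and \(-u\).

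\emph{Approximation.} Mollify \(F\) in the matrix variable. The result \(F_\varepsilon\) stays concave, uniformly elliptic with the same constants \(\lambda,\Lambda\), and satisfies \(F_\varepsilon(\mathbf{O}_n)\to 0\). Solve the Dirichlet problem \(F_\varepsilon(D^2u_\varepsilon)=F_\varepsilon(\mathbf{O}_n)\) in \(\mathrm{B}_{3/4}\) with \(u_\varepsilon=u\) on \(\partial\mathrm{B}_{3/4}\). Boundary regularity comes from barriers together with interior \(C^{1,\alpha_{\mathrm{Hom}}}\) estimates (\cite[Ch.~5]{CafCabre1995}); smoothness of \(u_\varepsilon\) can be obtained by the method of continuity once the a priori estimate below is available. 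By stability of viscosity solutions and uniqueness (comparison), \(u_\varepsilon\to u\) locally uniformly. It therefore suffices to prove the estimate for \(C^{3}\) solutions, with constants depending only on \(n,\lambda,\Lambda\).

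\emph{A priori estimate for smooth solutions.}

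1. Differentiate the equation twice in a unit direction \(e\). Since \(F\) is concave, \(a_{ij}(x)=F_{ij}(D^2u(x))\) satisfies \(\lambda I\le a_{ij}\le \Lambda I\) and
\[
a_{ij}\,\partial_{ij}(u_{ee}) = -F_{ij,kl}\,u_{ije}\,u_{kle}\ \ge 0 .
\]
So each pure second derivative \(u_{ee}\) is a subsolution of a linear equation with bounded measurable coefficients. Note that \(a_{ij}\) depends on \(x\) with no continuity control.

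2. Concavity also gives, for any \(x,y\),
\[
0=F(D^2u(y))-F(D^2u(x))\le a_{ij}(x)\bigl(u_{ij}(y)-u_{ij}(x)\bigr).
\]
By the classical representation (Motzkin--Wasow), \(a_{ij}(x)=\sum_k \beta_k(x)\, e_k\otimes e_k\) for a fixed finite family of unit directions \(e_k\), with \(\beta_k\in[\lambda^*,\Lambda^*]\). Hence \(\sum_k \beta_k(x)\bigl(u_{e_ke_k}(y)-u_{e_ke_k}(x)\bigr)\ge 0\).

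3. On \(\mathrm{B}_{2r}\), set \(M_k=\sup u_{e_ke_k}\) and \(m_k=\inf u_{e_ke_k}\). Apply the weak Harnack inequality (\cite[Ch.~4]{CafCabre1995}) to the nonnegative supersolutions \(M_k-u_{e_ke_k}\) on \(\mathrm{B}_{r}\). Combine this with step 2, which prevents all the \(u_{e_ke_k}\) from being simultaneously far below their suprema on large sets. The outcome is the oscillation decay
\[
\sum_k \operatorname{osc}_{\mathrm{B}_{r}} u_{e_ke_k}\le \theta \sum_k \operatorname{osc}_{\mathrm{B}_{2r}} u_{e_ke_k}, \qquad \theta<1 .
\]
Iterating gives a Hölder modulus of \(D^2u\) with exponent \(\alpha_{\mathrm{EK}}\).

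4. The base bound \(\|D^2u\|_{L^\infty(\mathrm{B}_{5/8})}\le \mathrm{C}\|u\|_{L^\infty(\mathrm{B}_1)}\) comes from two ingredients. The first is the \(C^{1,1}\) estimate for concave operators: \(u_{ee}\) is a subsolution, so the local maximum principle bounds \(\sup u_{ee}\) by its \(L^\varepsilon\) norm, and \(W^{2,\varepsilon}\) estimates control that norm. The second is ellipticity, which converts the one-sided bound into a two-sided one.

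5. Covering plus interpolation then yield \(\|u\|_{C^{2,\alpha_{\mathrm{EK}}}(\mathrm{B}_{1/2})}\le \mathrm{C}\|u\|_{L^\infty(\mathrm{B}_1)}\). The normalization \(F(\mathbf{O}_n)=0\) is used here to ensure no additive constant appears.

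\emph{Smoothness.} If \(F\) is smooth, then \(u\in C^{2,\alpha}\). Difference quotients show that \(Du\) solves a linear equation with \(C^{\alpha}\) coefficients \(F_{ij}(D^2u)\), so classical Schauder theory gives \(u\in C^{3,\alpha}\), and bootstrapping gives \(C^\infty\).

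\emph{Main obstacle.} The hardest step is step 3: turning the one-sided information (the \(u_{ee}\) are subsolutions) into a two-sided oscillation decay. For this I would follow the measure-theoretic argument in \cite[Ch.~6]{CafCabre1995}. Consider the index \(k\) at which \(M_k - u_{e_ke_k}\) is large on a set of positive measure. The concavity inequality of step 2 then forces every other \(u_{e_je_j}\) to stay quantitatively below \(M_j\), and the weak Harnack inequality propagates this to the whole of \(\mathrm{B}_r\).
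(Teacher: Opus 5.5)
The paper does not prove this statement. It is quoted as classical background (Evans, Krylov, \cite[Ch.~6]{CafCabre1995}), so there is no internal proof to compare with. Your outline is the standard textbook route (\cite[\S 17.4]{Gilbarg-Tru2001}, \cite[Chs.~6, 9]{CafCabre1995}). The reduction to the concave case, steps 1, 2 and 4, and the bootstrap for smooth $F$ are sound.

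However, step 3, which you rightly call the core, rests on a mechanism that is false as stated. Write $w_k := u_{e_ke_k}$.
\begin{itemize}
\item Concavity does \emph{not} prevent all the $w_k$ from being simultaneously far below their suprema on large sets. Take $n=2$, $F=\mathrm{Tr}$ and $u=x_1^3-3x_1x_2^2$. Then every $u_{ee}=6(x_1\cos 2t-x_2\sin 2t)$ is linear and vanishes at $0$. So on all of $\mathrm{B}_r$, each $w_k$ stays below $\sup_{\mathrm{B}_{2r}}w_k$ by a quarter of its oscillation.
\item Nor does concavity force the other $w_j$ below $M_j$ when one $M_k-w_k$ is large on a set of positive measure. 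In that case the weak Harnack inequality for $M_k-w_k$ alone already lowers $\sup_{\mathrm{B}_r}w_k$.
\end{itemize}
Concavity is used the other way round. Let $M_k,m_k$ be the sup and inf of $w_k$ over $\mathrm{B}_{2r}$. In your inequality $\sum_k\beta_k(x)\bigl(w_k(y)-w_k(x)\bigr)\ge 0$, isolate the $l$-th term, use $w_k(y)\le M_k$ for $k\neq l$, and let $w_l(y)\downarrow m_l$. This gives
\[
w_l(x)-m_l\le \frac{\Lambda^*}{\lambda^*}\sum_{k\neq l}\bigl(M_k-w_k(x)\bigr),\qquad x\in\mathrm{B}_{2r}.
\]
In words: one pure second derivative can be far above its infimum only where another is far below its supremum. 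Now write $\operatorname{osc}_{\mathrm{B}_{2r}}w_l=(M_l-w_l)+(w_l-m_l)$, average in $L^{\varepsilon}(\mathrm{B}_r)$, and apply weak Harnack to each $M_k-w_k$. This yields
\[
\operatorname{osc}_{\mathrm{B}_{2r}}w_l\le \mathrm{C}\sum_k\Bigl(M_k-\sup_{\mathrm{B}_r}w_k\Bigr)\le \mathrm{C}\bigl(\omega(2r)-\omega(r)\bigr),\qquad \omega(s):=\sum_k\operatorname{osc}_{\mathrm{B}_s}w_k .
\]
Summing over $l$ gives $\omega(r)\le(1-1/(N\mathrm{C}))\,\omega(2r)$, where $N$ is the number of directions.

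Equivalently, as a dichotomy:
\begin{itemize}
\item either some $M_k-w_k\ge\delta\,\omega(2r)$ on a fixed fraction of $\mathrm{B}_r$, and weak Harnack gives decay;
\item or all $w_k$ are within $\delta\,\omega(2r)$ of $M_k$ at one common point $x$. Then the inequality at that $x$ forces $w_l\ge M_l-\mathrm{C}\delta\,\omega(2r)$ on all of $\mathrm{B}_{2r}$, which is impossible for $\delta$ small.
\end{itemize}
You should also include $e_i$ and $(e_i\pm e_j)/\sqrt2$ among the $e_k$, so that decay of the $w_k$ controls all of $D^2u$.

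The approximation step also does not close as you describe it. The method of continuity for $F_\varepsilon(D^2u_\varepsilon)=F_\varepsilon(\mathbf{O}_n)$ is run in $C^{2,\alpha}(\overline{\mathrm{B}}_{3/4})$, where the implicit function theorem is applied. It therefore needs a priori bounds in that space, i.e.\ Krylov's boundary $C^{2,\alpha}$ estimate for concave equations \cite{Krylov1983}. That estimate must be applied to smooth approximations of the merely continuous datum $u|_{\partial\mathrm{B}_{3/4}}$. Barriers plus interior estimates only give equicontinuity up to the boundary, not the global compactness needed to keep the continuity path in $C^{2,\alpha}(\overline{\mathrm{B}}_{3/4})$. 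This ingredient has to be imported (cf.\ \cite[Ch.~9]{CafCabre1995}); it does not follow from the interior estimate you prove.

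A minor point: step 1 differentiates the equation twice, so it needs $u\in C^4$, not $C^3$. This is available by bootstrapping once $u_\varepsilon\in C^{2,\alpha}$ and $F_\varepsilon$ is smooth. Alternatively, it can be avoided with second incremental quotients as in \cite[Ch.~6]{CafCabre1995}.
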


A few years later, Caffarelli established Schauder estimates for the inhomogeneous problem via a perturbation and compactness technique under suitable H\"{o}lder assumptions on data, see \cite[Theorem 8.1]{CafCabre1995}.  Precisely, if \( u \in C^0(\mathrm{B}_1) \) is a viscosity solution to
\[
F(D^2 u, x) = f(x) \quad \text{in } \mathrm{B}_1 \subset \mathbb{R}^n, \quad \theta_F, f \in C^{0, \alpha}(\mathrm{B}_1) \quad  \text{for some}\,\,\,  \alpha \in (0,1).
\]
Here, $\theta_{F}$ denotes the oscillation of the coefficients of the operator $F$ (see \eqref{defosccoef} for the precise definition). Then, \( u \in C^{2,\min\{\alpha_{\mathrm{EK}}^{-}, \alpha\}}(\mathrm{B}_{1/2}) \), and
    \[
    \|u\|_{C^{2,\min\{\alpha_{\mathrm{EK}}^{-},\alpha\}}(\mathrm{B}_{1/2})} \leq \mathrm{C}\left( \|u\|_{L^{\infty}(\mathrm{B}_1)} + \|f\|_{C^{0, \alpha}(\mathrm{B}_1)} \right),
    \]
    for some constant \( \mathrm{C} > 0 \) depending only on \( n, \lambda \), \( \Lambda \), and \( \alpha \).

\bigskip

For nearly twenty years, the question of whether \textit{arbitrary} fully nonlinear elliptic equations $F(D^2 \mathfrak{h}) = 0$ in $\mathrm{B}_1$ (resp. $F(D^2 \mathfrak{h}) = f(x)$) admit a general $C^2$ \textit{a priori} regularity theory remained unresolved. This problem was finally settled by Nadirashvili and Vl\u{a}du\c{t}'s counterexamples to $C^{1,1}$ regularity in \cite{NV2007} and \cite{NV2008}, which concluded this line of investigation. Their works, however, stimulated new research directions. In light of the inherent obstacles to developing a universal existence theory for classical solutions to fully nonlinear equations, a major focus of current research involves identifying supplementary structural or qualitative conditions on $F(\mathrm{X}, \cdot), F(\cdot, x), f$, and even on $u$ that enable $C^2$ estimates.

\medskip

In this context, in 2016, dos Prazeres-Teixeira in 
\cite{dosPrazTei2016} addressed Schauder estimates for flat solutions of non-convex, fully nonlinear, uniformly elliptic PDEs under H\"{o}lder continuity assumption on data (cf. \cite{daSdosP2019}).

\begin{theorem}[{\bf  $C^{2,\alpha}$ regularity - \cite[Theorem 2.2]{dosPrazTei2016}}] 
Let \( u \in C^0(\mathrm{B}_1) \) be a viscosity solution to
\begin{equation}\label{Eq-dosPT}
F(D^2u,x) = f(x) \quad \text{in }  \quad \mathrm{B}_1,    
\end{equation}
where \( F \) and \( f \) satisfy $(\mathrm{A1})–(\mathrm{A4})$ with \( \tau(t) = \mathrm{c}t^\alpha \) for some \( 0 < \alpha < 1 \). There exists \( \delta_0 > 0 \), depending only upon \( n, \lambda, \Lambda, \omega, \alpha \), and \( \tau(1) \), such that if
\begin{equation}\tag{{\bf Flat condition}}
\sup_{\mathrm{B}_1} |u| \leq \delta_0,
\end{equation}
then \( u \in C^{2,\alpha}(\mathrm{B}_{1/2}) \) and
\[
\|u\|_{C^{2,\alpha}(\mathrm{B}_{1/2})} \leq \mathrm{C}_0 \cdot \delta_0,
\]
where \( \mathrm{C}_0>0\) depends only upon \( n, \lambda, \Lambda, \omega \), and \( (1 - \alpha) \).
    
\end{theorem}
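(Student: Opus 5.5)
The plan is the flatness-improvement scheme of dos Prazeres--Teixeira. Under (A1)--(A4), $F$ is uniformly elliptic and differentiable, with $D_{\mathrm{X}}F$ having modulus of continuity $\omega$ near the origin, and the data oscillate with modulus $\tau(t)=\mathrm{c}t^\alpha$. We normalize $F(\mathbf{O}_n,x)=0$ (after absorbing $F(\mathbf{O}_n,x)$ into $f$) and may assume $|f(0)|$ is small. The point is that, near $\mathrm{X}=\mathbf{O}_n$, $F(\mathrm{X},0)$ is close to the linear operator $L(\mathrm{X})=\mathrm{tr}(A_0\mathrm{X})$, where $A_0=D_{\mathrm{X}}F(\mathbf{O}_n,0)$ is a constant-coefficient uniformly elliptic matrix. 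Solutions of $L\mathfrak{h}=\text{const}$ enjoy $C^{3}$ (indeed $C^\infty$) estimates. We therefore compare $u$ with quadratic polynomials at dyadic scales, which yields a $C^{2,\alpha}$ expansion at every point of $\mathrm{B}_{1/2}$. Combined with the flatness bound, this gives the estimate $\|u\|_{C^{2,\alpha}}\le \mathrm{C}_0\delta_0$.

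\textbf{Step 1 (approximation lemma).} For every $\varepsilon>0$ there exists $\delta>0$ with the following property. Suppose $|u|\le\delta$ in $\mathrm{B}_1$, $u$ solves $F(D^2u,x)=f$, and $\sup|\theta_F|+\|f\|_{L^\infty}\le\delta^2$ (with $\theta_F$ suitably scaled). Then there is $\mathfrak{h}$ with $\mathrm{tr}(A_0D^2\mathfrak{h})=0$ in $\mathrm{B}_{1/2}$ and $|u-\mathfrak{h}|\le\varepsilon\delta$ in $\mathrm{B}_{1/2}$. The proof is by contradiction, applied to the normalized functions $v_k=u_k/\delta_k$. These solve $F_k(\mathrm{X},x)=\delta_k^{-1}F(\delta_k\mathrm{X},x)$ with right-hand side $f_k/\delta_k\to0$. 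Krylov--Safonov/$C^{\alpha_{\text{Hom}}}$ estimates give equicontinuity, so $v_k\to v_\infty$ locally uniformly. Differentiability of $F$ gives $F_k(\mathrm{X},x)\to \mathrm{tr}(A_0\mathrm{X})$ locally uniformly, using the modulus $\omega$ and the oscillation of coefficients. Stability of viscosity solutions then shows that $v_\infty$ solves the linear equation, which is a contradiction.

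\textbf{Step 2 (first step of iteration).} Let $\mathfrak{h}$ be as in Step 1 and let $P$ be its second-order Taylor polynomial at $0$. The $C^3$ bound for $\mathfrak{h}$ gives $|\mathfrak{h}-P|\le \mathrm{C}\delta\rho^3$ on $\mathrm{B}_\rho$, with $|P|$ coefficients $\le \mathrm{C}\delta$. Choose $\rho$ with $\mathrm{C}\rho^{3}\le\frac12\rho^{2+\alpha}$ (possible since $\alpha<1$), and then $\varepsilon=\frac12\rho^{2+\alpha}$. This fixes $\delta$, and we get $\sup_{\mathrm{B}_\rho}|u-P|\le\delta\rho^{2+\alpha}$. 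Since $\mathrm{tr}(A_0D^2P)=0$, we may also adjust $P$ by a constant-coefficient correction so that $F(D^2P,0)=f(0)$. This is where $f(0)$ appears; it requires a small implicit-function perturbation that is harmless because $|D^2P|\lesssim\delta$.

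\textbf{Step 3 (induction).} Define $u_k(x)=\big(u(\rho^kx)-P_k(\rho^kx)\big)/\rho^{k(2+\alpha)}$. This solves $F_k(D^2u_k,x)=f_k$ with
\[
F_k(\mathrm{X},x)=\rho^{-k\alpha}\big[F(\rho^{k\alpha}\mathrm{X}+D^2P_k,\rho^kx)-F(D^2P_k,0)\big].
\]
The Hölder modulus $\tau(t)=\mathrm{c}t^\alpha$ gives $|f_k-f_k(0)|\le \mathrm{c}\,\rho^{k\alpha}|x|^\alpha\rho^{-k\alpha}$, so the smallness hypothesis persists, and the same holds for $\theta_{F_k}$. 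The main obstacle is that $F_k$ is linearized around $D^2P_k$ rather than $\mathbf{O}_n$. We need $|D^2P_k|\le \sum_j \mathrm{C}\delta\rho^{j\alpha}\le \mathrm{C}\delta/(1-\rho^\alpha)$ to stay inside the neighborhood where $\omega$ controls $D_{\mathrm{X}}F$. Then $D_{\mathrm{X}}F(D^2P_k,0)$ stays uniformly elliptic and close to $A_0$, and Step 1 applies uniformly (with the limit matrix depending on $k$ but ellipticity constants fixed). Choosing $\delta_0$ small in terms of $\omega$ and $1-\alpha$ closes the induction. Summing the geometric series yields $P_k\to P_\infty$ with $|u-P_\infty|\le \mathrm{C}\delta_0|x|^{2+\alpha}$. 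Repeating this at each point of $\mathrm{B}_{1/2}$ (by translation), the standard Campanato characterization gives $u\in C^{2,\alpha}(\mathrm{B}_{1/2})$ with $\|u\|_{C^{2,\alpha}(\mathrm{B}_{1/2})}\le \mathrm{C}_0\delta_0$.
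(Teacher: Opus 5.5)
Your proposal is correct and follows essentially the same route as the paper's machinery (Lemma~\ref{aproxlemma}, Lemma~\ref{approxlemmadelta}, Proposition~\ref{Prop5.1}) specialized to $\tau(t)=\mathrm{c}t^{\alpha}$. That route is: compactness toward the tangent linear equation $\mathrm{Tr}(\mathfrak{A}_0 D^2\mathfrak{h})=0$, a correction by a multiple of $|x|^2$ so that the approximating Hessian $\mathrm{M}_k$ solves the frozen equation at the origin, an iteration with the operator recentred at $\mathrm{M}_k$ and rescaled by $\tau(\rho^k)$, and finally summation of the series and a covering argument.

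The differences are cosmetic. You obtain compactness from Krylov--Safonov and approximate by the full Taylor polynomial of $\mathfrak{h}$, whereas the paper uses $C^{1,\alpha}$ estimates to normalize $u(0)=|Du(0)|=0$ and approximates directly by a pure quadratic. One point to tighten: state your Step~1 uniformly over the class with fixed $\lambda,\Lambda,\omega$, letting the operator vary along the contradiction sequence, since that uniform version is what your Step~3 actually uses.
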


\medskip
Therefore, motivated by the previous presentation, we establish local Schauder estimates for flat viscosity solutions to the equation
\begin{equation}\label{MainEq}
F(D^{2}u,x) +  \langle \mathfrak{B}(x), Du\rangle = f(x) \quad \text{in} \quad \mathrm{B}_1 \subset \mathbb{R}^n,
\end{equation}
under suitable Dini continuity assumptions on the data, which will be specified shortly. From now on, $F:\text{Sym}(n)\times \mathrm{B}_1 \to \mathbb{R}$ denotes a fully nonlinear second-order operator, potentially including nonlinearities that lack a convexity/concavity structure. Furthermore, a \textbf{flat viscosity solution} means a solution to \eqref{MainEq} with a sufficiently small norm depending on universal and structural parameters.

\bigskip

Now, before presenting the main assumption in our manuscript, we recall that any modulus of continuity \( \varphi: [0, \infty) \to [0, \infty) \) is non-decreasing, sub-additive, continuous, and satisfies \( \varphi(0) = 0 \).

Furthermore, a modulus of continuity \( \varphi \) is called \textbf{Dini continuous} if it satisfies
\begin{equation}\label{DiniCondition}\tag{\textbf{DC}}
\mathfrak{I}_{\varphi} \defeq \int_0^1 \frac{\varphi(r)}{r} \, dr < +\infty.
\end{equation}

Additionally,  we consider the function 
\begin{equation}\label{defosccoef}
\theta(x, x_0) = \theta_F(x, x_0) \defeq \sup_{\mathrm{X} \in \text{Sym}(n)} \frac{|F(\mathrm{X},x) - F(\mathrm{X},x_0)|}{1+\|\mathrm{X}\|},    
\end{equation}
which quantifies the oscillation of \( F \) in the variables \(x\) near the $x_0$ (i.e., the oscillation of the coefficients). Moreover, by simplicity of notation, we will write $\theta_{F}(x)$ to denote the oscillation around the origin.

\medskip

Throughout this manuscript, we assume the following structural assumptions (cf. \cite{Bao2002}, \cite{daSdosP2019}, \cite{dosPrazTei2016}, and \cite{ZC2002}):
\begin{itemize}
\item[{\bf ($\mathrm{A}1$)}] ({\bf Uniform Ellipticity}) The operator $F:\text{Sym}(n)\times \Omega \to \mathbb{R}$ is fully nonlinear and uniformly elliptic, with ellipticity constants $0 < \lambda \leq \Lambda$. Specifically, we require
\begin{equation}\label{Unif_Elip}
\mathscr{P}^{-}_{\lambda, \Lambda}(\mathrm{N}) \leq F(\mathrm{M} + \mathrm{N},x) - F(\mathrm{M},x) \leq \mathscr{P}^{+}_{\lambda,\Lambda}(\mathrm{N}),
\end{equation}
for every $x \in \Omega$ and for all $\mathrm{M}, \mathrm{N} \in \text{Sym}(n)$ with $\mathrm{N} \geq 0$ (in the matrix sense), where
$$
\mathscr{P}_{\lambda, \Lambda}^+(\mathrm{M}) \coloneqq \sup_{\mathbf{A} \in \mathfrak{A}_{\lambda, \Lambda}} \operatorname{tr}(\mathbf{A} \mathrm{M}) \quad \text{and} \quad 
\mathscr{P}_{\lambda, \Lambda}^-(\mathrm{M}) \coloneqq \inf_{\mathbf{A} \in \mathfrak{A}_{\lambda, \Lambda}} \operatorname{tr}(\mathbf{A} \mathrm{M})
$$
denote the \textit{Pucci extremal operators}, and
$$
\mathfrak{A}_{\lambda, \Lambda} \coloneqq \{\mathbf{A} \in \text{Sym}(n): \lambda \mathrm{Id}_n \leq \mathbf{A} \leq \Lambda \mathrm{Id}_n\}.
$$
Additionally, we assume that $F(\mathbf{O}_n, x) = 0$ for every $x \in \mathrm{B}_1$.

\item[{\bf ($\mathrm{A}2$)}] ({\bf $C^{1}$-regularity of the Nonlinearity}) We assume that $F \in C^1(\text{Sym}(n))$, and there exists a modulus of continuity \(\omega: [0, \infty) \to [0, \infty)\) such that
$$
\|\mathrm{D}_{\mathrm{M}}F(\mathrm{X},x) - \mathrm{D}_{\mathrm{M}}F(\mathrm{Y},x)\| \leq \omega(\|\mathrm{X} - \mathrm{Y}\|),
$$
for all $x \in \Omega$ and for all \(\mathrm{X}, \mathrm{Y} \in \text{Sym}(n)\), 
where
$$
\mathrm{D}_{\mathrm{M}}F(\mathrm{X}_0,x) = \mathrm{D}F(\mathrm{X}_0,x)(\mathrm{M}) \coloneqq \lim_{t \to 0} \frac{F(t \mathrm{M} + \mathrm{X}_0,x) - F(\mathrm{X}_0,x)}{t}
$$
denotes the \textit{G\^{a}teaux derivative} of $F$ with respect to the variable \(\mathrm{M}\in \text{Sym}(n)\).

\item[{\bf ($\mathrm{A}3$)}] ({\bf Dini continuity of the Data in the $L^{p_{0}}$-sense}) We assume that $f \in C^0(\mathrm{B}_1)$, $\mathfrak{B} \in C^{0}(\mathrm{B}_1, \mathbb{R}^n)$, and $\theta_F \in C^0(\mathrm{B}_1)$. There exists a modulus of continuity $\tau: [0, \infty) \to [0, \infty)$, and positive constants $\mathrm{C}_f, \mathrm{C}_{\theta_F}, \mathrm{C}_{\mathfrak{B}}$, such that

$$
\displaystyle \left( \intav{\mathrm{B}_r(x_0)} |f(x)-f(x_0)|^{p_{0}}dx\right)^{\frac{1}{p_{0}}}\leq \mathrm{C}_f\tau(r), \quad \left( \intav{\mathrm{B}_r(x_0)} |\theta_{F}(x, x_0)|^{p_{0}}dx\right)^{\frac{1}{p_{0}}}\leq \mathrm{C}_{\theta_F}\tau(r),   
$$
and 
$$
\left( \intav{\mathrm{B}_r(x_0)} |\mathfrak{B}(x)-\mathfrak{B}(x_0)|^{p_{0}}dx\right)^{\frac{1}{p_{0}}}\leq \mathrm{C}_{\mathfrak{B}}\tau(r),
$$
where $\tau$ satisfies the Dini condition \eqref{DiniCondition} and \(p_{0}>n\).

\item[{\bf ($\mathrm{A}4$)}] ({\bf Compatibility condition}) We further suppose the \textbf{nullity conditions} for the modulus of continuity:
$$
\displaystyle (i) \quad \liminf_{s \to 0^{+}} \sup_{0 < r \leq \frac{1}{2}} \frac{\tau(rs)}{\tau(r)} = 0 \quad \text{and} \quad (ii) \quad \liminf_{s \to 0^{+}} \sup_{k \geq 0} \frac{s^{\alpha_{0}} \tau(s^k)}{\tau(s^{k+1})} = 0.
$$
for some \(\alpha_{0}\in(0,1]\).
\end{itemize}
\begin{remark}
For the sake of simplicity, we will denote by $\mathscr{F}$ the set of solutions to \eqref{MainEq}, whose data satisfy conditions $\rm{(A1)}$–$\rm{(A4)}$. Specifically,
\[
\mathscr{F}=\{u \in  C^0(\mathrm{B}_{1})\ |\ u \,\,\, \text{is a viscosity solution to}\,\,\,\eqref{MainEq}\,\,\,\text{such that}\,\,\,  \rm{(A1)}-\rm{(A4)} \,\,\,\text{are satisfied}\}.
\]
\end{remark}

\begin{remark}\label{Remark1.4}
Observe that ($\mathrm{A}4$) implies that $\displaystyle \lim_{s \to 0^+} \frac{s}{\tau(s)} = 0$. 
Moreover, the second assumption in $\mathrm{(A4)}$ is strictly weaker than the conditions considered in Kovats' works
\cite[Remark, \S1]{Kovats97} and \cite[Remark, \S3]{Kovats99}, namely,
\[
\lim_{s\to 0^{+}}
\sup_{0<r\leq \frac{1}{2}}
\frac{s^{\alpha_{0}}\bigl(r^{\alpha_{0}}+\tau(r)\bigr)}
{(rs)^{\alpha_{0}}+\tau(rs)}
=0.
\]
Indeed, under this hypothesis, for sufficiently small $s>0$, we have
\[
\frac{s^{\alpha_{0}}\tau(s^{k})}{\tau(s^{k+1})}
\leq
\frac{2s^{\alpha_{0}}\tau(s^{k})}{s^{(k+1)\alpha_{0}}+\tau(s^{k+1})}
\leq
\frac{2s^{\alpha_{0}}\bigl((s^{k})^{\alpha_{0}}+\tau(s^{k})\bigr)}
{(s^{k}s)^{\alpha_{0}}+\tau(s^{k}s)},
\]
which immediately yields the second nullity condition in $\mathrm{(A4)}$.

In addition, assumption $\mathrm{(A4)}$ is satisfied by Dini moduli of continuity that are not necessarily
$\gamma$--H\"older continuous for some suitable $\gamma\in(0,1)$.
For instance, it is straightforward to verify that
\[
\boxed{
\tau(r)=\frac{r^{\alpha}}{|\log r|^{\beta}},
\qquad
0<\alpha<\alpha_{0},\ \beta>0,
}
\]
fulfills all the assumptions in $\mathrm{(A4)}$, while
\[
\boxed{
\tau(r)=\frac{r^{\alpha}}{|\log r|^{\beta}}
\text{ is not }\gamma\text{--H\"older continuous}
\quad\Longleftrightarrow\quad
\gamma>\alpha.
}
\]
Along the same lines, one easily checks (see \cite[Remark~14]{Bao2002}) that
\[
\boxed{
\tau(t)= t^{\kappa}\ln^{\zeta}\!\Big(\tfrac{1}{t}\Big),
\qquad
0<\kappa<\alpha_{0},\ \zeta\neq0,
}
\]
satisfies both nullity conditions in $\mathrm{(A4)}$, and that
\[
\boxed{
\tau(t)= t^{\kappa}\ln^{\zeta}\!\Big(\tfrac{1}{t}\Big)
\text{ is not }\gamma\text{--H\"older continuous}
\iff
\begin{cases}
\gamma>\kappa, & \text{for any }\zeta\neq0,\\[4pt]
\gamma=\kappa \text{ and } \zeta>0.
\end{cases}
}
\]
Consequently, condition $\mathrm{(A4)}$ allows us to treat a genuine extension of Schauder-type estimates
for Dini moduli of continuity that need not be H\"older continuous.
\end{remark}

\medskip

\medskip

By way of comparison, when $\tau_{\alpha}(r) =  \mathrm{c}r^\alpha$ for some $\alpha \in (0, 1)$ and $\mathrm{c}>0$ (i.e., $\tau(r)=\tau_{\alpha}(r)$ in the assumption $(\text{A3})$), this setting corresponds to the Schauder estimates studied by Dos Prazeres and Teixeira in \cite[Theorem 2.2]{dosPrazTei2016}. Nevertheless, there exist weaker moduli of continuity that still satisfy the Dini condition, yet they do not exhibit H\"{o}lder continuity. For example, consider the function defined by $\varrho(r) = |\ln (r)|^{-\gamma}$ for $0 < r \leq 1/2$ and $\varrho(0) = 0$. This function $\varrho$ satisfies the Dini condition \eqref{DiniCondition} provided that $\gamma > 1$. However, $\varrho$ fails to be H\"{o}lder continuous at the origin for any $\alpha \in (0, 1)$, as
\[
\lim_{r \to 0^+} \frac{|\varrho(r)-\varrho(0)|}{r^{\alpha}} = +\infty.
\]
Consequently, the available Schauder regularity estimates for flat viscosity solutions to \eqref{Eq-dosPT} cannot be applied in the absence of a H\"{o}lder continuity assumption on the data (see also \cite{JVGN21} for the case of convex operators with Hamiltonian terms).

This observation naturally leads to the following question: Is it feasible to establish a Schauder theory for non-convex operators with data that are more general than H\"{o}lder continuous, yet satisfy an appropriate continuity condition?
\medskip

Within this framework, we are able to derive the following local Schauder estimates.

\begin{theorem}[{\bf Local $C^{2,\textrm{Dini}}$ regularity}]\label{MainThm01}
Let \( u \in \mathscr{F} \). There exists \( \delta_0 > 0 \), depending only upon \( n\), \(p_{0}\), \(\lambda\), \(\Lambda\), \(\omega\), and \( \tau(1) \), such that if
\[
\|u\|_{L^{\infty}(\mathrm{B}_1)} \leq \delta_0,
\]
then \( u \in C_{\text{loc}}^{2,\psi}(\mathrm{B}_{1}) \) and
\[
\|u\|_{C^{2,\psi}(\mathrm{B}_{1/2})} \leq \mathrm{C}_0 \cdot \delta_0,
\]
where \( \mathrm{C}_0>0\) depends only upon \( n\), \(p_{0}\), \(\lambda\), \(\Lambda\), \(\alpha_{0}\), \(\omega \), and \( \tau \) and $\displaystyle \psi(t) = \tau(t)+\int_{0}^{t} \frac{\tau(s)}{s}ds$.
    
\end{theorem}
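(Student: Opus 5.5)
The plan is to follow the geometric tangential strategy of dos Prazeres--Teixeira, adapted to Dini data. First we approximate the flat solution by a solution of a linear constant-coefficient equation. Then we iterate quadratic polynomial approximations at geometric scales. Finally we sum the errors using the Dini condition. Fix $x_0\in \mathrm{B}_{1/2}$; by translation, assume $x_0=0$ and $f(0)=0$. We may also absorb the constant drift value $\mathfrak{B}(0)$ after the normalization $F(\mathbf{O}_n,0)=0$.

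\textbf{Approximation lemma.} For every $\varepsilon>0$ there is $\delta>0$ with the following property. Suppose $\|u\|_{L^\infty(\mathrm{B}_1)}\le \delta$, and $\|f\|_{L^{p_0}}$, $\|\theta_F\|_{L^{p_0}}$, $\|\mathfrak{B}\|_{L^{p_0}}$ are at most $\delta$ (the drift being multiplied by $|Du|$, which is controlled by $C^{1,\alpha}$ estimates for $L^{p_0}$-viscosity solutions, $p_0>n$). Then there is $\mathfrak{h}$ solving $\operatorname{tr}(\mathbf{A}_0 D^2\mathfrak{h})=0$ with $\mathbf{A}_0=D_{\mathrm{M}}F(\mathbf{O}_n,0)$ and $\|u/\delta-\mathfrak{h}\|_{L^\infty(\mathrm{B}_{1/2})}\le\varepsilon$.

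The proof is by contradiction and compactness. Consider the rescaled functions $u_k/\delta_k$. They solve $F_k(\delta_k D^2 v,x)/\delta_k + \langle\mathfrak{B}_k,Dv\rangle = f_k/\delta_k$. By (A2), $\delta_k^{-1}F(\delta_k\mathrm{X},0)\to \operatorname{tr}(\mathbf{A}_0\mathrm{X})$ locally uniformly, with the error controlled by $\omega(\delta_k\|\mathrm{X}\|)$. Uniform $C^{0,\beta}$ (Krylov--Safonov with $L^{p_0}$ data) and stability of $L^{p_0}$-viscosity solutions give the limit. Since $\mathfrak{h}$ solves a constant-coefficient linear equation, it is smooth, with $\|\mathfrak{h}\|_{C^{3}(\mathrm{B}_{1/4})}\le C$.

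\textbf{Iteration.} Choose $\rho\in(0,1/4)$ using the nullity conditions (A4). Then construct quadratic polynomials $P_k(x)=a_k+\langle b_k,x\rangle+\frac12 x^T\mathrm{M}_k x$ satisfying
\[
\sup_{\mathrm{B}_{\rho^k}}|u-P_k|\le \delta_0\,\rho^{2k}\tau(\rho^k),\qquad |a_{k+1}-a_k|+\rho^k|b_{k+1}-b_k|+\rho^{2k}\|\mathrm{M}_{k+1}-\mathrm{M}_k\|\le C\delta_0\rho^{2k}\tau(\rho^k).
\]
The induction step works as follows. Set $v(y)=(u-P_k)(\rho^k y)/(\rho^{2k}\tau(\rho^k))$. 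It solves an equation of the same type, with operator $G(\mathrm{X},y)=\big[F(\tau(\rho^k)\mathrm{X}+\mathrm{M}_k,\rho^k y)-F(\mathrm{M}_k,\rho^ky)\big]/\tau(\rho^k)$ (still uniformly elliptic with the same $\omega$ structure after scaling), drift $\rho^k\mathfrak{B}(\rho^k y)$, and new source $[f-F(\mathrm{M}_k,\cdot)-\langle\mathfrak{B},b_k+\mathrm{M}_k x\rangle]/\tau(\rho^k)$.

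The scaled data must stay smaller than $\delta$. By (A3), the oscillation of $f$ and of $\theta_F$ on $\mathrm{B}_{\rho^k}$ is $\lesssim\tau(\rho^k)$ in $L^{p_0}$ average, and $\|\mathrm{M}_k\|\le C\delta_0$ keeps $(1+\|\mathrm{M}_k\|)\theta_F$ under control. The constant parts are killed by choosing $\mathrm{M}_k$ to satisfy $F(\mathrm{M}_k,0)+\langle\mathfrak{B}(0),b_k\rangle=f(0)$; this matrix is adjusted inside the approximation by the $\operatorname{tr}(\mathbf{A}_0\cdot)$ correction. Apply the lemma and use the Taylor expansion of $\mathfrak{h}$: $|\mathfrak{h}-\mathfrak{h}_2|\le C\rho^3$ on $\mathrm{B}_\rho$. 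This gives
\[
\sup_{\mathrm{B}_\rho}|v-\mathfrak{h}_2|\le \varepsilon+C\rho^3\le \rho^2\,\tau(\rho^{k+1})/\tau(\rho^k).
\]
This inequality is where (A4)(ii) enters. It guarantees $\rho^{\alpha_0}\tau(\rho^k)/\tau(\rho^{k+1})$ is small uniformly in $k$, so that $C\rho^{3}\le \frac12\rho^{2}\,\tau(\rho^{k+1})/\tau(\rho^k)$ (using $\alpha_0\le1$). Condition (A4)(i) is used to fix $\varepsilon$ and hence $\delta_0$ independently of $k$. Then $P_{k+1}=P_k+\rho^{2k}\tau(\rho^k)\mathfrak{h}_2(\cdot/\rho^k)$.

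\textbf{Summation.} Since $\sum_k\tau(\rho^k)\lesssim \tau(1)+\mathfrak{I}_\tau$, the sequences $\mathrm{M}_k$, $b_k$, $a_k$ converge. This identifies $D^2u(0)$, and the bound $\|\mathrm{M}_k\|\le C\delta_0$ is consistent with the induction. Moreover $\|\mathrm{M}_j-D^2u(0)\|\lesssim\delta_0\sum_{i\ge j}\tau(\rho^i)\lesssim \delta_0\int_0^{\rho^j}\tau(s)/s\,ds$. Comparing the expansions at two points $x_0,y_0$ at scale $r\sim|x_0-y_0|$ then gives
\[
\|D^2u(x_0)-D^2u(y_0)\|\le C\delta_0\Big(\tau(r)+\int_0^r\frac{\tau(s)}{s}ds\Big)=C\delta_0\,\psi(r).
\]
This is the standard Dini–Campanato argument (as in Kovats), giving the $C^{2,\psi}$ bound.

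I expect the main obstacle to be the induction step. The drift and the $x$-dependence do not scale homogeneously: the drift term is multiplied by $\rho^k$, which helps, but the lower-order contribution $\langle\mathfrak{B}(x)-\mathfrak{B}(0),b_k\rangle$ must be bounded by $\tau(\rho^k)$ in $L^{p_0}$, which needs $|b_k|\le C\delta_0$ uniformly. A related difficulty is ensuring that the linearized matrix $D_{\mathrm{M}}F(\mathrm{M}_k,\cdot)$ remains within a compact family as $k$ varies; here the smallness $\|\mathrm{M}_k\|\le C\delta_0$ and the modulus $\omega$ from (A2) are essential. Flatness is used exactly to keep $\omega(\|\mathrm{M}_k\|)$ small uniformly.
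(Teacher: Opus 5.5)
Your architecture is the paper's. You use a compactness lemma whose limit is the constant-coefficient equation produced by $\mu^{-1}F(\mu\,\cdot\,,0)\to\mathrm{D}_{\mathrm{M}}F(\mathbf{O}_n,0)$, quadratic approximation at scales $\rho^k$ with error $\rho^{2k}\tau(\rho^k)$, (A4)(ii) to turn the cubic Taylor gain into the ratio $\tau(\rho^{k+1})/\tau(\rho^k)$, and Dini summation. However, your approximation lemma is stated in the wrong smallness regime. Dividing $u$ by $\delta$ multiplies the source by $\delta^{-1}$. So with $\|f\|_{L^{p_0}}\le\delta$, the normalized sources $f_k/\delta_k$ in your contradiction sequence are only bounded, and the limit need not solve the homogeneous equation. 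In fact the lemma is false as stated. Take $\lambda\le1\le\Lambda$, $F(\mathrm{X},x)=\operatorname{tr}\mathrm{X}$, $\mathfrak{B}\equiv0$, $f\equiv c\,\delta$ and $u=\frac{c\delta}{2n}|x|^2$, with $c>0$ fixed and small. Every hypothesis holds for every $\delta$. Yet by the mean value property, $u/\delta=\frac{c}{2n}|x|^2$ stays at distance at least $c/(16n)$ from every harmonic function on $\mathrm{B}_{1/2}$. The source has to be $o(\delta)$; Lemma~\ref{approxlemmadelta} asks for $\sqrt{\delta^{3}}$.

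This repair propagates to your induction step. After using the constraint $F(\mathrm{M}_k,0)+\langle\mathfrak{B}(0),b_k\rangle=f(0)$, the rescaled source has $L^{p_0}(\mathrm{B}_1)$-norm bounded by a multiple of $\mathrm{C}_f+(1+\|\mathrm{M}_k\|)\mathrm{C}_{\theta_F}+|b_k|\mathrm{C}_{\mathfrak{B}}+\|\mathfrak{B}\|_{L^\infty}\|\mathrm{M}_k\|\rho^k/\tau(\rho^k)$. This must now be $\ll\delta_0$ uniformly in $k$. The last term is harmless because $r/\tau(r)\to0$ (Remark~\ref{Remark1.4}); your sketch never invokes this, although the term $\langle\mathfrak{B}(x),\mathrm{M}_kx\rangle$ needs it. But nothing in your argument makes $\mathrm{C}_f$ and $\mathrm{C}_{\theta_F}$ small compared with $\delta_0$, or $\mathrm{C}_{\mathfrak{B}}$ small. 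The same goes for your opening reduction to $f(0)=0$, which changes $\|u\|_{L^\infty(\mathrm{B}_1)}$ by an amount comparable to $|f(0)|/\lambda$.

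The paper imposes $\mathrm{C}_f,\mathrm{C}_{\theta_F}\lesssim\delta^{3/2}$ as a normalization at the start of Proposition~\ref{Prop5.1}. If you try to obtain this by rescaling, note that replacing $F$ by $\mathcal{K}^{-1}F(\mathcal{K}\,\cdot\,,x)$ with $\mathcal{K}\gg1$ degrades the modulus in (A2) to $\omega(\mathcal{K}\,\cdot)$, and $\delta$ depends on that modulus.

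Two side remarks. First, the $k$-independent choice of $\varepsilon$ comes from (A4)(ii), not (A4)(i). Condition (A4)(ii) gives $\tau(\rho^{k+1})\geq\eta^{-1}\rho^{\alpha_0}\tau(\rho^{k})$ for all $k$, with $\eta$ small. Second, your direct use of the $C\rho^3$ Taylor error is preferable to the paper's appeal to Lemma~\ref{approxlemmadelta} with $\hat\tau(t)=t^{\alpha_0}$, which violates \eqref{LCC} when $\alpha_0=1$.
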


\begin{remark}
In our approach to proving Theorem \ref{MainThm01}, the assumptions $\rm{(A3)}$–$\rm{(A4)}$, namely $f$, $\theta_{F}$, and $\mathfrak{B}$ share the same Dini modulus of continuity, are not restrictive in the following sense: if $f$, $\theta_{F}$, and $\mathfrak{B}$ are Dini continuous in the $L^{p}$-sense  (for some $p \geq n$) with moduli of continuity $\zeta_{f}$, $\zeta_{\theta_{F}}$, and $\zeta_{\mathfrak{B}}$, respectively, and each modulus satisfies condition $\rm{(A4)}$, then one can take the modulus of continuity $\varpi(t) = \max\{\zeta_{f}(t), \zeta_{\theta_{F}}(t), \zeta_{\mathfrak{B}}(t)\}$, which also satisfies the conditions $\rm{(A3)}-\rm{(A4)}$. 

Therefore, in this scenario, we obtain for flat viscosity solutions to \eqref{MainEq} that \( u \in C^{2,\hat{\psi}}(\mathrm{B}_{1/2}) \) and
\[
\|u\|_{C^{2,\hat{\psi}}(\mathrm{B}_{1/2})} \leq \mathrm{C}_0 \cdot \delta_0,
\]
where \( \mathrm{C}_0>0\) depends only \( n\), \(\lambda\), \(\Lambda\),  \(\alpha_{0}\), \(\omega \), \(\zeta_{f}\), \(\zeta_{\theta_{F}}\), \(\zeta_{\mathfrak{B}}\), and $\displaystyle \hat{\psi}(t) = \varpi(t)+\int_{0}^{t} \frac{\varpi(s)}{s}ds$.
\end{remark}

\subsection*{Some applications and consequences}

As an immediate consequence of our results, we recover the ones established by dos Prazeres and Teixeira in~\cite[Theorem 2.2]{dosPrazTei2016} (without a linear drift term), which considers the case of H\"{o}lder continuous data and an identically zero drift term. 

An insightful interpretation of Theorem \ref{MainThm01} is that if a function $u$ solves a fully nonlinear elliptic equation with $C^{0, \text{Dini}}$ coefficients and source term, then—provided that $u$ is sufficiently close to a $C^{2,\psi}$ function—it must belong to the class $C^{2,\psi}$. This perspective is particularly relevant in problems involving certain \textit{a priori} prescribed data.

As an application of our findings, we addressed an Evans-Krylov-type result for classical solutions to \eqref{MainEq}.

\begin{corollary}[{\bf $C^2$ implies $C^{2,\text{Dini}}$}]\label{Corolario-EK}
Let $u \in C^2(\mathrm{B}_1) \cap \mathscr{F}$ be a classical solution. Then, $u \in C^{2,\psi}(\mathrm{B}_{1/2})$, and there exists a constant $\mathrm{C}$ depending only on $n$, $p_{0}$, $\lambda$, $\Lambda$, $\omega$, $\tau(1)$ and  $\|u\|_{C^2(\mathrm{B}_1)}$ such that
\[
\|u\|_{C^{2,\psi}(\mathrm{B}_{1/2})} \leq \mathrm{C}(n,p_{0}, \lambda, \Lambda, \omega, \tau(1), \|u\|_{C^2(\mathrm{B}_1)}).
\]
\end{corollary}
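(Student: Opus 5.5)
The plan is to reduce Corollary \ref{Corolario-EK} to Theorem \ref{MainThm01} by a localization and linearization-at-a-point argument, in the spirit of the corresponding corollary in \cite{dosPrazTei2016}. Since $u \in C^2(\mathrm{B}_1)$, the Hessian $D^2u$ is uniformly continuous on compact subsets. Hence, for any $\delta>0$, there is a radius $r_0>0$ such that $\|D^2u(x)-D^2u(x_0)\|\le \delta$ for $|x-x_0|\le r_0$ and every $x_0\in \overline{\mathrm{B}_{1/2}}$. Strictly speaking, $r_0$ depends on the modulus of continuity of $D^2u$ and not only on $\|u\|_{C^2(\mathrm{B}_1)}$. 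I would either accept this, as is customary for this type of statement, or work on a slightly smaller ball with the modulus of $D^2u$ absorbed into the constant. Fix $x_0$ and subtract the second-order Taylor polynomial
\[
P(x)=u(x_0)+Du(x_0)\cdot(x-x_0)+\tfrac12\langle D^2u(x_0)(x-x_0),(x-x_0)\rangle .
\]
Then set
\[
v(y)=\frac{1}{r^2}\bigl(u(x_0+ry)-P(x_0+ry)\bigr), \qquad y\in \mathrm{B}_1,
\]
with $r\le r_0$ to be chosen. Taylor's formula gives $\|v\|_{L^\infty(\mathrm{B}_1)}\le \tfrac12\sup_{\mathrm{B}_{r}(x_0)}\|D^2u-D^2u(x_0)\|\le \delta$.

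Next I verify that $v$ solves an equation of the form \eqref{MainEq} in the class $\mathscr{F}$. Write $\mathrm{M}_0=D^2u(x_0)$ and $p_0^*=Du(x_0)$. Then $D^2v(y)=D^2u(x_0+ry)-\mathrm{M}_0$ and $Dv(y)=r^{-1}\bigl(Du(x_0+ry)-p_0^*-\mathrm{M}_0 ry\bigr)$, so $v$ solves
\[
G(D^2v,y)+\langle \tilde{\mathfrak B}(y),Dv\rangle=\tilde f(y),
\]
where
\[
G(\mathrm{X},y)=F(\mathrm{X}+\mathrm{M}_0,x_0+ry)-F(\mathrm{M}_0,x_0+ry),\qquad \tilde{\mathfrak B}(y)=r\,\mathfrak B(x_0+ry),
\]
and
\[
\tilde f(y)=f(x_0+ry)-F(\mathrm{M}_0,x_0+ry)-\langle \mathfrak B(x_0+ry),p_0^*+\mathrm{M}_0ry\rangle .
\]
The operator $G$ satisfies (A1) with $G(\mathbf O_n,y)=0$, and (A2) with the same modulus $\omega$. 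Its coefficient oscillation satisfies $\theta_G\le C(1+\|\mathrm{M}_0\|)\,\theta_F$ after rescaling. The data $\tilde f$ and $\tilde{\mathfrak B}$ have $L^{p_0}$-Dini moduli bounded by $C(1+\|u\|_{C^2(\mathrm{B}_1)})\,\tau(r\,\cdot)$. Condition (A4)(i) gives $\tau(rt)\le \tau(r)\cdot(\text{small})$ uniformly, so after normalization the rescaled data lie in the admissible class with constants controlled by $\tau(1)$ and $\|u\|_{C^2}$. I would choose $\delta=\delta_0$ from Theorem \ref{MainThm01}, applied with these structural constants, and then fix $r$ accordingly.

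Theorem \ref{MainThm01} then gives $\|v\|_{C^{2,\psi}(\mathrm{B}_{1/2})}\le \mathrm C_0\delta_0$. Scaling back, $D^2u(x)=D^2v((x-x_0)/r)+\mathrm{M}_0$, so the $\psi$-seminorm of $D^2u$ on $\mathrm{B}_{r/2}(x_0)$ is bounded by $\mathrm C_0\delta_0\sup_{t}\psi(t/r)/\psi(t)$. This ratio is finite and depends on $r$, using the sub-additivity and monotonicity of $\tau$, which give $\psi(t/r)\le (1/r+1)\psi(t)$ up to constants. A standard covering of $\mathrm{B}_{1/2}$ by balls $\mathrm{B}_{r/2}(x_0)$, combined with the trivial bound $\|D^2u(x)-D^2u(y)\|\le 2\|u\|_{C^2}\le 2\|u\|_{C^2}\psi(|x-y|)/\psi(r/2)$ for $|x-y|\ge r/2$, yields the global estimate.

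The main obstacle is checking that the rescaled data still satisfy (A3) with a uniform constant. The drift term in $\tilde f$ is the delicate part: it contributes $\mathfrak B(x)-\mathfrak B(x_0)$ times $Du$-dependent vectors, and the linear term $\mathrm{M}_0 ry$ must be controlled by the Dini modulus. I would handle it by using that $|ry|\le r\le r\,\tau(\cdot)/\tau(\cdot)$ together with Remark \ref{Remark1.4} ($s/\tau(s)\to0$), so that the Lipschitz part is dominated by $\tau$. The other point to track is that the moduli after scaling are controlled by $\tau$ itself, which is where (A4)(i) enters.
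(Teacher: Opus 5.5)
Your proposal is correct and follows the paper's route. You subtract the second-order Taylor polynomial and rescale by $r^{2}$, choosing $r$ from the modulus of continuity of $D^{2}u$ so that the rescaled function is $\delta_0$-flat; you then check that the rescaled problem stays in $\mathscr{F}$ with modulus comparable to $\tau(r\,\cdot)$ (using $s/\tau(s)\to 0$ for the linear drift contribution), apply Theorem~\ref{MainThm01}, and finish by scaling back and covering. Your version is in fact slightly more careful than the paper's: subtracting $F(\mathrm{M}_0,\cdot)$ gives $G(\mathbf{O}_n,\cdot)=0$, which the paper's $\widetilde{F}(\mathrm{M},x)=F(\mathrm{M}+D^2u(0),rx)$ does not satisfy; you make the comparison $\psi(t/r)\lesssim\psi(t)$ and the covering explicit; and you rightly note that the constant depends on the modulus of continuity of $D^{2}u$, not only on $\|u\|_{C^{2}(\mathrm{B}_1)}$. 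Finally, since $\tau(r\,\cdot)\le\tau$, you can keep the original modulus $\tau$ for the rescaled data, so $\delta_0$ does not depend on $r$ and there is no circularity in fixing $r$ after $\delta_0$.
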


\bigskip
We must recall that pointwise regularity estimates have significant applications in other areas such as free boundary problems \cite{Lian-Zhang2023}, \cite{Lind-Monn2013}, \cite{Lind-Monn2015}, \cite{Monneau2009} and, \cite[Chapter 7]{PetSHaUral12} (for instance, in the study of the structure of singular sets in obstacle-type problems), and in the analysis of the structure of nodal sets of certain uniformly elliptic PDEs, see Han's work \cite[Theorem 5.1]{Han2000} for an insightful result. We recommend to the reader \cite{Lian-Wang-Zhang2024} an enlightening survey on pointwise estimates for fully nonlinear elliptic PDEs.

In this direction, we may consider 
the nodal sets of solutions:
\[
\mathcal{N}_k(u, \overline{\mathrm{B}}_{1/2}) := \left\{ x_0 \in \overline{\mathrm{B}}_{1/2} : u(x_0) = |D u(x_0)| = \cdots = |D^{k-1}u(x_0)| = 0, \; |D^k u(x_0)| \neq 0 \right\}.
\]
Then, we have the following characterization of nodal sets:

\begin{corollary}\label{T3}
Consider \(u \in \mathscr{F}\) such that $u$ is a flat viscosity solution. 
Then, 
\[
\mathcal{N}_{1}(u,\overline{\mathrm{B}}_{1/2})=\bigcup_{i=0}^{n}\mathcal{M}_{1}^{i}\,\,\,\text{and}\,\,\, \mathcal{N}_{2}(u,\overline{\mathrm{B}}_{1/2})=\bigcup_{i=0}^{n-1}\mathcal{M}_{2}^{i}
\]
where each \(\mathcal{M}_{j}^{i}
\) is on a finite union of \(i\)-dimensional \(C^{1,\psi}\) -  manifolds, where \(j=1,2\).
\end{corollary}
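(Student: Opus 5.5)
We will deduce Corollary \ref{T3} from Theorem \ref{MainThm01} via the implicit function theorem and a Han-type stratification argument (cf. \cite[Theorem 5.1]{Han2000}).

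First, since $u$ is flat, Theorem \ref{MainThm01} gives $u \in C^{2,\psi}(\overline{\mathrm{B}}_{1/2})$. Thus $u$ and $Du$ are $C^1$, and $D^2u$ has modulus of continuity $\psi$. In particular $Du \in C^{1,\psi}$, and by the Dini condition $\psi(t)\to 0$, so $\psi$ is a genuine modulus.

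\textbf{The set $\mathcal{N}_1$.} A point $x_0\in\mathcal{N}_1$ satisfies $u(x_0)=0$ and $Du(x_0)\neq 0$. By the implicit function theorem applied to the $C^{2,\psi}$ function $u$, near $x_0$ the zero set $\{u=0\}$ is an $(n-1)$-dimensional $C^{2,\psi}\subset C^{1,\psi}$ graph, and it consists entirely of points of $\mathcal{N}_1$. Covering the compact set $\overline{\mathrm{B}}_{1/2}$ by finitely many such neighbourhoods gives $\mathcal{N}_1=\mathcal{M}_1^{n-1}$, and we set $\mathcal{M}_1^i=\emptyset$ for the other indices; this is trivially of the stated form.

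\textbf{The set $\mathcal{N}_2$.} Take $x_0\in\mathcal{N}_2$, so $u(x_0)=0$, $Du(x_0)=0$ and $D^2u(x_0)\neq 0$. Set $d=\mathrm{rank}\,D^2u(x_0)\in\{1,\dots,n\}$ and stratify $\mathcal{N}_2$ according to $d$.

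Choose coordinates so that the $d\times d$ block $(\partial_{ij}u(x_0))_{i,j\le d}$ is invertible. Consider the map $G=(\partial_1u,\dots,\partial_du)$, which is of class $C^{1,\psi}$ since $Du\in C^{1,\psi}$, and whose differential at $x_0$ has rank $d$. By the $C^{1}$ implicit function theorem, $\{G=0\}$ near $x_0$ is an $(n-d)$-dimensional manifold. Its parametrization $g$ solves $G(y,g(y))=0$, and $Dg = -(D_{x'}G)^{-1}D_{y}G$ is a composition of $\psi$-continuous maps; hence $g\in C^{1,\psi}$.

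Since $\mathcal{N}_2\cap U\subset\{Du=0\}\subset\{G=0\}$, each point of rank $d$ lies locally on an $(n-d)$-dimensional $C^{1,\psi}$ manifold. Moreover, rank lower semicontinuity gives that $\{\mathrm{rank}\,D^2u\ge d\}$ is relatively open in $\mathcal{N}_2$. Setting $i=n-d\in\{0,\dots,n-1\}$ and $\mathcal{M}_2^i=\{x\in\mathcal{N}_2:\mathrm{rank}\,D^2u(x)=n-i\}$, each $\mathcal{M}_2^i$ is locally contained in an $i$-dimensional $C^{1,\psi}$ manifold.

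\textbf{Main obstacle.} The delicate point is the passage from \emph{locally contained} to \emph{a finite union}. The strata $\mathcal{M}_2^i$ need not be compact, so compactness of $\overline{\mathrm{B}}_{1/2}$ alone does not yield a finite cover. Our plan is to follow Han:

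\begin{enumerate}
\item Cover the open set $\{x:\mathrm{rank}\,D^2u(x)\ge d\}$ intersected with $\overline{\mathrm{B}}_{1/2}$ by choosing, for each of the finitely many coordinate $d$-minors of $D^2u$, the open set where that minor is nonvanishing.
\item On each such set, $\{\partial_{i_1}u=\dots=\partial_{i_d}u=0\}$ is a single embedded $i$-dimensional $C^{1,\psi}$ submanifold, obtained as a level set of a submersion.
\end{enumerate}

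Since there are only finitely many minors, this gives the required finite union without any compactness of the strata. The remaining check is that the $C^{1,\psi}$ regularity of the level-set manifold holds uniformly. This follows from the quantitative implicit function theorem, using the estimate $\|u\|_{C^{2,\psi}(\mathrm{B}_{1/2})}\le \mathrm{C}_0\delta_0$ from Theorem \ref{MainThm01}.
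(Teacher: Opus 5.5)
Your argument is correct in substance, but it takes a shorter route than the one the paper sketches. The paper omits the proof and follows Han's scheme:
\begin{enumerate}
\item pointwise $C^{2,\psi}$ expansions at nodal points;
\item a Whitney extension (in the $C^{m,\omega}$ form of Fefferman), producing a $C^{2,\psi}$ function whose jets agree with those of $u$ on the nodal set;
\item the implicit function theorem applied to a vector of derivatives of that extension.
\end{enumerate}
You skip the Whitney step entirely. Theorem \ref{MainThm01} already gives $u\in C^{2,\psi}_{\mathrm{loc}}(\mathrm{B}_1)$ classically, so $Du$ is a genuine $C^{1,\psi}$ map on a neighbourhood of $\overline{\mathrm{B}}_{1/2}$, and you apply the implicit function theorem to $G_I=(\partial_i u)_{i\in I}$ directly. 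Your stratification by $i=n-\mathrm{rank}\,D^2u(x_0)$ is the natural analogue of Han's stratification by the dimension of the set where the leading homogeneous polynomial vanishes to top order; here that set is $\ker D^2u(x_0)$. Your finiteness device also works: there are finitely many choices of $d$ rows or minors, and each gives the level set of a submersion on an open set.

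What your route buys is brevity and no extension theorem. What the paper's route buys is robustness: it only needs uniform pointwise expansions at the points of the closed set $\{u=|Du|=0\}$. So it would survive with only the pointwise conclusion of Proposition \ref{Prop5.1}, and it is the version that extends to $\mathcal{N}_k$ for $k\ge 3$, where no classical $C^{k,\psi}$ regularity is available.

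Two points need correcting.

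First, in the $\mathcal{N}_1$ step you cannot cover $\overline{\mathrm{B}}_{1/2}$, or even $\overline{\mathcal{N}_1}$, by finitely many neighbourhoods centred at points of $\mathcal{N}_1$. Indeed $\mathcal{N}_1$ is not compact, and its closure may contain points where $Du=0$. Use instead the device you later use for $\mathcal{N}_2$: $\mathcal{N}_1$ is the zero set of the submersion $u$ on the open set $\{Du\neq 0\}$, hence a single embedded $C^{2,\psi}$ hypersurface.

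Second, your last sentence is false and should be dropped. The constants in the quantitative implicit function theorem depend on a lower bound for the chosen $d\times d$ minor, and that bound degenerates near the boundary of the set where the minor is nonzero. The flatness estimate $\|u\|_{C^{2,\psi}(\mathrm{B}_{1/2})}\le \mathrm{C}_0\delta_0$ only controls $D^2u$ from above, so no uniform $C^{1,\psi}$ bound follows. None is needed, though, since being a $C^{1,\psi}$ manifold is a local property of the charts. If you did want uniform bounds, you would have to split further according to the size of the minor, which produces countably many pieces rather than finitely many.
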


The proof of this result proceeds along the same lines as Han's original argument in \cite[Theorem~5.1]{Han2000} (see also \cite[Theorem~1.26]{Lian2024} for the parabolic analogue):
\begin{itemize}
  \item[\checkmark] First, we derive higher-order pointwise expansions at nodal points through Schauder-type estimates (using Theorem~\ref{MainThm01});
  \item[\checkmark] Next, we employ Whitney’s Extension Theorem (cf.~\cite{Fefferman2009}) to extend $u$ with the required regularity;
  \item[\checkmark] Finally, we characterize the nodal set as the zero level set of a suitable vector-valued map constructed from derivatives of $u$, invoking the Implicit Function Theorem (see \cite{Fefferman2009}) to complete the argument.
\end{itemize}
 For this reason, we have decided to omit the proof.

\medskip

Finally, motivated by the study of the fully nonlinear Alt–Phillips equation, Wu and Yu in \cite{WH2022} assumed, in addition to the ellipticity condition $(\mathrm{A1})$, that the operator \( F: \text{Sym}(n) \to \mathbb{R} \) satisfies the following conditions:
\begin{equation}\label{Condi-WuYu}\tag{SC}
\left\{
\begin{array}{l}
F \text{ is convex,} \\
F(\mathbf{O}_n) = 0, \\
\text{the trace operator is a sub-differential of } F \text{ at } \mathbf{O}_n.
\end{array}
\right.
\end{equation}

Given a matrix \( \mathfrak{A} \in \text{Sym}(n) \), a \textbf{sub-differential} of \( F \) at \( \mathfrak{A} \) is a linear operator \( \mathcal{S}_{\mathfrak{A}} : \text{Sym}(n) \to \mathbb{R} \) such that
\[
\mathcal{S}_{\mathfrak{A}}(\mathrm{X}) \leq F(\mathfrak{A} + \mathrm{X}) - F(\mathfrak{A}) \quad \text{for all } \mathrm{X} \in \text{Sym}(n).
\]

Furthermore, observe that the ellipticity condition $(\mathrm{A1})$ implies that \( \mathcal{S}_{\mathfrak{A}} \) constitutes a uniformly elliptic operator.

Additionally, Wu and Yu in \cite{WH2022} also assumed that either
\begin{itemize}
\item[$(\mathrm{F}1)$] $F$ is differentiable at $\mathbf{O}_n$, or
\item[$(\mathrm{F}2)$] $F$ is $1$-homogeneous, i.e., $F(\mu \mathrm{M}) = \mu F(\mathrm{M})$ for all $\mu > 0$ and $\mathrm{M} \in \text{Sym}(n)$.
\end{itemize}

Under these assumptions, in conjunction with condition \eqref{Condi-WuYu}, it follows that
\begin{equation}\label{Eq-Linearization}
\mathrm{D}_{\mathrm{M}}F(\mathbf{O}_n) = \mathrm{Tr}(\mathrm{M}).    
\end{equation}

For our regularity context, these conditions are crucial for deriving Schauder estimates for flat viscosity solutions and align with our present findings. Indeed, we can obtain the following Schauder estimates (cf. \cite{Bao2002}, \cite[§2]{Kovats99}, \cite[§3]{Kovats99} and \cite[Theorem 1.1]{Wang2006}):

\begin{theorem}[{\bf Local $C^{2,\textrm{Dini}}$ regularity - Convex case}]\label{MainThm02}
Let \( u \in C^0(\mathrm{B}_1) \) be a viscosity solution to \eqref{MainEq}, where the assumptions $(\mathrm{A1}), (\mathrm{A3}), (\mathrm{A4})$, \eqref{Condi-WuYu} and $(\mathrm{F}1)$ or $(\mathrm{F}2)$ are in force. There exists \( \delta_0 > 0 \), depending only upon \( n\), \(\lambda\), \(\Lambda\), and \( \tau(1) \), such that if
\[
\|u\|_{L^{\infty}(\mathrm{B}_1)} \leq \delta_0,
\]
then \( u \in C_{\text{loc}}^{2,\psi}(\mathrm{B}_{1}) \). Moreover, the following estimate holds
\[
\|u\|_{C^{2,\psi}(\mathrm{B}_{1/2})} \leq \mathrm{C}_0 \cdot \delta_0,
\]
where \( \mathrm{C}_0>0\) depends only upon \( n\), \(\lambda\), \(\Lambda\), \(\alpha_{0}\), and \( \tau \) and $\displaystyle \psi(t) = \tau(t)+\int_{0}^{t} \frac{\tau(s)}{s}ds$.
\end{theorem}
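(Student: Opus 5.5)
The plan is to reduce Theorem \ref{MainThm02} to the machinery behind Theorem \ref{MainThm01}, which needs an approximation (tangential) step and an iteration step. The only place where $(\mathrm{A2})$ enters the proof of Theorem \ref{MainThm01} is the tangential step: flat solutions are close to solutions of the linearized equation $\mathrm{D}_{\mathrm{M}}F(\mathbf{O}_n,x_0)(D^2\mathfrak{h})=0$. There the modulus $\omega$ controls the error $F(\mathrm{M},x_0)-\mathrm{D}_{\mathrm{M}}F(\mathbf{O}_n,x_0)(\mathrm{M})$ for small $\|\mathrm{M}\|$.

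Under \eqref{Condi-WuYu} together with $(\mathrm{F}1)$ or $(\mathrm{F}2)$, we have \eqref{Eq-Linearization}, so the linearized operator is the Laplacian. The first step is to prove that, for every $\varepsilon>0$, there is $\delta=\delta(\varepsilon,n,\lambda,\Lambda)$ such that if $\|u\|_{L^\infty(\mathrm{B}_1)}\le 1$, the data are $\delta$-small in the $L^{p_0}$ sense, and $\mu\le\delta$, then any viscosity solution of $\mu^{-1}F(\mu D^2u,x)+\langle\mathfrak{B},Du\rangle=f$ is $\varepsilon$-close in $L^\infty(\mathrm{B}_{1/2})$ to a harmonic function $\mathfrak{h}$. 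This rescaling is exactly what flatness produces.

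The argument for this first step is by contradiction and compactness. Uniform $C^{0,\beta}$ estimates (Krylov--Safonov with $L^{p_0}$ data, $p_0>n$, and bounded drift) give a locally uniformly convergent subsequence. The rescaled operators $F_\mu(\mathrm{M})=\mu^{-1}F(\mu\mathrm{M})$ are convex and uniformly elliptic, and $F_\mu\ge \mathrm{Tr}$ by the sub-differential condition. Under $(\mathrm{F}1)$, $F_\mu\to \mathrm{Tr}$ locally uniformly as $\mu\to0$ by differentiability at $\mathbf{O}_n$. Under $(\mathrm{F}2)$, $F_\mu=F$, and a convex $1$-homogeneous $F$ with $\mathrm{Tr}$ as a sub-differential at $\mathbf{O}_n$ is forced to equal $\mathrm{Tr}$ by \eqref{Eq-Linearization}. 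Stability of viscosity solutions then yields a harmonic limit, which gives the contradiction. This replaces the use of $\omega$, so $\delta_0$ depends only on $n,\lambda,\Lambda,\tau(1)$.

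The second step is the iteration, carried out exactly as for Theorem \ref{MainThm01}. We find $\rho\in(0,1/2)$ and quadratic polynomials $P_k$ with $\Delta P_k=0$ (adjusted for the linear drift and the value $f(x_0)$) such that
\[
\sup_{\mathrm{B}_{\rho^k}(x_0)}|u-P_k|\le \rho^{2k}\,\tau(\rho^k)\,\delta_0 ,\qquad |D^2P_{k+1}-D^2P_k|\le \mathrm{C}\,\tau(\rho^k)\,\delta_0 .
\]
The induction uses the rescaling $v_k(y)=(u-P_k)(x_0+\rho^ky)/(\rho^{2k}\tau(\rho^k))$, which solves an equation of the same type with data still small by $(\mathrm{A3})$. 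The nullity conditions $(\mathrm{A4})$ choose $\rho$ so that $\rho^{\alpha_0}\tau(\rho^k)/\tau(\rho^{k+1})$ is small enough to absorb the constant from $C^{2,\alpha_0}$ harmonic estimates. Convexity of $F$ ensures that the shifted operators $\mathrm{M}\mapsto F(\mathrm{M}+D^2P_k)-F(D^2P_k)$ remain within the class. Summing the Dini series $\sum_k\tau(\rho^k)\lesssim\int_0^{\rho^{k}}\tau(s)s^{-1}\,ds$ gives the modulus $\psi$ and the estimate, via a standard covering argument.

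The main obstacle is the shift step. After subtracting $P_k$, the new operator is $F(\cdot+D^2P_k)-F(D^2P_k)$. Its linearization at $\mathbf{O}_n$ is $\mathrm{D}F(D^2P_k)$, not the trace, unless $|D^2P_k|$ stays small. I would handle this by noting that $|D^2P_k|\le \mathrm{C}\delta_0\mathfrak{I}_\tau$ stays small for $\delta_0$ small, and by using convexity to see that sub-differentials at nearby points remain close to $\mathrm{Tr}$. Under $(\mathrm{F}2)$ one can instead normalize by the homogeneity.
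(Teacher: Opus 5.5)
Your route is the paper's. Its proof of Theorem~\ref{MainThm02} is only the remark that the blow-up equation in Lemma~\ref{aproxlemma} becomes $\Delta\mathfrak{h}=0$, after which Lemma~\ref{approxlemmadelta} and Proposition~\ref{Prop5.1} are rerun. However, three of your steps do not hold as written, and that remark does not address them either.

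\textbf{The $(\mathrm{F}2)$ branch is circular.} \eqref{Eq-Linearization} does not follow from \eqref{Condi-WuYu} and $(\mathrm{F}2)$. Take $\lambda\le 1\le\Lambda$. Then $\mathscr{P}^{+}_{\lambda,\Lambda}$ is convex, $1$-homogeneous, vanishes at $\mathbf{O}_n$ and dominates $\mathrm{Tr}$. But $t^{-1}\mathscr{P}^{+}_{\lambda,\Lambda}(t\mathrm{X})$ equals $\mathscr{P}^{+}_{\lambda,\Lambda}(\mathrm{X})$ for $t>0$ and $\mathscr{P}^{-}_{\lambda,\Lambda}(\mathrm{X})$ for $t<0$, and these differ when $\lambda<\Lambda$, so it is not differentiable at $\mathbf{O}_n$. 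A $1$-homogeneous operator differentiable at $\mathbf{O}_n$ is linear, so citing \eqref{Eq-Linearization} simply assumes $F=\mathrm{Tr}$. For a genuinely nonlinear $F$ under $(\mathrm{F}2)$, $F_\mu\equiv F$, flatness is only a normalization, and the tangential equation is $F(D^2\mathfrak{h})=0$. After a shift, homogeneity only turns your difference quotients into difference quotients of $F$ at the unit matrices $\mathrm{M}_k/\|\mathrm{M}_k\|$, so the trace never appears. In general only Evans--Krylov $C^{2,\alpha_{\mathrm{EK}}}$ bounds are then available. Absorbing the constant with $\hat{\tau}(t)=t^{\alpha_0}$ would need $\alpha_0<\alpha_{\mathrm{EK}}$, which is not assumed.

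\textbf{Uniformity under $(\mathrm{F}1)$.} Your compactness argument works for a fixed $F$, but not with $\delta=\delta(\varepsilon,n,\lambda,\Lambda)$. The rate of $\mu^{-1}F(\mu\,\cdot)\to\mathrm{Tr}$ is an $F$-dependent modulus, and it is exactly what $\omega$ supplied. Take $\varphi(\mathrm{X})=\mathrm{Tr}(\mathrm{X})+c(\sqrt{1+\|\mathrm{X}\|^{2}}-1)$ with $0<c<\min\{1-\lambda,\Lambda-1\}$, and $F_j(\mathrm{X})=\varepsilon_j\varphi(\mathrm{X}/\varepsilon_j)$. Each $F_j$ satisfies \eqref{Condi-WuYu} and $(\mathrm{F}1)$ with the same ellipticity, yet $\varepsilon_j^{-1}F_j(\varepsilon_j\mathrm{X})\equiv\varphi(\mathrm{X})$. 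A nonzero quadratic solution of $\varphi(D^2\mathfrak{h})=0$ has $\Delta\mathfrak{h}\neq 0$, so it stays a fixed distance from harmonic functions. Hence your approximation lemma, and the $\delta_0$ and $\mathrm{C}_0$ built from it, must depend on that modulus.

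\textbf{The shift step.} The claim that convexity keeps the shifted operators in the class is false. The map $\mathrm{M}\mapsto F(\mathrm{M}+\mathrm{M}_k)-F(\mathrm{M}_k)$ has subdifferential $\partial F(\mathrm{M}_k)$ at $\mathbf{O}_n$, which need not contain $\mathrm{Tr}$. Your fallback is the correct repair, and it goes beyond what the paper provides:
\begin{itemize}
\item add $\|\mathrm{M}\|\le\eta$ to the hypotheses of the approximation lemma, which is legitimate since $\|\mathrm{M}_k\|\le\mathrm{C}_{\mathscr{H}}\delta\psi(1)$;
\item use the sandwich $S_{\mathrm{M}}(\mathrm{X})\le\mu^{-1}[F(\mathrm{M}+\mu\mathrm{X})-F(\mathrm{M})]\le S_{\mathrm{M}+\mu\mathrm{X}}(\mathrm{X})$ for subgradients $S$;
\item combine it with outer semicontinuity of $\partial F$ and $\partial F(\mathbf{O}_n)=\{\mathrm{Tr}\}$, so the limit is the trace as $(\mu,\mathrm{M})\to 0$.
\end{itemize}
With these changes you prove the $(\mathrm{F}1)$ case, with constants that also depend on how fast $\partial F(\mathrm{M})$ shrinks to $\{\mathrm{Tr}\}$. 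The $(\mathrm{F}2)$ case needs a different argument.
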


\begin{remark} For clarity, we emphasize that the derivation of the preceding Schauder estimates relies decisively on the $F$--harmonic approximation established in Lemma~\ref{aproxlemma}. Under the assumptions of Theorem~\ref{MainThm02}, the geometric tangential equation arising in the \textit{Reductio ad Absurdum} argument of Lemma~\ref{aproxlemma} reduces precisely to $\Delta \mathfrak{h} = 0$. Consequently, the geometric iteration scheme developed in Lemma~\ref{approxlemmadelta} and Proposition~\ref{Prop5.1} follows directly, yielding the desired Schauder estimates for flat viscosity solutions in the convex setting.

\end{remark}

\medskip

Finally, we recall that Kovats' seminal work \cite{Kovats97} established Schauder estimates under the convexity hypothesis on $F$ (see also the state-of-the-art discussion in Section~\ref{State-of-the-art}). Nonetheless, Theorem~\ref{MainThm02} sharpens Kovats' result in the context of flat viscosity solutions with convex nonlinearity. Specifically, the right-hand side of our estimates is independent of the norms of solution and the associated data, unlike in Kovats' framework. In addition, our model PDE incorporates drift terms, which were not addressed in the original analysis of \cite{Kovats97}.

\subsection*{The \textit{strategi} behind the geometric tangential approach}

We conclude this introduction by outlining the heuristic framework of the geometric tangential analysis underlying our proofs. By way of explanation, consider a fully nonlinear elliptic operator \( F: \text{Sym}(n) \to \mathbb{R} \) such that $F(\mathbf{O}_n) = 0$ (this is not a restrictive assumption). Then, the family of elliptic scaling functions defined by
\[
\mathcal{G}_{\sigma}(\mathrm{X}) := \frac{1}{\sigma} F(\sigma \mathrm{X}), \quad \text{for } \,\,\,\sigma > 0.
\]
constitutes a continuous curve of operators that preserves the ellipticity constants of the original equation (i.e., $0< \lambda\leq \Lambda$). Indeed, $\mathcal{G}_{\sigma}$ satisfies the same assumptions $(\mathrm{A}1)-(\mathrm{A}4)$.

Now, if \( F: \text{Sym}(n) \to \mathbb{R}\) is differentiable (for instance, at the origin, recalling the normalization \( F(\mathbf{O}_n) = 0 \)), then we have
\[
\displaystyle \lim_{\sigma \to 0} \mathcal{G}_{\sigma}(\mathrm{X}) = \lim_{\varrho \to 0} \frac{F(\sigma \mathrm{X}+\mathbf{O}_n)-F(\mathbf{O}_n)}{\sigma} = \mathrm{D}_{\mathrm{X}}F(\mathbf{O}_n) = \sum_{i,j=1}^{n}\frac{ \partial F}{\partial \mathrm{X}_{ij}}(\mathbf{O}_n) \mathrm{X}_{ij}.
\]
In other words, the linear, uniformly elliptic operator \(\displaystyle \mathrm{X} \mapsto \sum_{i,j=1}^{n}\frac{ \partial F}{\partial \mathrm{X}_{ij}}(\mathbf{O}_n) \mathrm{X}_{ij}  = \mathrm{Tr}(\mathfrak{A}_0\mathrm{X})\) represents the tangential equation associated with \(\mathcal{G}_{\sigma} \) in the limiting configuration as \( \sigma \to 0 \), where $(\mathfrak{A}_0)_{ij} = \frac{ \partial F}{\partial \mathrm{X}_{ij}}(\mathbf{O}_n)$ for every $1 \leq i, j\leq n$. 

Now, if \( u: \mathrm{B}_1 \to \mathbb{R} \) is a solution to an equation involving the original operator \( F: \text{Sym}(n) \to \mathbb{R}\), i.e.,
$$
F(D^2 u) + \langle \mathfrak{B}(x), Du\rangle = f(x) \quad \text{in} \quad \mathrm{B}_1,
$$
then the scaled function \( u^{\mathfrak{a}}_{\sigma}(x) := \frac{1}{\sigma^{2}\tau(\sigma)\mathfrak{a}} u(\sigma x) \) (a scaled and normalized profile) satisfies a corresponding equation for 
$$
\mathcal{G}_{\sigma^{\alpha} \mathfrak{a}}(D^2 u^{\mathfrak{a}}_{\sigma}(x)) + \langle \mathfrak{B}_{\sigma}^{\mathfrak{a}}(x), D u^{\mathfrak{a}}_{\sigma}(x)\rangle = \frac{1}{\tau(\sigma) \mathfrak{a}} f(\sigma x) := f_{\sigma}^{\mathfrak{a}}(x) \quad \text{in} \quad \mathrm{B}_{1}, 
$$
where $\mathfrak{B}_{\sigma}^{\mathfrak{a}}(x) = \frac{\sigma}{\mathfrak{a}} \mathfrak{B}(\sigma x)$, and  $\mathfrak{a}>0$ is chosen in such a way $\|f_{\sigma}^{\mathfrak{a}}\|_{\infty, \mathrm{B}_1} = \text{o}(1)$ and $\|\mathfrak{B}_{\sigma}^{\mathfrak{a}}\|_{\infty, \mathrm{B}_1} = \text{o}(1)$ as $\sigma \to 0$.

This strategy allows us to access some results on the classification of global profiles (i.e., Liouville-type results) available for the linear tangential equation via compactness and stability arguments. Indeed, we can conclude the following:
$$
\begin{array}{ccc}
 \left\{ 
 \begin{array}{lcl}
 \mathcal{G}_{\sigma^{\alpha} \mathfrak{a}}(\mathrm{X}) \to  \mathrm{Tr}(\mathfrak{A}_0 \mathrm{X}) & \text{as}& \sigma \to 0    \\
 \mathfrak{B}_{\sigma}^{\mathfrak{a}} \to 0 & \text{as} & \sigma \to 0\\
 f_{\sigma}^{\mathfrak{a}} \to 0 & \text{as}& \sigma \to 0\\
 u^{\mathfrak{a}}_{\sigma} \to \mathbf{U} & \text{as}& \sigma \to 0
 \end{array}
 \right.& \Rightarrow & \mathrm{Tr}(\mathfrak{A}_0 D^2 \mathbf{U}) = 0 \quad \text{in} \quad \mathrm{B}_{3/4} \quad \Rightarrow \quad \mathbf{U} \in C_{\text{loc}}^{2,1} 
\end{array}
$$

Hence, we transfer these sharp limiting estimates to $u^{\mathfrak{a}}_{\sigma}$, appropriately adjusted via the geometric tangential path employed to access the tangential linear elliptic regularity theory. It relies on a control of oscillation decay obtained from the regularity
theory available for a fine limiting equation (a harmonic profile); the realm of the so-called \textbf{geometric tangential
analysis}. Precisely, by performing a geometric iteration, we establish pointwise $\mathrm{C}^{2,\psi}$ regularity for solutions to the problem \eqref{MainEq}: there exists a sequence of quadratic polynomials $\{\mathfrak{P}_k\}_{k \in \mathbb{N}}$ such that
    $$
 \displaystyle\sup_{\mathrm{B}_{\rho^k}(x_0)} \frac{\left|u(x)-\mathfrak{P}_k(x)\right|}{\rho^{2k}\tau(\rho^k)}\leq 1 \quad \stackrel[\Longrightarrow]{\text{Geometric }}{{\text{iteration}}} \,\,\,u \,\, \text{is} \,\,\,\mathrm{C}^{2, \psi} \quad \text{at}\,\,\,x_0,\,\,\,\text{where}\,\,\,\psi(t) = \tau(t) + \int_{0}^{t} \frac{\tau(s)}{s}ds.
$$

Finally, a normalization, scaling process, and a standard covering argument reduce the general setting to the analysis of the previous steps.

We recommend to the reader the following enlightening recent surveys \cite[Ch. 5]{João},  \cite{Teixeira2016}, and \cite{Teixeira2020} for the development of lines of investigation on geometric tangential methods and their application in nonlinear PDEs and related topics.

\section{Schauder estimates: a brief mathematical tour}\label{State-of-the-art}

The development of Schauder theory has significantly shaped the modern perspective that solving a PDE is, in essence, equivalent to establishing an \textbf{a priori} estimate, i.e., obtaining bounds on a solution before constructing it explicitly. The Schauder estimates play a fundamental role in the theory of linear/nonlinear elliptic PDEs due to a wide class of applications and connections with other areas of mathematics (see \cite{Kichenassamy2006} for an enlightening survey on Schauder estimates and their several applications).

Throughout the last decades, several proofs were developed, among which we highlight the following strategies:

\begin{itemize}
    \item[\checkmark] Potential-theoretic approaches based on the fundamental solution, which dates back to Schauder's original work in the 1930s (see \cite[Ch. 4.3]{Friedman1964} and \cite[Ch. 4]{Gilbarg-Tru2001} and \cite{Kichenassamy2006});
    \item[\checkmark] Energy considerations by invoking Caccioppoli-type inequalities (see \cite[Ch. 3.4]{João} and \cite[Ch. 3.4]{Han-Lin-Book});
    \item[\checkmark] An approach based on Maximum/Comparison Principle (see the books \cite[Ch. 3.3]{João}, \cite[Theorem 2.20]{FR-O}, and the manuscript \cite[Theorem 1.1]{Wang2006}); 
    \item[\checkmark] Blow-up arguments applied by Simon in the seminal work \cite{Sim97} (cf.  \cite[Theorem 2.20]{FR-O});
    \item[\checkmark] Convolution operator techniques employed by Peetre in \cite{Peetre1966};
    \item[\checkmark] Mollification methods  utilized by Trudinger in \cite{Trud1986};
    \item[\checkmark] Perturbation methods developed by Safonov in \cite{Safonov1984} and \cite{Safonov1989}, and Caffarelli in \cite{Caffarelli1989} (see also \cite{CafCabre1995}).
\end{itemize}

Another powerful technique is based on the polynomial characterization of $C^{k,\alpha}$ spaces, introduced by Campanato in \cite{Campanato1963} and \cite{Campanato1964} (see Kovats' work \cite{Kovats99} for a characterization of Dini-Campanato spaces). In this context, Caffarelli
and Safonov (see above references) 
were among the first authors to use this characterization to establish pointwise Schauder estimates.

\bigskip

In the scenario of linear equations in non-divergence form, we have the following version of Schauder estimates: 

\begin{theorem}[{\bf Interior estimates - \cite[
Theorem 2.20]{FR-O}}]\label{thm:schauder}
Let $\alpha \in (0,1)$ fixed, and $u \in C^{2,\alpha}(\mathrm{B}_1)$ be any solution to
\[
\mathrm{Tr}(\mathfrak{A}(x)D^2u(x)) \defeq \sum_{i,j=1}^{n} a_{ij}(x) \, \partial_{ij} u(x) = f(x) \quad \text{in } \mathrm{B}_1,
\]
with $f \in C^{0,\alpha}(\mathrm{B}_1)$ and $a_{ij} \in C^{0,\alpha}(\mathrm{B}_1)$, where the matrix $\mathfrak{A}(x) = (a_{ij}(x))_{i,j}$ satisfies the uniform ellipticity condition
\begin{equation}\label{eq:ellipticity}
\lambda|\xi|^2 \leq \sum_{i,j=1}^{n} a_{ij}(x)\xi_i\xi_j \leq \Lambda|\xi|^2 \quad \text{for all } x \in\mathrm{B}_1\,\,\,\text{and}\,\,\, \xi \in \mathbb{R}^n
\end{equation}
for some constants $0 < \lambda \leq \Lambda < \infty$. Then,
\[
\|u\|_{C^{2,\alpha}(\mathrm{B}_{1/2})} \leq \mathrm{C}\left( \|u\|_{L^{\infty}(\mathrm{B}_1)} + \|f\|_{C^{0,\alpha}(\mathrm{B}_1)} \right)
\]
for some constant $\mathrm{C} > 0$ depending only on $\alpha$, $n$, $\lambda$, $\Lambda$, and $\|a_{ij}\|_{C^{0,\alpha}(\mathrm{B}_1)}$.
\end{theorem}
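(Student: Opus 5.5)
We prove the classical linear interior Schauder estimate of Theorem~\ref{thm:schauder} by the perturbative, Campanato-type scheme that the rest of the paper adapts to the fully nonlinear setting. We freeze coefficients at a point and compare $u$ with quadratic polynomials that solve the frozen constant-coefficient equation. By translation and scaling it suffices to prove a pointwise estimate at $x_0=0$. We normalize so that $\|u\|_{L^\infty(\mathrm{B}_1)}+\|f\|_{C^{0,\alpha}}/\varepsilon\le 1$. The coefficient oscillation is made small by a preliminary dilation $u_r(x)=u(rx)$: this multiplies the $C^{0,\alpha}$ seminorm of $a_{ij}$ by $r^\alpha$. This is where the dependence on $\|a_{ij}\|_{C^{0,\alpha}}$ enters.

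The first step is an approximation lemma. Given $\varepsilon>0$, suppose $|\mathfrak{A}(x)-\mathfrak{A}(0)|\le\delta$ and $|f|\le\delta$ in $\mathrm{B}_1$ with $\delta=\delta(\varepsilon)$ small. Then there is a solution $h$ of $\mathrm{Tr}(\mathfrak{A}(0)D^2h)=0$ in $\mathrm{B}_{3/4}$ with $\|u-h\|_{L^\infty(\mathrm{B}_{1/2})}\le\varepsilon$. We argue by contradiction and compactness. Uniform $C^{0,\gamma}$ (Krylov--Safonov) bounds give a uniformly convergent subsequence, and stability of viscosity solutions shows the limit solves a constant-coefficient equation. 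After a linear change of variables that equation is the Laplacian, so $h$ is smooth with $\|h\|_{C^{3}(\mathrm{B}_{1/2})}\le \mathrm{C}(n,\lambda,\Lambda)$.

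The second step is one-step improvement. Let $\mathfrak{P}$ be the second-order Taylor polynomial of $h$ at $0$, so that $\mathrm{Tr}(\mathfrak{A}(0)D^2\mathfrak{P})=0$. Then
\[
\sup_{\mathrm{B}_\rho}|u-\mathfrak{P}|\le \varepsilon+\mathrm{C}\rho^3\le \rho^{2+\alpha},
\]
once we fix $\rho$ with $\mathrm{C}\rho^{3}\le\rho^{2+\alpha}/2$ and then $\varepsilon=\rho^{2+\alpha}/2$.

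The third step is iteration. We produce quadratic polynomials $\mathfrak{P}_k$ with $\sup_{\mathrm{B}_{\rho^k}}|u-\mathfrak{P}_k|\le\rho^{k(2+\alpha)}$ and $|D^j\mathfrak{P}_{k+1}(0)-D^j\mathfrak{P}_k(0)|\le \mathrm{C}\rho^{k(2+\alpha-j)}$. The inductive step applies the lemma to
\[
v(x)=\rho^{-k(2+\alpha)}\,(u-\mathfrak{P}_k)(\rho^k x).
\]
This $v$ solves $\mathrm{Tr}(\mathfrak{A}(\rho^kx)D^2v)=\tilde f$ with
\[
\tilde f(x)=\rho^{-k\alpha}\Bigl(f(\rho^kx)-\mathrm{Tr}\bigl(\mathfrak{A}(\rho^kx)D^2\mathfrak{P}_k\bigr)\Bigr).
\]

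This is where the main obstacle lies: the right-hand side must stay small. We require $\mathfrak{P}_k$ to satisfy $\mathrm{Tr}(\mathfrak{A}(0)D^2\mathfrak{P}_k)=f(0)$, achieved by adding a fixed correction $\frac{f(0)}{2\,\mathrm{tr}\mathfrak{A}(0)}|x|^2$ at the start. With this choice $\tilde f$ equals
\[
\rho^{-k\alpha}\Bigl((f(\rho^kx)-f(0))-\mathrm{Tr}\bigl((\mathfrak{A}(\rho^kx)-\mathfrak{A}(0))D^2\mathfrak{P}_k\bigr)\Bigr),
\]
which is bounded by $[f]_\alpha+[\mathfrak{A}]_\alpha\sup_k|D^2\mathfrak{P}_k|\le\delta$. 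The bound on $\sup_k|D^2\mathfrak{P}_k|$ holds because the increments of $D^2\mathfrak{P}_k$ form a geometric series controlled by $\sum_k\rho^{k\alpha}$. In the same way, $\|\mathfrak{A}(\rho^k\cdot)-\mathfrak{A}(0)\|_\infty\le\delta$ is preserved under the rescaling.

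Finally, we pass to the limit. $\mathfrak{P}_k\to\mathfrak{P}_\infty$, and $|u-\mathfrak{P}_\infty|\le \mathrm{C}|x|^{2+\alpha}$ at every point of $\mathrm{B}_{1/2}$ with uniform constants. By the Campanato characterization this pointwise control yields $u\in C^{2,\alpha}(\mathrm{B}_{1/2})$. Undoing the normalization then gives the stated estimate
\[
\|u\|_{C^{2,\alpha}(\mathrm{B}_{1/2})}\le \mathrm{C}\bigl(\|u\|_\infty+\|f\|_{C^{0,\alpha}}\bigr).
\]
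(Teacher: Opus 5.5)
The paper does not prove this theorem. It is quoted as background from \cite[Theorem 2.20]{FR-O}, where (as the survey in Section~\ref{State-of-the-art} indicates) it is obtained by Simon's blow-up argument. There one proves $[D^2u]_{C^{0,\alpha}(\mathrm{B}_{1/2})}\le \varepsilon[D^2u]_{C^{0,\alpha}(\mathrm{B}_{1})}+\mathrm{C}_\varepsilon(\|u\|_{L^\infty(\mathrm{B}_1)}+\|f\|_{C^{0,\alpha}(\mathrm{B}_1)})$ by contradiction: a bad sequence is rescaled around near-maximal H\"older quotients of $D^2u_k$, a Liouville theorem for constant-coefficient equations gives the contradiction, and the first term is then absorbed.

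Your proposal is correct and takes a different route, Caffarelli's perturbative scheme. It is in fact the linear, H\"older specialization of the paper's own Lemma~\ref{aproxlemma} and Proposition~\ref{Prop5.1}: take $\tau(r)=r^{\alpha}$ and $F$ linear, so the tangential equation is exact.

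The two routes trade off as follows. The blow-up proof is lighter in tools: it needs only Liouville and interpolation, with no Krylov--Safonov and no viscosity stability. However, it is genuinely an a priori estimate, because the absorption step needs $[D^2u]_{C^{0,\alpha}(\mathrm{B}_1)}<\infty$. Your route pays with Krylov--Safonov and stability, but uses $u\in C^{2,\alpha}$ only to know that $u$ is a solution, so it yields regularity and not merely an estimate. Its polynomial iteration is also exactly the template the paper extends to Dini moduli ($\sum_k\tau(\rho^k)$ replacing $\sum_k\rho^{k\alpha}$, plus a compatibility condition like $\mathrm{(A4)}$) and to flat fully nonlinear equations.

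Two points need tightening.

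First, in your compactness lemma the frozen matrices $\mathfrak{A}_k(0)$ vary with $k$. The limit therefore solves $\mathrm{Tr}(\mathfrak{A}_\infty D^2u_\infty)=0$ with $\mathfrak{A}_\infty=\lim_k\mathfrak{A}_k(0)$, so $u_\infty$ is not itself an admissible competitor for the $k$-th frozen equation. Use $h_k(x)=u_\infty(\mathrm{T}_kx)$ with $\mathrm{T}_k=\mathfrak{A}_\infty^{1/2}\mathfrak{A}_k(0)^{-1/2}\to\mathrm{Id}$. Then $\mathrm{T}_k\mathfrak{A}_k(0)\mathrm{T}_k^{T}=\mathfrak{A}_\infty$, hence $\mathrm{Tr}(\mathfrak{A}_k(0)D^2h_k)=0$, while $\|h_k-u_\infty\|_{L^\infty(\mathrm{B}_{1/2})}\to0$. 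A Dirichlet replacement on $\mathrm{B}_{3/4}$ with data $u_k$ would also work. This matters because your iteration needs $\mathrm{Tr}(\mathfrak{A}(0)D^2\overline{\mathfrak{P}})=0$ exactly in order to preserve $\mathrm{Tr}(\mathfrak{A}(0)D^2\mathfrak{P}_k)=f(0)$.

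Second, the smallness imposed by your normalization and dilation should be the $\delta=\delta(\varepsilon)$ of the lemma, divided by the universal bound on $\sup_k\|D^2\mathfrak{P}_k\|$ coming from the geometric series. It should not be $\varepsilon$ itself. With the order of choices $\rho$, then $\varepsilon$, then $\delta$, then the dilation radius and normalizing constant, there is no circularity.
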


\medskip

In the linear case, see \cite[§2]{Kovats99} for further details, when considering solutions $u \in C^2(\mathrm{B}_1)$ of the Poisson equation
\[
\Delta u(x) = f(x) \quad \text{in} \quad \mathrm{B}_1,
\]
with $f$ a Dini continuous function, the modulus of continuity of $D^2u$ is well understood. In particular, one has the estimate
\[
\sup_{|x - y| \leq t} |D^2u(x) - D^2u(y)| \leq \mathrm{C}_0 \left( \int_0^t \frac{\omega(r)}{r} \, dr + t \int_t^c \frac{\omega(r)}{r^2} \, dr \right),
\]
for some constant $\mathrm{C}_0 > 0$.

\medskip
In the manuscript \cite[Theorem 1.1]{Wang2006},  Wang addressed an elementary and simple proof for the Schauder estimates for elliptic equations. Precisely, let $u \in C^2$ be a solution of $\Delta u(x) = f(x)$ in $\mathrm{B}_1$. Then, for all $x, y \in \mathrm{B}_{1/2}$,
\begin{equation}
|D^2 u(x) - D^2 u(y)| \leq \mathrm{C}_n \left[ d \|u\|_{L^{\infty}(\mathrm{B}_1)} + \int_0^d \frac{\omega(r)}{r} \, dr + d \int_d^1 \frac{\omega(r)}{r^2} \, dr \right],
\end{equation}
where $d = |x - y|$ and $\mathrm{C}_n > 0$ is a constant depending only on $n$. Moreover, Wang's approach also applies to fully nonlinear  PDEs, see \cite[Theorem 3.1]{Wang2006}.

\medskip
In the fully nonlinear setting, Evans \cite{Evans1982} and Krylov  \cite{Krylov1983} proved independently that if $u \in C^2(\mathrm{B}_1)$ is a solution of the nonlinear equation $F(D^2 u) = 0$ in $\mathrm{B}_1$, then $D^2u$ satisfies a H\"{o}lder continuity estimate of the form
\[
|D^2u(x) - D^2u(y)| \leq \mathrm{C}|x - y|^{\alpha_{\mathrm{EK}}} \quad \text{in } \mathrm{B}_{1/2},
\]
for some constant $\alpha_{\mathrm{EK}} \in (0,1)$ and $\mathrm{C}>0$ depending only on ellipticity constants and dimension. 

\medskip

Subsequently, Caffarelli established Schauder estimates for the inhomogeneous problem via a perturbation and compactness technique (see \cite[Ch. 8]{CafCabre1995}). Precisely, if \( u \in C^0(\mathrm{B}_1) \) is a viscosity solution to
\[
F(D^2 u) = f(x) \quad \text{in } \mathrm{B}_1 \subset \mathbb{R}^n, \quad f \in C^{0, \alpha}(\mathrm{B}_1) \quad  \text{for some}\,\,\,  \alpha \in (0,1) \quad \text{and}\quad F \,\,\,\text{convex},
\]
then, there exists a constant \( \alpha_0 \in (0,1) \), depending only on \( n, \lambda \) and \( \Lambda \), such that \( u \in C^{2,\min\{\alpha_0, \alpha\}}(\mathrm{B}_{1/2}) \), and
    \[
    \|u\|_{C^{2,\min\{\alpha_0,\alpha\}}(\mathrm{B}_{1/2})} \leq \mathrm{C}\left( \|u\|_{L^{\infty}(\mathrm{B}_1)} + \|f\|_{C^{0, \alpha}(\mathrm{B}_1)} \right),
    \]
    for some constant \( \mathrm{C} > 0 \) depending only on \( n, \lambda \), \( \Lambda \), and \( \alpha \). 
    
\medskip

Kovats in \cite{Kovats97} investigates the interior regularity and local solvability of concave, fully nonlinear, uniformly elliptic equations of the form
\[
F(D^2u) = f \quad \text{in } \Omega.
\]
The primary objective is to analyze the modulus of continuity of $D^2u$ in terms of the modulus of continuity of $f$, which is assumed to be Dini continuous, in other words, for
 $ \displaystyle \omega(t) = \sup_{x, y \in \Omega \atop{|x - y|} \leq t} |f(x) - f(y)|$, one requires $\displaystyle
\mathfrak{I}_{\omega} \defeq \int_0^t \frac{\omega(r)}{r} \, dr < +\infty.$

In this context, Kovats considers solutions $u \in C^2(\mathrm{B}_1)$ of the nonlinear equation with $f \not\equiv 0$, where $f$ is a Dini continuous function in $\mathrm{B}_1$ that satisfies a certain technical condition on $\omega(t)$. It is established that $D^2u$ has a modulus of continuity bounded by
\[
\sup_{|x - y| \leq t} |D^2u(x) - D^2u(y)| \leq \mathrm{C}^{\ast}(\textit{universal}) \left( \int_0^t \frac{\omega(r)}{r} \, dr + t^{\alpha_{\mathrm{EK}}} \right),
\]
where $\alpha_{\mathrm{EK}} \in (0, 1)$ is the Evans-Krylov exponent. In deriving these results, in addition to employing the Evans-Krylov theory, the author utilizes techniques introduced by Safonov and Caffarelli, including polynomial approximation and Maximum Principle methods.

\medskip

Subsequently, in the work \cite[§3]{Kovats99}, Kovats employed a generalized version of Campanato spaces (namely, Dini-Campanato spaces) to establish a regularity result for a broad class of fully nonlinear elliptic equations of the form
\[
F(D^2u, x) = f(x).
\]
The operator $F$ is assumed to be concave, and the function $f$ is assumed to be Dini-continuous in the $L^n$-norm. Under these assumptions, estimates for the modulus of continuity of $D^2u$ are derived.

\medskip
We must highlight that Bao in \cite{Bao2002} establishes existence and regularity results for viscosity solutions of fully nonlinear second-order uniformly elliptic equations subject to Dirichlet boundary conditions on general bounded domains $\Omega \subset \mathbb{R}^N$. For irregular domains $\Omega$, the boundary condition $u = 0 \quad \text{on } \partial\Omega$ is interpreted in a weak sense through the use of a barrier function associated with the operator. This approach was originally introduced for linear elliptic equations in \cite{BNV1994}. The existence of solutions is obtained via an approximation procedure, which involves penalizing both the equation and the domain. Interior regularity of viscosity solutions in the $C^{2,\psi}$ class, where $\psi$ is a modulus of continuity, is derived using Caffarelli's perturbation technique (see \cite{Caffarelli1989}).

\medskip

Concerning recent trends on (local) higher regularity estimates, in his seminal paper \cite{Savin2007}, Savin investigates viscosity solutions of general second-order fully nonlinear equations of the form
\[
F(D^2u, Du, u, x) = 0,
\]
for which $u = 0$ is a solution. Assuming that $F$ is smooth and uniformly elliptic only in a neighborhood of the points $(0, 0, 0, x)$, Savin establishes interior $C^{2,\alpha}$ regularity for \textbf{flat solutions}, namely, solutions $u$ with sufficiently small $L^{\infty}$-norm. More precisely, the operator $F$ is initially assumed to be merely measurable, and a Harnack inequality is derived for flat solutions; higher regularity is then obtained under the additional assumption that $F$ is smooth. 
\medskip

Cabr\'{e} and Caffarelli \cite[Theorem 1.1]{CabreCaff2003} examined alternative conditions on $F$, namely, weaker convexity/concavity assumptions,
 that guarantee classical solutions to $F(D^2u) = 0$. Their work establishes local $C^{2,\alpha}$ regularity estimates for a class of non-convex/concave operators.

Subsequently, dos Prazeres-Teixeira in the magnum opus manuscript \cite{dosPrazTei2016} studied equations of the form
\begin{equation}\label{Eq-PrazTei}
F(D^2u,x) = f(x)
\end{equation}
where $F$ is a non-convex, fully nonlinear, uniformly elliptic operator. They establish local $C^{2,\alpha}$ regularity for flat solutions of \eqref{Eq-PrazTei},  
under the assumption that the coefficients of $F$ and the source term $f$ belong to the class $C^{0,\alpha}$. The proofs of these results rely on a combination of geometric tangential analysis and perturbative arguments inspired by compactness techniques in the contemporary theory of elliptic PDEs. The authors construct a family of elliptic rescalings that preserve the ellipticity constants of the original operator, thereby enabling the application of tangential linear elliptic regularity theory.

Cao \textit{et al.} \cite{CLW2011} obtained local $C^{2,\alpha}$ estimates for classical solutions of fully nonlinear, uniformly elliptic equations $F(D^2u) = 0$, expressed in terms of the Hessian matrix $D^2u$'s modulus of continuity. Their analysis assumes $F$ is merely locally $C^{1,\beta}$, dispensing with convexity or concavity requirements. Through an iterative scheme based on $L^2$ estimates, they derived H\"older continuity for the Hessian.

Recently, Bhattacharya and Warren, in \cite{BhatWarr2021}, established explicit interior \( C^{2,\alpha} \) estimates for viscosity solutions of fully nonlinear, uniformly elliptic equations that are sufficiently close to linear equations. Moreover, they provided a precise quantitative bound characterizing this closeness.

\begin{definition}[{\cite[
Definition 1.2]{BhatWarr2021}}] A uniformly elliptic, non-linear operator $F$ is said \textbf{almost linear} with constant $\varepsilon>0$ if
\begin{equation}
\|\mathrm{D}_{\mathrm{X}}F(\mathrm{M}) - \mathrm{D}_{\mathrm{X}}F(\mathrm{N})\| \leq \varepsilon
\end{equation}
for all $\mathrm{M}, \mathrm{N} \in \text{Sym}(n)$. Moreover, one defines $\varepsilon$ to be the \textbf{closeness constant} of $F$.
    
\end{definition}

\begin{theorem}[{\cite[
Theorem 1.4]{BhatWarr2021}}] Given $0<\lambda\leq \Lambda$, and $0 < \alpha < \overline{\alpha} < 1$, suppose that $F$ is almost linear with constant $\varepsilon_0$, and let $u \in C^0(\mathrm{B}_1)$ be a viscosity solution of 
$$
F(D^2u) = f(x) \quad \text{in} \quad \mathrm{B}_1. 
$$
If $f \in C^{0,\alpha}(\mathrm{B}_1)$, then $u \in C^{2,\alpha}(\mathrm{B}_{1/2})$ and the following estimate holds:
\begin{equation}
\|u\|_{C^{2,\alpha}(\mathrm{B}_{1/2})} \leq \mathrm{C}\left( \|u\|_{L^{\infty}(\mathrm{B}_1)} + \|f\|_{C^{0,\alpha}(\mathrm{B}_1)} \right),
\end{equation}
where $\mathrm{C}>0$ depends only on $n$, $\lambda$, $\Lambda$, $\varepsilon_0$, $\alpha$, and $\overline{\alpha}$.
\end{theorem}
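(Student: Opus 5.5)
The plan is to run Caffarelli's perturbation scheme (cf.~\cite[Ch.~8]{CafCabre1995}), with the tangential equation at each scale being the \emph{linearization of $F$ at the current Hessian}, rather than a frozen nonlinear operator. Almost linearity is exactly what makes this work uniformly. Set $\mathfrak{A}_{\mathrm{M}} \defeq \mathrm{D}F(\mathrm{M})$, so that $\lambda\mathrm{Id}_n\le \mathfrak{A}_{\mathrm{M}}\le\Lambda\mathrm{Id}_n$. For each $\mathrm{M}$ define the recentered, rescaled operator
\[
G_{\mathrm{M},s}(\mathrm{X}) \defeq \tfrac1s\big(F(\mathrm{M}+s\mathrm{X})-F(\mathrm{M})\big),\qquad s>0 .
\]
By the mean value theorem,
\[
|G_{\mathrm{M},s}(\mathrm{X})-\mathrm{Tr}(\mathfrak{A}_{\mathrm{M}}\mathrm{X})|\le \varepsilon_0\|\mathrm{X}\| ,
\]
and $G_{\mathrm{M},s}$ is again almost linear with the same constant $\varepsilon_0$ and the same ellipticity. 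This invariance under translation and dilation is the key structural fact: every rescaled problem in the iteration belongs to the same class, with the same closeness to a linear constant-coefficient operator. By normalization we may assume $\|u\|_{L^\infty(\mathrm{B}_1)}\le1$ and $\|f\|_{C^{0,\alpha}(\mathrm{B}_1)}\le \delta$ for a small $\delta$; a covering and translation argument then reduces everything to a pointwise $C^{2,\alpha}$ estimate at the origin.

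\textbf{Approximation lemma.} Let $v\in C^0(\mathrm{B}_1)$ with $|v|\le1$ be a viscosity solution of $G(D^2v)=g$, where $G$ is almost linear with constant $\varepsilon$, $G(\mathbf{O}_n)=0$, and $\|g\|_{L^\infty}\le\varepsilon$. Let $h$ solve $\mathrm{Tr}(\mathfrak{A}D^2h)=0$ in $\mathrm{B}_{3/4}$ with $h=v$ on $\partial\mathrm{B}_{3/4}$, where $\mathfrak{A}=\mathrm{D}G(\mathbf{O}_n)$. Then
\[
\|v-h\|_{L^\infty(\mathrm{B}_{1/2})}\le \eta(\varepsilon)\quad\text{with }\eta(\varepsilon)\to0 .
\]
I would prove this by contradiction and compactness. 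The Krylov--Safonov $C^{\alpha}$ estimates up to the boundary for the class $S(\lambda,\Lambda,\cdot)$ give equicontinuity. Viscosity stability then passes to a limit satisfying $\mathrm{Tr}(\mathfrak{A}_\infty D^2v_\infty)=0$, because $G_j(\mathrm{X})\to\mathrm{Tr}(\mathfrak{A}_\infty\mathrm{X})$ locally uniformly. Uniqueness of the Dirichlet problem gives the contradiction. This is precisely the $\beta$-smallness hypothesis of Caffarelli's lemma, with $\beta\le\varepsilon$ measuring the distance from $G$ to its linearization.

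\textbf{Iteration.} Since $h$ solves a constant-coefficient equation, it satisfies $C^{2,1}$ estimates with a universal constant $\mathrm{C}_1$. Hence there is a quadratic polynomial $P$ with $\mathrm{Tr}(\mathfrak{A}D^2P)=0$, $|P|_{C^2}\le\mathrm{C}_1$, and
\[
|v-P|\le \mathrm{C}_1\rho^3+\eta\quad\text{in }\mathrm{B}_\rho .
\]
Fix $\rho$ with $\mathrm{C}_1\rho^{3}\le\frac12\rho^{2+\overline{\alpha}}$; this is possible since $\overline{\alpha}<1$. Then choose $\varepsilon_0,\delta$ so small that $\eta\le\frac12\rho^{2+\overline{\alpha}}$ and the rescaled source term stays small. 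Inductively, one builds quadratic polynomials $P_k$ with $F(D^2P_k)=f(0)$ and $|u-P_k|\le\rho^{k(2+\alpha)}$ in $\mathrm{B}_{\rho^k}$. At each step the new quadratic term is corrected by a small multiple of the identity, which is possible by ellipticity, so that it solves the nonlinear equation $F(D^2P_{k+1})=f(0)$ exactly rather than only its linearization. The correction is of order $\varepsilon_0\|D^2(P_{k+1}-P_k)\|$ and is absorbed into the margin $\rho^{2+\overline{\alpha}}$ versus $\rho^{2+\alpha}$; the gap $\overline{\alpha}-\alpha$ is what pays for this error. For the next step, rescale
\[
v_{k}(x)=\frac{(u-P_k)(\rho^kx)}{\rho^{k(2+\alpha)}},
\]
which solves
\[
G_{D^2P_k,\rho^{k\alpha}}(D^2v_k)=\rho^{-k\alpha}\big(f(\rho^kx)-f(0)\big),
\]
whose right-hand side is bounded by $\delta$. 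The operator is again almost linear with constant $\varepsilon_0$, so the approximation lemma applies with constants independent of $k$. Summing the geometric series for $D^2P_k$ gives the pointwise $C^{2,\alpha}$ expansion, and the standard Campanato argument yields the estimate.

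\textbf{Main obstacle.} I expect the delicate step to be this exact-solvability correction of $P_{k+1}$, together with verifying that the accumulated error stays within the budget created by $\alpha<\overline{\alpha}$. This is where the dependence of $\varepsilon_0$ on $\alpha$ and $\overline{\alpha}$ enters. I would first try to absorb the correction directly into the $\frac12\rho^{2+\overline{\alpha}}$ margin as described above.
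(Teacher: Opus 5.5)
The paper gives no proof of this statement. It is quoted from \cite{BhatWarr2021} in the literature survey, so there is no argument to compare yours against line by line. On its own terms, your proof is correct for the \emph{qualitative} form of the theorem: there is a threshold $\overline{\varepsilon}=\overline{\varepsilon}(n,\lambda,\Lambda,\alpha)>0$ such that almost linearity with constant $\varepsilon_0\le\overline{\varepsilon}$ gives the estimate. Reading $\varepsilon_0$ as a small threshold is forced, because every differentiable uniformly elliptic $F$ is almost linear with some constant depending only on $n,\lambda,\Lambda$.

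Your key observation is right: the recentred rescalings $G_{\mathrm{M},s}$ stay in the same class with the same $\varepsilon_0$, so they are uniformly $\varepsilon_0\|\mathrm{X}\|$-close to $\mathrm{Tr}(\mathrm{D}F(\mathrm{M})\mathrm{X})$. The resulting scheme is essentially the paper's own machinery for Theorem~\ref{MainThm01}. It has three parts: compactness against the linearized constant-coefficient equation (Lemma~\ref{aproxlemma}); a correction by $\frac12 a|x|^2$ so the approximating polynomial solves the nonlinear equation exactly; and the recentred iteration $F_k(\mathrm{M},x)=\tau(\rho_0^k)^{-1}F(\tau(\rho_0^k)\mathrm{M}+\mathrm{M}_k,\rho_0^k x)$ of Proposition~\ref{Prop5.1}. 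The difference is that your global closeness $\varepsilon_0$ replaces the flatness mechanism used there ($\mu\to0$, $\|u\|_{L^\infty}\le\delta_0$). That is exactly why you need no smallness of $u$ beyond normalization.

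Your route does not deliver the feature the paper attributes to the cited result. According to the paper, Bhattacharya--Warren give a quantitative bound on the closeness constant. Your threshold instead passes through $\eta(\varepsilon)\to0$ in a contradiction argument, so it is not computable.

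Note also that $\overline{\alpha}$ plays no real role in your scheme, and the step you flag as the main obstacle is routine. With $\mathfrak{A}_k=\mathrm{D}F(D^2P_k)$, you have $|G(D^2P)|=|G(D^2P)-\mathrm{Tr}(\mathfrak{A}_kD^2P)|\le\varepsilon_0\mathrm{C}_1$. Since $a\mapsto G(D^2P+a\,\mathrm{Id}_n)$ increases at rate at least $n\lambda$, there is a root with $|a|\le\varepsilon_0\mathrm{C}_1/(n\lambda)$, costing $\frac{|a|}{2}\rho^2$ on $\mathrm{B}_\rho$. This is absorbed by requiring $\varepsilon_0\lesssim\rho^{\alpha}/\mathrm{C}_1$, just as $\eta$ is. So you may fix $\rho$ by $\mathrm{C}_1\rho^{3}\le\frac13\rho^{2+\alpha}$ directly.

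Two small normalization points:
\begin{enumerate}
\item Take $\|u\|_{L^\infty(\mathrm{B}_1)}\le\frac12$, so that the base step $|u-P_0|\le1$ holds with $P_0=\frac{a_0}{2}|x|^2$ and $F(a_0\mathrm{Id}_n)=f(0)$.
\item Assume $F(\mathbf{O}_n)=0$, or absorb $F(\mathbf{O}_n)$ into $f$ at the cost of dependence on $|F(\mathbf{O}_n)|$.
\end{enumerate}
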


\medskip 

Related results involving Cordes-type ellipticity conditions were also obtained by Huang in \cite{HuangAIHP}, where Morrey-type and BMO estimates are proved under a quantitative closeness assumption. In addition, Huang \cite{HuangPAMS} established Schauder-type estimates for fully nonlinear uniformly elliptic equations satisfying a polynomial Liouville property, which yield $C^{2,\alpha}$ regularity for every $\alpha<1$ without requiring concavity or convexity of $F$.

Another condition that can lead to improved regularity arises when the ellipticity constants \( 0 < \lambda \leq \Lambda \) are sufficiently close. Specifically, Wu and Niu, in \cite[Theorem 1.4]{WuNiu2023}, employ compactness techniques to establish interior \( C^{2,\alpha} \) regularity for viscosity solutions of fully nonlinear, uniformly elliptic equations under the assumption that the ellipticity constants are nearly equal. It is worth noting that the homogeneous version of this result had already been obtained nearly a decade earlier by Da Silva in his Ph.D. thesis (see \cite[Ch. 5]{DaSilva-PhDThesis}).

\medskip
In conclusion, Goffi, in \cite{Goffi2024}, undertakes a comprehensive study of \textit{a priori} estimates and Evans-Krylov-type regularity results in H\"{o}lder spaces for fully nonlinear, second-order, uniformly elliptic or uniformly parabolic equations, even in the absence of concavity or convexity of the operator on the space of symmetric matrices. Furthermore, Goffi addresses the following generalization of Nirenberg’s classical result \cite{Nirenberg1953} to the setting of viscosity solutions.

\begin{theorem}[{\bf \cite[Theorem Appendix A.1]{Goffi2024}}]
    
Let $u : \mathrm{B}_1 \to \mathbb{R}$, with $\mathrm{B}_1 \subset \mathbb{R}^2$, and suppose that $u$ is a continuous viscosity solution to
\[
F(D^2 u) = f(x) \quad \text{in } \mathrm{B}_1.
\]
Assume that $F: \text{Sym}(2) \to \mathbb{R}$ is uniformly elliptic (no other assumptions are required), and that $f \in C^{0,\alpha}(\mathrm{B}_1)$. Then, for some small $\alpha \in (0, 1)$, we have the regularity estimate
\[
\|u\|_{C^{2,\alpha}(\mathrm{B}_{1/2})} \leq \mathrm{C}\left( \|u\|_{L^{\infty}(\mathrm{B}_1)} + \|f\|_{C^{0, \alpha}(\mathrm{B}_1)} \right),
\]
where the constants $\alpha$ and $\mathrm{C}>0$ depend only on $\lambda$ and $\Lambda$.
\end{theorem}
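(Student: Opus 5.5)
The plan is to follow Caffarelli's perturbative scheme \cite[Ch.~8]{CafCabre1995}, with Nirenberg's two-dimensional estimate for the \emph{homogeneous, constant right-hand side} problem playing the role of the Evans--Krylov theorem (Theorem~\ref{thm:evans-krylov}). The argument has two stages. First, we prove a $C^{2,\bar\alpha}$ estimate, with $\bar\alpha=\bar\alpha(\lambda,\Lambda)\in(0,1)$, for viscosity solutions of $F(D^2\mathfrak{h})=c$ in $\mathrm{B}_1\subset\mathbb{R}^2$, uniformly in the constant $c$. Second, we run the usual approximation-and-iteration argument to transfer this estimate to $F(D^2u)=f$ with $f\in C^{0,\alpha}$, for any $\alpha<\bar\alpha$. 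The statement's ``some small $\alpha$'' is then any fixed $\alpha<\bar\alpha$.

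\emph{Stage 1 (a priori estimate).} Assume first that $F$ is smooth and $\mathfrak{h}\in C^3$. For a unit direction $e$, the function $v=\partial_e\mathfrak{h}$ satisfies the linear equation $a_{ij}(x)v_{ij}=0$, where $a_{ij}(x)=\partial F/\partial \mathrm{X}_{ij}(D^2\mathfrak{h}(x))$. These coefficients are merely measurable in $x$, but by $(\mathrm{A1})$ they satisfy $\lambda\,\mathrm{Id}\le (a_{ij})\le\Lambda\,\mathrm{Id}$. In two dimensions we divide by $a_{11}$ and invoke the Bers--Nirenberg/Morrey theory for non-divergence equations with measurable coefficients in the plane. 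Concretely, $(v_{1},v_{2})$ solves a Beltrami-type (quasiconformal) system whose dilatation is controlled by $\Lambda/\lambda$. This yields $\|v\|_{C^{1,\bar\alpha}(\mathrm{B}_{1/2})}\le \mathrm{C}\|v\|_{L^\infty(\mathrm{B}_{3/4})}$. Combining it with the interior $C^{1,\alpha_{\mathrm{Hom}}}$ estimate to bound $\|D\mathfrak{h}\|_{L^\infty}$ by $\|\mathfrak{h}\|_{L^\infty}$ gives
\[
\|\mathfrak{h}\|_{C^{2,\bar\alpha}(\mathrm{B}_{1/2})}\le \mathrm{C}(\lambda,\Lambda)\,\|\mathfrak{h}\|_{L^\infty(\mathrm{B}_1)}
\]
after subtracting a suitable quadratic. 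This estimate is independent of the smoothness of $F$.

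\emph{Stage 1 (removing smoothness).} To pass to a merely uniformly elliptic $F$ and a viscosity solution $\mathfrak{h}$, we mollify $F$ on $\text{Sym}(2)$ to get smooth $F_\varepsilon$ with the same ellipticity constants. We then solve the Dirichlet problems $F_\varepsilon(D^2\mathfrak{h}_\varepsilon)=c$ in $\mathrm{B}_{3/4}$ with $\mathfrak{h}_\varepsilon=\mathfrak{h}$ on $\partial\mathrm{B}_{3/4}$, using smooth approximations of the boundary data. Existence of classical solutions comes from the method of continuity, which needs two inputs: the interior estimate above, and the two-dimensional global $C^{2,\beta}$ estimates on the smooth domain. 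Uniform $C^{2,\bar\alpha}$ interior bounds, the comparison principle for $x$-independent $F$, and stability of viscosity solutions then give $\mathfrak{h}_\varepsilon\to\mathfrak{h}$, and the estimate passes to the limit. This approximation step is the one I expect to be the main obstacle. The difficulty is the boundary regularity needed for the continuity method. If it proves troublesome, the fallback is to construct $\mathfrak{h}_\varepsilon$ by Perron's method and obtain $C^{2}$ interior regularity by a vanishing-viscosity or difference-quotient bootstrap for smooth $F_\varepsilon$, which is available in $2$D.

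\emph{Stage 2 (perturbation).} Normalize so that $\|u\|_{L^\infty(\mathrm{B}_1)}\le1$ and $[f]_{C^{0,\alpha}}\le\delta$, and work at a point $x_0$ with $f(x_0)$ absorbed into $F$. An approximation lemma, proved by compactness and stability, produces $\mathfrak{h}$ with $F(D^2\mathfrak{h})=f(x_0)$ and $\|u-\mathfrak{h}\|_{L^\infty(\mathrm{B}_{1/2})}\le\varepsilon$. Stage~1 then provides a quadratic $P_1$ with $\sup_{\mathrm{B}_\rho}|u-P_1|\le\rho^{2+\alpha}$, once $\rho$ is chosen so that $\mathrm{C}\rho^{\bar\alpha-\alpha}\le 1/2$ and $\varepsilon$ is small. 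We iterate on the rescalings $u_k(x)=\rho^{-k(2+\alpha)}(u-P_k)(\rho^kx)$. These solve $F_k(D^2u_k)=f_k$, where $F_k(\mathrm{X})=\rho^{-k\alpha}\big[F(\rho^{k\alpha}\mathrm{X}+D^2P_k)-F(D^2P_k)\big]$ has the same ellipticity constants and $f_k$ has small $C^{0,\alpha}$ seminorm. Stage~1 applies to each $F_k$ with constants depending only on $\lambda,\Lambda$. This yields the Campanato-type expansion at every point of $\mathrm{B}_{1/2}$ with coefficient bounds, and a standard covering argument concludes $\|u\|_{C^{2,\alpha}(\mathrm{B}_{1/2})}\le \mathrm{C}(\|u\|_{L^\infty(\mathrm{B}_1)}+\|f\|_{C^{0,\alpha}(\mathrm{B}_1)})$.
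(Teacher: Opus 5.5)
The paper does not prove this statement. It is quoted from \cite{Goffi2024} in the survey of Section~\ref{State-of-the-art}, so there is no in-paper argument to compare yours with.

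Judged on its own, your outline is correct and follows the natural route. Nirenberg's planar estimate replaces Evans--Krylov as the input of Caffarelli's perturbation scheme \cite[Ch.~8]{CafCabre1995}; concretely, this is the Bers--Nirenberg/Morrey $C^{1,\beta}$ theory for $W^{2,2}$ strong solutions of $a_{ij}v_{ij}=0$ with measurable coefficients (cf.\ \cite[Ch.~12 and Ch.~17]{Gilbarg-Tru2001}). Using the full frozen operator $F-c$ as the approximating equation is legitimate because that estimate holds for every $(\lambda,\Lambda)$-elliptic operator on $\text{Sym}(2)$, with constants depending only on $\lambda,\Lambda$. That class is closed under your rescalings $F_k$ and under the local uniform limits of the compactness lemma.

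This is the real difference from the paper's own method. For non-convex $F$ in arbitrary dimension, the only tangential equation with good estimates is the linearization $\operatorname{Tr}(\mathfrak{A}_0\mathrm{X})$, reached through $\mu^{-1}F(\mu\,\cdot)$. That is why Lemma~\ref{aproxlemma} needs $F\in C^1$ with modulus $\omega$ and the flatness $\|u\|_{L^\infty(\mathrm{B}_1)}\le\delta_0$. In return, its tangential profile is smooth, which lets the iteration carry any Dini modulus satisfying (A4) rather than a fixed small H\"older exponent. Your route needs neither differentiability of $F$ nor flatness. It pays by capping the exponent at Nirenberg's $\bar\alpha(\lambda,\Lambda)$, which is exactly the ``some small $\alpha$'' of the statement.

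Three points should be made explicit when you write it out.

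\emph{First point.} The uniformity in $c$ claimed in Stage~1 rests on $|c-F(\mathbf{O}_2)|\le \mathrm{C}(\lambda,\Lambda)\|\mathfrak{h}\|_{L^\infty(\mathrm{B}_1)}$. So does the compactness in the approximation lemma, since you do not impose $F(D^2P_k)=f(x_0)$ and the constants $f_k(0)$ are therefore not normalized. This bound follows by comparison with paraboloids from
$\mathscr{P}^-_{\lambda,\Lambda}(D^2\mathfrak{h})\le c-F(\mathbf{O}_2)\le\mathscr{P}^+_{\lambda,\Lambda}(D^2\mathfrak{h})$ in the viscosity sense. The same argument bounds $|f_k(0)|$ along the iteration.

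\emph{Second point.} In the smoothing step, the cleanest way to get $\mathfrak{h}_\varepsilon\to\mathfrak{h}$ is direct comparison rather than stability.
\begin{itemize}
\item Since $|F_\varepsilon-F|\le \mathrm{C}\varepsilon$, $\mathfrak{h}$ is a viscosity sub/supersolution of $F_\varepsilon(D^2\cdot)=c\mp\mathrm{C}\varepsilon$.
\item Test against the classical $\mathfrak{h}_\varepsilon$ perturbed by $\pm K\varepsilon(\tfrac{9}{16}-|x|^2)$, where $\varphi_\varepsilon$ denotes the smoothed boundary data.
\item This gives $\|\mathfrak{h}_\varepsilon-\mathfrak{h}\|_{L^\infty(\mathrm{B}_{3/4})}\le\|\varphi_\varepsilon-\mathfrak{h}\|_{L^\infty(\partial\mathrm{B}_{3/4})}+\mathrm{C}\varepsilon$, with no need for equicontinuity up to the boundary.
\end{itemize}
The only existence input is the two-variable global $C^{2,\beta}$ solvability in \cite[Ch.~17]{Gilbarg-Tru2001}.

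\emph{Third point.} Drop the fallback. Difference quotients of a merely viscosity solution are only known to lie in a Pucci class. The planar Bers--Nirenberg theory needs a $W^{2,2}$ strong solution of a single linear equation, so that route would not close without the very regularity you are trying to prove.
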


Therefore, the preceding (albeit incomplete) literature review served as our primary motivation to derive local Schauder-type estimates for flat solutions of non-convex, fully nonlinear elliptic PDEs with a drift term, under suitable Dini continuity assumptions on the data. Our approach relies on geometric tangential methods, compactness arguments, and approximation schemes. Furthermore, to the best of our knowledge, these results are new even in the case of the model problem with vanishing drift.

\section{Preliminaries}

In this section, we provide essential definitions and auxiliary results that are fundamental to our approach in establishing the Schauder estimates, within the context of our discussion.

First, we recall the definition of Dini spaces.

\begin{definition}[{\bf The Space \(C^{k,\mathrm{Dini}}(\Omega)\)} - {\cite{Kovats97}\cite{João}}]

Let \( f: \Omega \subset \mathbb{R}^n \to \mathbb{R} \) be a function of class \( C^k \). We say that \( f \in C^{k,\mathrm{Dini}}(\Omega) \) if all partial derivatives of order \(k\) are Dini-continuous; that is, for every multi-index \( \beta \in \mathbb{N}^n \) with \( |\beta| = k \),
\[
[D^\beta f]_{\mathrm{Dini}(\Omega)} := \int_0^1 \frac{\omega_{D^\beta f}(r)}{r} \, dr < \infty,
\]
where the \textbf{modulus of continuity} \( \omega_{D^\beta f} \) is given by
\[
\omega_{D^\beta f}(r) := \sup_{\substack{x,y \in \Omega \\ |x - y| \leq r}} |D^\beta f(x) - D^\beta f(y)|.
\]

\medskip

In this case, we define the \textbf{norm} in the space \(C^{k,\mathrm{Dini}}(\Omega)\) as:
\[
\|f\|_{C^{k,\mathrm{Dini}}(\Omega)} := \sum_{|\beta| \leq k} \|D^\beta f\|_{L^\infty(\Omega)} + \sum_{|\beta| = k} [D^\beta f]_{\mathrm{Dini}(\Omega)}.
\]

\medskip

The quantity
\[
[f]_{\mathrm{Dini}(\Omega)} := \int_0^1 \frac{\omega_f(r)}{r} \, dr
\]
is called the \textbf{Dini semi-norm}, which provides a weaker regularity than the classical H\"{o}lder semi-norm:
\[
[f]_{C^{0,\beta}(\Omega)} := \sup_{x, y \in \Omega \atop{x \neq y}} \frac{|f(x) - f(y)|}{|x - y|^\beta}.
\]
\end{definition}

The following definition introduces viscosity solutions—a robust notion suited for situations where classical differentiability fails. This framework is essential for studying fully nonlinear elliptic equations, especially when classical solutions are unavailable or hard to obtain.
\begin{definition}[{\bf Viscosity Solutions \cite[Remark 2.2]{JVGN21}}]
A continuous function $u \in C^0(\mathrm{B}_1)$ is said to be a \textit{viscosity subsolution} to \eqref{MainEq} in $\mathrm{B}_1$ if for any $x_{0}\in\mathrm{B}_{1}$ and $\varphi\in C^{2}(\mathrm{B}_{1})$ such that $u - \varphi$ has a local maximum at $x_0$, the following inequality holds:
\[
F( D^2 \varphi(x_0),x_0) + \langle \mathfrak{B}(x_0), D\varphi(x_{0})\rangle \leq f(x_0).
\]
Similarly, $u$ is called a \textit{viscosity supersolution} to \eqref{MainEq} in $\mathrm{B}_1$ if for any $x_{0}\in \mathrm{B}_{1}$ and $\varphi\in C^{2}(\mathrm{B}_{1})$ such that $u - \varphi$ has a local minimum at $x_{0}$, one has:
\[
F(D^2 \varphi(x_0),x_0) + \langle \mathfrak{B}(x_0), D\varphi(x_{0})\rangle \geq f(x_0).
\]
We say that $u$ is a \textit{viscosity solution} to \eqref{MainEq} if it is both a viscosity subsolution and a viscosity supersolution.
\end{definition}
We now state an interior regularity result, which provides a H\"{o}lder estimate for the gradient of viscosity solutions under suitable structural and integrability conditions.
\medskip
\begin{theorem}[{\bf H\"{o}lder Gradient Estimate \cite[Theorem 1.6]{JVGN21}}]\label{C1alpharegularity}
Assume \textnormal{(A1)}–\textnormal{(A3)}. Let $f \in L^p(\Omega)\cap C^0(\Omega)$, where $p > n$, and $\Omega \subset \mathbb{R}^n$ be a bounded domain. Let $u$ be a bounded viscosity solution of \eqref{MainEq}. Then, there exists $\alpha \in (0,1)$ and $\theta = \theta(\alpha)$, depending on $n, p, \lambda, \Lambda$, and $\|\mathfrak{B}\|_{L^{\infty}(\mathrm{B}_1)}$ such that if 
$$
\displaystyle \left(\intav{\mathrm{B}_r(\vec{0})} \left(\tilde{\beta}_F(x)\right)^pdx\right)^{1/p}\leq \theta \quad \text{for} \quad \tilde{\beta}_F(x) \defeq \sup_{\mathrm{X} \in \text{Sym}(n) \atop{\mathrm{X} \neq \mathbf{O}_n}} \frac{|F(\mathrm{X}, x)-F(\mathrm{X}, \vec{0})|}{\|\mathrm{X}\|}
$$
 holds for all $r \leq \min\{r_0(\theta), \operatorname{dist}(\Omega', \partial\Omega)\}$, for some $r_0 = r_0(\theta) > 0$, then $u \in C^{1,\alpha}_{\textnormal{loc}}(\Omega)$, and
\begin{equation} \label{eq:1.9}
    \|u\|_{C^{1,\alpha}(\overline{\Omega'})} \leq \mathrm{C}\, (\left\| u\right\|_{L^{\infty }(\Omega )} + \|f\|_{L^p(\Omega)}), 
\end{equation}
for any subdomain $\Omega' \subset\subset \Omega$, where $\mathrm{C}>0$ depends only on $r_0, n, p, \lambda, \Lambda, \alpha, \operatorname{diam}(\Omega), \|\mathfrak{B}\|_{L^{\infty}(\mathrm{B}_1)}$, and $\operatorname{dist}(\Omega', \partial\Omega)$.
\end{theorem}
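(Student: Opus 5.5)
The estimate is local and invariant under translation, so I would prove it at an interior point, taken to be the origin, and recover \eqref{eq:1.9} by a covering argument. The strategy is Caffarelli's perturbative compactness scheme. Solutions of the frozen, homogeneous problem $F(D^2\mathfrak{h},\vec{0})=0$ enjoy $C^{1,\alpha_{\mathrm{Hom}}}$ estimates. I would transfer this to $u$ by approximation at every scale and then iterate, using affine (not quadratic) polynomials.

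\textbf{Step 1 (normalization).} Set $v(x)=u(rx)/K$ with $K=\|u\|_{L^\infty}+\varepsilon^{-1}\|f\|_{L^p}$. Then $v$ solves
\[
F_r(D^2v,x)+\langle \mathfrak{B}_r(x),Dv\rangle=f_r(x),
\]
where
\[
F_r(\mathrm{X},x)=\tfrac{r^2}{K}F\!\left(\tfrac{K}{r^2}\mathrm{X},rx\right),\qquad \mathfrak{B}_r(x)=r\,\mathfrak{B}(rx),\qquad f_r(x)=\tfrac{r^2}{K}f(rx).
\]
The operator $F_r$ has the same ellipticity constants, and $\|f_r\|_{L^p(\mathrm{B}_1)}\le r^{2-n/p}\varepsilon$. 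The quantity $\tilde\beta_{F_r}$ is the rescaled $\tilde\beta_F$, so its $L^p$ average is at most $\theta$ once $r\le r_0(\theta)$. Choosing $r$ small also gives $\|\mathfrak{B}_r\|_\infty\le\varepsilon$. Thus I may assume $\|v\|_\infty\le1$ and that $\|f\|_{L^p}$, $\|\mathfrak{B}\|_\infty$ and the averaged $\tilde\beta_F$ are all at most $\varepsilon$ (respectively $\theta$).

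\textbf{Step 2 (approximation lemma).} For every $\delta>0$ there is $\varepsilon$ such that, under the normalization of Step 1, some $\mathfrak{h}$ with $F(D^2\mathfrak{h},\vec{0})=0$ in $\mathrm{B}_{3/4}$ satisfies $\|v-\mathfrak{h}\|_{L^\infty(\mathrm{B}_{1/2})}\le\delta$. I would argue by contradiction. Take a sequence of counterexamples $v_j$ with $\varepsilon_j\to0$. The Krylov--Safonov/Escauriaza Hölder estimates with $L^p$ right-hand side ($p>n$, bounded drift) make the $v_j$ equicontinuous. Extract limits $v_j\to v_\infty$ and $F_j(\cdot,\vec 0)\to F_\infty$. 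The stability of $L^p$-viscosity solutions then shows that $v_\infty$ solves $F_\infty(D^2v_\infty,\vec 0)=0$, which is a contradiction. This stability step uses the $L^p$-smallness of $\tilde\beta$, $f$ and $\mathfrak{B}$.

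\textbf{Step 3 (iteration).} Fix $\alpha<\min\{\alpha_{\mathrm{Hom}},1-n/p\}$ and choose $\rho$ so that $C\rho^{1+\alpha_{\mathrm{Hom}}}\le\rho^{1+\alpha}/2$. Then fix $\delta=\rho^{1+\alpha}/2$, and with it $\varepsilon$. I would build affine functions $\ell_k$ satisfying
\[
\sup_{\mathrm{B}_{\rho^k}}|v-\ell_k|\le\rho^{k(1+\alpha)},\qquad |D\ell_{k+1}-D\ell_k|\le C\rho^{k\alpha}.
\]
The inductive step applies Step 2 to
\[
w_k(x)=\frac{(v-\ell_k)(\rho^kx)}{\rho^{k(1+\alpha)}}.
\]
The function $w_k$ solves an equation of the same type. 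Its source is
\[
\rho^{k(1-\alpha)}f(\rho^kx)-\rho^{-k\alpha}\langle\mathfrak{B}(\rho^kx),D\ell_k\rangle,
\]
whose $L^p$ norm scales like $\rho^{k(1-\alpha-n/p)}\varepsilon$ plus a drift contribution, and its drift is $\rho^k\mathfrak{B}(\rho^k x)$. Summing the increments gives $C^{1,\alpha}$ at the origin with a uniform constant. Undoing the normalization and covering $\Omega'$ yields \eqref{eq:1.9}.

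\textbf{Main obstacle.} The difficult point is the drift acting on the affine part, namely the term $\rho^{-k\alpha}\langle\mathfrak{B},D\ell_k\rangle$. This term carries the factor $\rho^{-k\alpha}$ and is not small in an obvious way, and treating it as an $L^p$ source naively breaks the scaling. I would first try to absorb it by not subtracting $\ell_k$ in the rescaled equation. Instead I would keep the operator $\tilde F(\mathrm{X},x)+\langle\tilde{\mathfrak{B}},\xi+D\ell_k\rangle$, exploiting the $\rho^k$ gain in $\tilde{\mathfrak{B}}$. Equivalently, I would compare with the $\mathfrak{h}$ solving the frozen equation that includes the constant term $\langle\mathfrak{B}(0),D\ell_k\rangle$, and note that its effect on the expansion is of order $\rho^{k}\cdot\rho^{-k\alpha}=\rho^{k(1-\alpha)}\to0$, using that $|D\ell_k|$ stays uniformly bounded. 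The smallness of the averaged $\tilde\beta_F$ enters only through stability in Step 2 and does not need to be preserved under the iteration.
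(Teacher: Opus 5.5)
This statement is not proved in the paper; it is quoted from \cite{JVGN21}, where it rests on the standard Caffarelli--\'{S}wi\c{e}ch scheme: approximation by solutions of a frozen homogeneous equation, followed by an affine iteration. Your outline is that scheme, so the route is right. However, Step~3 contains a scaling error, and that error is what produces your ``main obstacle''.

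\textbf{The drift term.} Write $\ell_k(x)=a_k+\langle b_k,x\rangle$. Then $D^2v(\rho^kx)=\rho^{-k(1-\alpha)}D^2w_k(x)$ and $Dv(\rho^kx)=\rho^{k\alpha}Dw_k(x)+b_k$. Multiplying the equation by $\rho^{k(1-\alpha)}$ gives
\[
F_k(D^2w_k,x)+\langle\rho^k\mathfrak{B}(\rho^kx),Dw_k\rangle=\rho^{k(1-\alpha)}\bigl(f(\rho^kx)-\langle\mathfrak{B}(\rho^kx),b_k\rangle\bigr),\quad F_k(\mathrm{X},x)=\rho^{k(1-\alpha)}F\bigl(\rho^{-k(1-\alpha)}\mathrm{X},\rho^kx\bigr).
\]
So the affine part of the drift carries the factor $\rho^{k(1-\alpha)}\le 1$, not $\rho^{-k\alpha}$. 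You dropped the factor $\rho^k$ of the rescaled drift; your closing computation $\rho^k\cdot\rho^{-k\alpha}$ is the correct one and contradicts the source you displayed.

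Since $|b_k|\le C\sum_j\rho^{j\alpha}\le C(1-\rho^\alpha)^{-1}=:C_\rho$, and $\rho$ is fixed before $\varepsilon$, this term is an $L^\infty$ source bounded by $C_\rho\|\mathfrak{B}\|_{L^\infty}$. Requiring $\|\mathfrak{B}\|_{L^\infty}\le\varepsilon/(2C_\rho)$ in Step~1, which your choice of $r$ allows, makes it an admissible small source in Step~2 together with $f$. Nothing needs to be built into the frozen equation.

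Had the factor really been $\rho^{-k\alpha}$, your proposed repair would not have worked. A frozen constant of that size ruins the uniform $C^{1,\alpha_{\mathrm{Hom}}}$ bounds. The remainder $\rho^{-k\alpha}\langle\mathfrak{B}(\rho^kx)-\mathfrak{B}(0),b_k\rangle$ would also require a modulus of continuity for $\mathfrak{B}$, which is not assumed.

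\textbf{The smallness of $\tilde\beta_F$ is needed at every step.} Contrary to your last sentence, it must hold at every scale, because Step~2 is applied to each $w_k$. Since $\tilde\beta$ is normalized by $\|\mathrm{X}\|$, it is invariant under $\mathrm{X}\mapsto s\mathrm{X}$. Hence $\tilde\beta_{F_k}(x)=\tilde\beta_F(r\rho^kx)$, and the $L^p$-average of $\tilde\beta_{F_k}$ over $\mathrm{B}_1$ equals the average of $\tilde\beta_F$ over $\mathrm{B}_{r\rho^k}$. This is exactly why the hypothesis is imposed for all $r\le r_0(\theta)$. It is also why $\theta$ must be the $\varepsilon$ of Step~2: your contradiction argument needs these averages to tend to zero, so $\theta$ cannot be a separate fixed constant in Step~1.

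\textbf{The covering argument.} Covering $\Omega'$ needs the same condition centred at every $x_0\in\Omega'$, with $\tilde\beta$ computed relative to $F(\cdot,x_0)$. As transcribed in the paper, the condition is centred only at $\vec 0$, which does not yield an estimate on $\Omega'$. You should state it pointwise, as in the standard formulation; otherwise the translation invariance you invoke at the outset is not available.

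\textbf{A minor point in Step~2.} The compactness argument produces $v_\infty$ solving $F_\infty(D^2v_\infty)=0$, not $F(D^2\mathfrak{h},\vec 0)=0$. It suffices to state the lemma for $\mathfrak{h}$ solving some $G(D^2\mathfrak{h})=0$, with $G$ $(\lambda,\Lambda)$-elliptic and $G(\mathbf{O}_n)=0$. The iteration only uses uniform $C^{1,\alpha_{\mathrm{Hom}}}$ bounds, so this is enough.
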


\medskip

Finally, recall that viscosity solutions satisfy the following stability property (see, for instance, \cite[Proposition 4.11]{CafCabre1995}) and  \cite[Theorem 3.8]{CCKS}.

\begin{proposition}\label{Stability-Prop}
Let $(F_k)_{k \in \mathbb{N}}$ be a sequence of fully nonlinear elliptic operators with ellipticity constants $0< \lambda \leq \Lambda$. Let $(u_k)_{k \in \mathbb{N}} \subset C^0(\Omega)$ be viscosity solutions to
\[
F_k(D^2 u_k, x) +\langle \mathfrak{B}_k(x),Du_k\rangle = f_k(x) \quad \text{in } \Omega,
\]
where $(f_k)_{k \in \mathbb{N}}$ is a sequence of continuous functions and $(\mathfrak{B}_k)_{k \in \mathbb{N}}$ is a sequence of continuous vector fields. Suppose further that $F_k \to F$ locally uniformly in $\text{Sym}(n)$, and that $u_k\to u$ locally uniformly in $\Omega$, $\mathfrak{B}_k\to \mathfrak{B}$ and $f_k \to f$ a.e. in $\Omega$. Then,
\[
F(D^2 u, x) +\langle \mathfrak{B}(x),Du\rangle = f(x) \quad \text{in } \Omega
\]
in the viscosity sense.
\end{proposition}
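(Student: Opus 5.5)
We prove Proposition~\ref{Stability-Prop} through the standard test-function argument of \cite[Proposition 4.11]{CafCabre1995}, applied to the operators $G_k(\mathrm{X},\xi,x)\defeq F_k(\mathrm{X},x)+\langle \mathfrak{B}_k(x),\xi\rangle-f_k(x)$. The plan is to verify the viscosity subsolution inequality for the limit $u$; the supersolution inequality follows in the same way. We interpret the hypotheses so that $F_k\to F$ locally uniformly on $\text{Sym}(n)\times\Omega$, and we take the convergence of $\mathfrak{B}_k, f_k$ to hold locally uniformly, or at least uniformly near each contact point. A.e. convergence alone does not control pointwise values at contact points. For continuous viscosity solutions one therefore either upgrades it, or passes to the $L^p$-viscosity framework of \cite[Theorem 3.8]{CCKS}, where $\|f_k-f\|_{L^{p}}\to0$ and $\|\mathfrak{B}_k-\mathfrak{B}\|_{L^\infty}\to 0$ suffice.

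Fix $x_0\in\Omega$ and $\varphi\in C^2$ with $u-\varphi$ having a local maximum at $x_0$. Replacing $\varphi$ by $\varphi+|x-x_0|^4$, we may assume the maximum is strict on some $\overline{\mathrm{B}_r(x_0)}\subset\Omega$; this does not change $D\varphi(x_0)$ or $D^2\varphi(x_0)$. Let $x_k$ be a maximum point of $u_k-\varphi$ on $\overline{\mathrm{B}_r(x_0)}$. Since $u_k\to u$ uniformly there and the maximum of $u-\varphi$ is strict, every limit point of $(x_k)$ is a maximum point of $u-\varphi$, hence equals $x_0$. Thus $x_k\to x_0$, and for large $k$ the point $x_k$ is interior, so it is a local maximum of $u_k-\varphi$. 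The definition of viscosity subsolution for $u_k$ then gives
\[
F_k(D^2\varphi(x_k),x_k)+\langle \mathfrak{B}_k(x_k),D\varphi(x_k)\rangle\le f_k(x_k).
\]

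Next we pass to the limit. We write $F_k(D^2\varphi(x_k),x_k)-F(D^2\varphi(x_0),x_0)$ as
\[
\bigl[F_k-F\bigr](D^2\varphi(x_k),x_k)+\bigl[F(D^2\varphi(x_k),x_k)-F(D^2\varphi(x_0),x_0)\bigr].
\]
The first bracket tends to zero by local uniform convergence on the compact set $\{D^2\varphi(y):y\in\overline{\mathrm{B}_r}\}\times\overline{\mathrm{B}_r}$. The second tends to zero because $F$ is Lipschitz in $\mathrm{X}$ by (A1) and continuous in $x$ (via $\theta_F$ continuous, or as a uniform limit of continuous functions). The drift and source terms are handled in the same way, using $|\mathfrak{B}_k(x_k)-\mathfrak{B}(x_0)|\le\|\mathfrak{B}_k-\mathfrak{B}\|_{L^\infty(\mathrm{B}_r)}+|\mathfrak{B}(x_k)-\mathfrak{B}(x_0)|$ and the continuity of $D\varphi$. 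Letting $k\to\infty$ yields $F(D^2\varphi(x_0),x_0)+\langle\mathfrak{B}(x_0),D\varphi(x_0)\rangle\le f(x_0)$.

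The main obstacle is the weak mode of convergence of the data (a.e.). In the uniform setting above the argument is routine. If only a.e. or $L^{p}$ convergence is available, we would instead invoke \cite[Theorem 3.8]{CCKS}, which argues by contradiction. Assuming the inequality fails strictly in a small ball for the limiting data, we would solve the Pucci-type equation $\mathscr{P}^+(D^2w)+|\mathfrak{B}_k||Dw|=g_k$, with $g_k$ built from the $L^p$ error. The ABP estimate (valid since $p>n$) gives $w_k\to0$, and comparing $u_k-\varphi-w_k$ then contradicts the strict local maximum. This is the only delicate point. In our applications, namely the compactness arguments of Lemma~\ref{aproxlemma}, the rescaled data converge to zero in the $L^{p_0}$-averaged sense of (A3). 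The $L^p$-viscosity version is therefore the one actually used.
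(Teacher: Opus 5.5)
Your argument is correct and is exactly the standard one behind the references the paper cites in place of a proof: the test-function argument of \cite[Proposition 4.11]{CafCabre1995} under locally uniform convergence, and \cite[Theorem 3.8]{CCKS} in the $L^{p}$-viscosity setting that Lemma~\ref{aproxlemma} actually uses. You are also right that the hypotheses must be corrected rather than merely interpreted, with one slip: \cite{CCKS} also needs $L^{p}$ convergence (obtainable, e.g., from a.e.\ convergence plus a uniform bound), so a.e.\ convergence alone cannot be handled there either, and the statement as written is false: in one dimension the classical solutions of $u_k''=f_k$, $u_k(0)=u_k'(0)=0$, with $f_k(x)=k\max\{0,1-k|x|\}\to0$ a.e., converge uniformly to $|x|/2$, which is touched from below at $0$ by $x^{2}$ and hence is not a viscosity solution of $u''=0$.
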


\section{Geometric Tangential Approach}

In what follows, under suitable smallness assumptions on the data, we establish a key result that provides a tangential approach to the regularity theory available for constant-coefficient, homogeneous \( F \)-harmonic functions (which enjoy higher regularity estimates).

\begin{lemma}[{\bf \( F \)-harmonic approximation lemma}]\label{aproxlemma}
Let \( \hat{\tau}:[0, \infty) \to [0, \infty)\) be a modulus of continuity satisfying the limiting compatibility condition
\begin{equation}\label{LCC}\tag{LCC}
\lim_{t \to 0^{+}} \frac{\hat{\tau}(t)}{t} = +\infty,
\end{equation}
and let \( u \in \mathscr{F} \). Then, there exists a positive constant \( \eta = \eta(n,p_{0}, \lambda, \Lambda, \omega, \hat{\tau}) \) such that, for any \( \mu > 0 \), if \( u \) is a normalized viscosity solution (i.e., $\|u\|_{L^{\infty}(\mathrm{B}_1)} \leq 1$) of
\[
\frac{1}{\mu} F(\mu D^{2} u, x) + \langle \mathfrak{B}(x), D u \rangle = f(x) \quad \text{in} \quad \mathrm{B}_{1},
\]
and
\[
\max \left\{ \mu, \| \theta_{F} \|_{L^{p_{0}}(\mathrm{B}_{1})}, \| \mathfrak{B} \|_{L^{\infty}(\mathrm{B}_{1}; \mathbb{R}^{n})}, |u(0)|, |D u(0)|, \| f \|_{L^{p_{0}}(\mathrm{B}_{1})} \right\} \leq \eta,
\]
then there exists a number \( \rho \in (0, 1/2) \), depending only on \( n \), \( \lambda \), \( \Lambda \), and \( \hat{\tau} \), and a quadratic polynomial \( \mathfrak{P} \) of the form \( \mathfrak{P}(x) = \frac{1}{2} x^{T} \mathrm{M} x \), with
\[
\frac{1}{\mu} F( \mu D^{2} \mathfrak{P}, 0 ) = 0 \quad \text{in} \quad \mathrm{B}_{1/2},
\]
and \( \| \mathrm{M} \| \leq \mathrm{C}(n, \lambda, \Lambda) \), such that
\[
\sup_{\mathrm{B}_{\rho}} | u - \mathfrak{P} | \leq \rho^{2} \hat{\tau}(\rho).
\]
\end{lemma}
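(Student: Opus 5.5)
The plan is a compactness (\emph{Reductio ad Absurdum}) argument, with $\rho$ fixed \emph{before} arguing by contradiction. A limiting profile will solve a linear constant-coefficient equation $\mathrm{Tr}(\mathfrak{A}_\infty D^2\mathfrak{h})=0$ with $\lambda\,\mathrm{Id}_n\le \mathfrak{A}_\infty\le\Lambda\,\mathrm{Id}_n$. Any such $\mathfrak{h}$ with $\|\mathfrak{h}\|_{L^\infty(\mathrm{B}_{3/4})}\le 1$ satisfies
\[
\sup_{\mathrm{B}_r}|\mathfrak{h}-\mathfrak{h}(0)-D\mathfrak{h}(0)\cdot x-\tfrac12 x^TD^2\mathfrak{h}(0)x|\le \mathrm{C}_1 r^3
\]
and $\|D^2\mathfrak{h}(0)\|\le \mathrm{C}_1$, with $\mathrm{C}_1=\mathrm{C}_1(n,\lambda,\Lambda)$. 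By \eqref{LCC}, $\hat\tau(t)/t\to\infty$, so I can choose $\rho\in(0,1/2)$ with $\mathrm{C}_1\rho^3\le \tfrac14\rho^2\hat\tau(\rho)$, i.e. $\mathrm{C}_1\rho\le \hat\tau(\rho)/4$. This $\rho$ depends only on $n,\lambda,\Lambda,\hat\tau$.

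Suppose the lemma fails. Then there are $\mu_k$, $F_k$, $\mathfrak{B}_k$, $f_k$, and normalized $u_k$ with all quantities in the max bounded by $1/k$, but $\sup_{\mathrm{B}_\rho}|u_k-\mathfrak{P}|>\rho^2\hat\tau(\rho)$ for every admissible $\mathfrak{P}$. Set $\mathcal{G}_k(\mathrm{X},x)=\mu_k^{-1}F_k(\mu_k\mathrm{X},x)$; these have the same ellipticity constants.

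\textbf{Compactness.} Since $\|\mathfrak{B}_k\|_\infty\le 1$ and $\|f_k\|_{L^{p_0}}\le1$, and $\theta_{\mathcal{G}_k}$ is small in $L^{p_0}$ (the same smallness carries over for $\tilde\beta$ after using $F(\mathbf O_n,x)=0$), Theorem~\ref{C1alpharegularity} gives a uniform $C^{1,\alpha}(\overline{\mathrm{B}}_{3/4})$ bound. Arzel\`a--Ascoli then yields $u_k\to u_\infty$ in $C^1(\mathrm{B}_{3/4})$, with $u_\infty(0)=0$ and $Du_\infty(0)=0$.

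\textbf{Limit operator.} By (A2) and $F_k(\mathbf{O}_n,0)=0$,
\[
\mathcal{G}_k(\mathrm{X},0)=\int_0^1 \mathrm{D}F_k(t\mu_k\mathrm{X},0)\,dt\,(\mathrm{X}) = \mathrm{Tr}(\mathfrak{A}_k\mathrm{X})+O(\omega(\mu_k\|\mathrm{X}\|)\|\mathrm{X}\|),
\]
where $\mathfrak{A}_k=\mathrm{D}F_k(\mathbf{O}_n,0)\in\mathfrak{A}_{\lambda,\Lambda}$. Along a subsequence $\mathfrak{A}_k\to\mathfrak{A}_\infty$, so $\mathcal{G}_k(\cdot,0)\to\mathrm{Tr}(\mathfrak{A}_\infty\,\cdot)$ locally uniformly in $\mathrm{Sym}(n)$, with $\omega$ uniform because it is fixed.

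\textbf{Main obstacle.} The delicate step is passing to the limit despite the $x$-dependence. The $x$-dependence is controlled only in $L^{p_0}$, via $\theta$, and $f_k\to0$ only in $L^{p_0}$, so Proposition~\ref{Stability-Prop} does not apply verbatim. I would rewrite the equation as $\mathcal{G}_k(D^2u_k,0)+\langle\mathfrak{B}_k,Du_k\rangle=\tilde f_k$, where $|\tilde f_k-f_k|\le \theta_{\mathcal{G}_k}(x,0)(1+\|D^2u_k\|)$ in the viscosity sense. I would then invoke the $L^{p}$-viscosity stability of \cite[Theorem 3.8]{CCKS}, which only needs $\|\theta\|_{L^{p_0}}\to0$, $\|f_k\|_{L^{p_0}}\to0$ and $\mathfrak{B}_k\to0$ uniformly. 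This yields $\mathrm{Tr}(\mathfrak{A}_\infty D^2u_\infty)=0$ in $\mathrm{B}_{3/4}$.

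\textbf{Conclusion.} Since $u_\infty(0)=0$ and $Du_\infty(0)=0$, take $\mathrm{M}_\infty=D^2u_\infty(0)$, so $\|\mathrm{M}_\infty\|\le\mathrm{C}_1$ and $\sup_{\mathrm{B}_\rho}|u_\infty-\tfrac12x^T\mathrm{M}_\infty x|\le\tfrac14\rho^2\hat\tau(\rho)$.

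The polynomial must satisfy $\mathcal{G}_k(\mathrm{M},0)=0$ exactly, so I correct $\mathrm{M}_\infty$ to $\mathrm{M}_k=\mathrm{M}_\infty+t_k\mathrm{Id}_n$. Since $\mathrm{Tr}(\mathfrak{A}_\infty\mathrm{M}_\infty)=0$ and $\mathcal{G}_k(\cdot,0)\to\mathrm{Tr}(\mathfrak{A}_\infty\,\cdot)$, we have $\mathcal{G}_k(\mathrm{M}_\infty,0)\to0$. Ellipticity (A1) gives monotonicity in $t$ with slope at least $n\lambda$, so some $t_k$ with $|t_k|\le|\mathcal{G}_k(\mathrm{M}_\infty,0)|/(n\lambda)\to0$ works.

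Then, for $k$ large, $\sup_{\mathrm{B}_\rho}|u_k-\mathfrak{P}_k|\le \|u_k-u_\infty\|_\infty+\tfrac14\rho^2\hat\tau(\rho)+|t_k|\rho^2<\rho^2\hat\tau(\rho)$. This contradicts the choice of the sequence, and the bound $\|\mathrm{M}_k\|\le \mathrm{C}_1+1=\mathrm{C}(n,\lambda,\Lambda)$ holds.
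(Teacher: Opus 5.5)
Your proposal is correct and follows essentially the same route as the paper. Both argue by contradiction and compactness, using Theorem~\ref{C1alpharegularity} and stability to reach a constant-coefficient limit $\mathrm{Tr}(\mathfrak{A}_\infty D^2u_\infty)=0$, choose $\rho$ from \eqref{LCC} and the cubic Taylor bound, and then correct $\tfrac12 x^TD^2u_\infty(0)x$ by a vanishing multiple of $|x|^2$ so the polynomial solves the $\mu_k$-rescaled equation exactly. Your version is somewhat more explicit than the paper's in fixing $\rho$ beforehand, identifying the limit operator through $\mathfrak{A}_k=\mathrm{D}F_k(\mathbf{O}_n,0)$ and (A2), invoking $L^{p}$-viscosity stability for the $x$-dependence, and bounding the correction by $|\mathcal{G}_k(\mathrm{M}_\infty,0)|/(n\lambda)$; note, however, that the $L^{p_0}$-smallness of $\theta_{\mathcal{G}_k}$ and $\tilde\beta$ does not follow from $F(\mathbf{O}_n,x)=0$ alone but needs the uniform modulus $\omega$ in (A2) together with $\|\theta_{F_k}\|_{L^{p_0}}\to 0$.
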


\begin{proof}
We prove the result by means of a \textit{Reductio ad Absurdum} argument. In this case, for some \(0 < \rho_{0} < 1\) to be chosen \textit{a posteriori}, we can find sequences \((u_{j})_{j \in \mathbb{N}}\), \((\mathfrak{B}_{j})_{j \in \mathbb{N}}\), \((f_{j})_{j \in \mathbb{N}}\), \((\mu_{j})_{j \in \mathbb{N}} \subset (0,\infty)\), and \((F_{j})_{j \in \mathbb{N}} \subset \mathscr{F}\) such that \(u_{j}\) is a normalized viscosity solution of
\begin{equation}\label{eq1lemmaaprox}
\frac{1}{\mu_{j}} F_{j}(\mu_{j} D^{2} u_{j}, x) + \langle \mathfrak{B}_{j}(x), D u_{j} \rangle = f_{j}(x) \quad \text{in} \quad \mathrm{B}_{1},
\end{equation}
where
\begin{equation}\label{eq2lemmaaprox}
\max \left\{ \mu_{j}, \| \theta_{F_{j}} \|_{L^{p_{0}}(\mathrm{B}_{1})}, \| \mathfrak{B}_{j} \|_{L^{\infty}(\mathrm{B}_{1}; \mathbb{R}^{n})}, |u_{j}(0)|, |D u_{j}(0)|, \| f_{j} \|_{L^{p_{0}}(\mathrm{B}_{1})} \right\} \leq \frac{1}{j},
\end{equation}
however, for every quadratic polynomial \(\mathfrak{P}\) of the form \(\mathfrak{P}(x) = \frac{1}{2} x^{T} \mathfrak{M} x\) satisfying
\begin{equation*}
\frac{1}{\mu_{j}} F_{j}(\mu_{j} D^{2} \mathfrak{P}, 0) = 0 \quad \text{in} \quad \mathrm{B}_{1/2},
\end{equation*}
it holds that
\begin{equation}\label{eq3lemmaaprox}
\sup_{\mathrm{B}_{\rho_{0}}} |u_{j} - \mathfrak{P}| > \rho_{0}^{2} \hat{\tau}(\rho_{0}).
\end{equation}

Since \(F_{j} \in \mathscr{F}\), by passing to a subsequence if necessary, we may assume that \(F_{j}(\mathrm{X}, x) \to F_{\infty}(\mathrm{X}, x)\) locally uniformly in \(\mathrm{Sym}(n)\) for all \(x \in \mathrm{B}_{1}\), for some operator \(F_{\infty} \in C^{1}\) that is uniformly elliptic. It follows that
\begin{equation}\label{eq4lemmaaprox}
\frac{1}{\mu_{j}} F_{j}(\mu_{j} \mathrm{X}, x) \to D_{\mathrm{M}} F_{\infty}(\mathbf{O}_n) \mathrm{X} = \operatorname{Tr}(\mathfrak{A} \mathrm{X}),
\end{equation}
locally uniformly in \(\mathrm{Sym}(n)\); see, e.g., \cite{João,dosPrazTei2016} for further details. Moreover, from \eqref{eq2lemmaaprox}, we obtain that \(\mathfrak{B}_{j} \to 0\), \(\mu_{j} \to 0\), and \(f_{j} \to 0\) in \(L^{p_{0}}\)-norm. Also, by the \(C^{1, \alpha}\)-regularity (Theorem \ref{C1alpharegularity}) for solutions to \eqref{eq1lemmaaprox}, up to a subsequence, we have \(u_{j} \to u_{\infty} \in C^{1, \alpha}_{\mathrm{loc}}(\mathrm{B}_{1})\) locally uniformly in \(\mathrm{B}_{1}\), with \(u_{\infty}(0) = |D u_{\infty}(0)| = 0\).

By the stability of viscosity solutions (Proposition \ref{Stability-Prop}), we conclude that \(u_{\infty}\) solves
\begin{equation}\label{eq5lemmaaprox}
\mathrm{Tr}(\mathfrak{A}_0D^{2} u_{\infty}) = D_{\mathrm{M}} F_{\infty}(\mathbf{O}_n, 0) D^{2} u_{\infty} = 0 \quad \text{in} \quad \mathrm{B}_{3/4}
\end{equation}
in the viscosity sense. Since this equation has constant coefficients and linear elliptic structure, \(u_{\infty}\) is smooth. We define
\[
\overline{\mathfrak{P}}(x) = \frac{1}{2} x^{T} D^{2} u_{\infty}(0) x.
\]
As \(u_{\infty}\) is normalized (since \(\|u_{j}\|_{L^{\infty}(\mathrm{B}_{1})} \leq 1\) for all \(j \in \mathbb{N}\)), it follows from standard estimates for \(u_{\infty}\) (see \cite[Theorem 2.5.2]{Kry96} and \cite[Lemma 3.1]{João}) that
\begin{equation}\label{eq6lemmaaprox}
\sup_{\mathrm{B}_{r}} |u_{\infty} - \overline{\mathfrak{P}}| \leq \mathrm{C}_{\mathrm{H}} r^{3}, \quad \forall r \in \left(0, \frac{1}{2}\right),
\end{equation}
for some universal constant \(\mathrm{C}_{\mathrm{H}} > 0\). This, combined with the assumption on the modulus of continuity \(\hat{\tau}\), i.e., \eqref{LCC}, ensures the existence of a constant \(0 < \rho_{0} \ll \frac{1}{2}\) such that
\[
\rho_{0} \leq \frac{1}{3 \mathrm{C}_{\mathrm{H}}} \hat{\tau}(\rho_{0}).
\]
Thus, from \eqref{eq6lemmaaprox}, we obtain
\begin{equation}\label{eq7lemmaaprox}
\sup_{\mathrm{B}_{\rho_{0}}} |u_{\infty} - \overline{\mathfrak{P}}| \leq \frac{1}{3} \rho_{0}^{2} \hat{\tau}(\rho_{0}).
\end{equation}

Now, from \eqref{eq5lemmaaprox}, we conclude that
\[
D_{\mathrm{M}} F_{\infty}(\mathbf{O}_n, 0) D^{2} \overline{\mathfrak{P}} = 0 \quad \text{in} \quad \mathrm{B}_{1/2},
\]
and thus,
\[
\left| \frac{1}{\mu_{j}} F_{j}(\mu_{j} D^{2} \overline{\mathfrak{P}}, 0) \right| = o(1) \quad \text{as} \quad j \to \infty.
\]
By applying the arguments from \cite[Lemma 3.1]{dosPrazTei2016} and \cite[Lemma 1]{João}, the uniform ellipticity of \(F_{j}\) and the normalization \(F_{j}(\mathbf{O}_n, 0) = 0\) imply the existence of a sequence \((a_{j})_{j \in \mathbb{N}}\) with \(|a_{j}| = o(1)\) as \(j \to \infty\) such that, for each \(j\), the quadratic polynomial
\[
\mathfrak{P}_{j}(x) = \overline{\mathfrak{P}}(x) + \frac{1}{2} a_{j} |x|^{2}
\]
satisfies
\[
\frac{1}{\mu_{j}} F_{j}(\mu_{j} D^{2} \mathfrak{P}_{j}, 0) = 0 \quad \text{in} \quad \mathrm{B}_{1/2}.
\]
Thus, using \eqref{eq7lemmaaprox}, the definition of \(\mathfrak{P}_{j}\), and the uniform convergence \(u_{j} \to u_{\infty}\), we deduce
\begin{align*}
\sup_{\mathrm{B}_{\rho_{0}}} |u_{j} - \mathfrak{P}_{j}| & \leq \sup_{\mathrm{B}_{\rho_{0}}} |u_{j} - u_{\infty}| + \sup_{\mathrm{B}_{\rho_{0}}} |u_{\infty} - \overline{\mathfrak{P}}| + \sup_{\mathrm{B}_{\rho_{0}}} |\mathfrak{P}_{j} - \overline{\mathfrak{P}}| \\
& \leq \frac{2}{3} \rho_{0}^{2} \hat{\tau}(\rho_{0}) + \frac{1}{2} |a_{j}| \rho_{0}^{2} \\
& \leq \rho_{0}^{2} \hat{\tau}(\rho_{0})
\end{align*}
for \(j \gg 1\), which contradicts \eqref{eq3lemmaaprox} and concludes the proof.
\end{proof}

\begin{remark}
In Lemma~\ref{aproxlemma}, it is sufficient to assume that \(F(\mathbf{O}_{n},x)=0\) at \(x=0\), due to the pointwise nature of the estimates.
Accordingly, whenever this result is invoked, it suffices that the operator
\(F:\mathrm{Sym}(n)\times \mathrm{B}_{1}\to\mathbb{R}\)
satisfies this condition after normalization; see Proposition~\ref{Prop5.1} for details on the normalization procedure.
\end{remark}

Next, we apply the approximation tool for normalized solutions under a smallness condition on the \(L^{\infty}\)-norm of the solution. In summary, we obtain the following result.

\begin{lemma}[{\bf First step of induction}]\label{approxlemmadelta}
Let \(\hat{\tau}: [0, \infty) \to [0, \infty)\) be a modulus of continuity satisfying \eqref{LCC}
and let \(F \in \mathscr{F}\). Then, there exists a constant \(0 < \delta \ll 1\), depending only on \(n\), \(p_{0}\), \(\lambda\), \(\Lambda\), \(\omega\), and \(\hat{\tau}\), such that if \(u\) is a viscosity solution of
\[
F(D^{2} u, x) + \langle \mathfrak{B}(x), D u \rangle = f(x) \quad \text{in} \quad \mathrm{B}_{1},
\]
and
\[
\|u\|_{L^{\infty}(\mathrm{B}_{1})} \leq \delta \quad \text{and} \quad \max \left\{ \| \theta_{F} \|_{L^{p_{0}}(\mathrm{B}_{1})}, \| \mathfrak{B} \|_{L^{\infty}(\mathrm{B}_{1}; \mathbb{R}^{n})}, |u(0)|, |D u(0)|, \| f \|_{L^{p_{0}}(\mathrm{B}_{1})} \right\} \leq \sqrt{\delta^{3}},
\]
then there exist a radius \(\rho \in (0, 1/2)\), depending only on \(n\), \(p_{0}\), \(\lambda\), \(\Lambda\), and \(\hat{\tau}\), and a quadratic polynomial \(\mathfrak{P} : \mathrm{B}_{1/2} \to \mathbb{R}\) of the form \(\mathfrak{P}(x) = \frac{1}{2} x^{T} \mathrm{M} x\), satisfying
\[
F(D^{2} \mathfrak{P}, 0) = 0 \quad \text{in} \quad \mathrm{B}_{1/2}
\]
and \(\| \mathrm{M} \| \leq \mathrm{C}_{\mathscr{H}}(n,p_{0} \lambda, \Lambda) \delta\), such that
\[
\sup_{\mathrm{B}_{\rho}} |u - \mathfrak{P}| \leq \delta \rho^{2} \hat{\tau}(\rho).
\]
\end{lemma}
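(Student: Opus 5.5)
The plan is to reduce Lemma~\ref{approxlemmadelta} to Lemma~\ref{aproxlemma} by normalizing, taking \(\mu=\delta\). Define
\[
v(x)\defeq \frac{u(x)}{\delta}, \qquad G(\mathrm{X},x)\defeq \frac{1}{\delta}F(\delta \mathrm{X},x).
\]
Then \(\|v\|_{L^\infty(\mathrm{B}_1)}\le 1\), and \(v\) is a viscosity solution of
\[
\frac{1}{\delta}F(\delta D^2 v,x)+\langle \mathfrak{B}(x),Dv\rangle=\frac{f(x)}{\delta}\quad\text{in}\quad \mathrm{B}_1 .
\]
This is checked directly from the definition of viscosity solution: a test function \(\varphi\) touching \(v\) corresponds to \(\delta\varphi\) touching \(u\). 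The operator \(G\) keeps the ellipticity constants \(\lambda,\Lambda\) and satisfies \(G(\mathbf{O}_n,x)=0\). Its Gâteaux derivative is \(\mathrm{D}_{\mathrm{M}}F(\delta\mathrm{X},x)\), so it has modulus \(\omega(\delta\,\cdot)\le\omega(\cdot)\) for \(\delta\le1\). Hence the rescaled data stay in the class \(\mathscr{F}\) with unchanged structural constants.

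Next I verify the smallness hypotheses of Lemma~\ref{aproxlemma} with \(\mu=\delta\). Let \(\eta=\eta(n,p_0,\lambda,\Lambda,\omega,\hat\tau)\) be the constant from that lemma. We have:
\begin{itemize}
\item \(|v(0)|,\ |Dv(0)|\le \delta^{-1}\sqrt{\delta^3}=\sqrt{\delta}\);
\item \(\|f/\delta\|_{L^{p_0}(\mathrm{B}_1)}\le\sqrt{\delta}\);
\item \(\|\mathfrak{B}\|_{L^\infty(\mathrm{B}_1)}\le \delta^{3/2}\le\sqrt\delta\);
\item the oscillation of \(G\) satisfies \(\theta_G(x)=\sup_{\mathrm{X}}\frac{|F(\delta\mathrm{X},x)-F(\delta\mathrm{X},0)|}{\delta(1+\|\mathrm{X}\|)}\). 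Substituting \(\mathrm{Y}=\delta\mathrm{X}\) gives \(\frac{1+\|\mathrm{Y}\|}{\delta+\|\mathrm{Y}\|}\le \delta^{-1}\), hence \(\theta_G\le \delta^{-1}\theta_F\) and \(\|\theta_G\|_{L^{p_0}}\le\sqrt\delta\).
\end{itemize}
So choosing \(\delta\le\min\{\eta,\eta^2,1\}\) gives \(\max\{\mu,\dots\}\le\eta\). The exponent \(3/2\) in \(\sqrt{\delta^3}\) is exactly what leaves a factor \(\sqrt\delta\) after dividing by \(\delta\); I expect this to be the main bookkeeping point, especially the \(\theta\) estimate.

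Lemma~\ref{aproxlemma} then yields \(\rho\in(0,1/2)\), depending only on the stated quantities, and \(\tilde{\mathfrak{P}}(x)=\frac12x^T\tilde{\mathrm{M}}x\) with \(\|\tilde{\mathrm{M}}\|\le \mathrm{C}(n,\lambda,\Lambda)\). It also gives \(\frac1\delta F(\delta\tilde{\mathrm{M}},0)=0\) and \(\sup_{\mathrm{B}_\rho}|v-\tilde{\mathfrak{P}}|\le\rho^2\hat\tau(\rho)\). Set \(\mathfrak{P}=\delta\tilde{\mathfrak{P}}\), so \(\mathrm{M}=\delta\tilde{\mathrm{M}}\). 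Then \(F(D^2\mathfrak{P},0)=F(\delta\tilde{\mathrm{M}},0)=0\) and \(\|\mathrm{M}\|\le \mathrm{C}_{\mathscr H}\delta\). Multiplying the estimate by \(\delta\) gives \(\sup_{\mathrm{B}_\rho}|u-\mathfrak{P}|\le\delta\rho^2\hat\tau(\rho)\), as required.
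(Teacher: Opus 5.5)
Your proposal is correct and follows the paper's proof: you normalize $v=u/\delta$, apply Lemma~\ref{aproxlemma} with $\mu=\delta$, and choose $\delta$ so that the leftover factor $\sqrt{\delta}$ lies below $\eta$; your choice $\delta\le\min\{\eta,\eta^{2},1\}$ is a cleaner version of the paper's $\delta=\min\{\eta^{2},\eta^{1/3}\}$. Your extra bound $\theta_G\le\delta^{-1}\theta_F$ is valid but not needed, since Lemma~\ref{aproxlemma} only requires smallness of $\theta_F$ for the unscaled operator, and that is already $\le\delta^{3/2}\le\eta$.
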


\begin{proof}
Fix \(\delta > 0\) to be chosen \textit{a posteriori}, and define the normalized function \(w(x) = \delta^{-1} u(x)\). It is straightforward to verify that \(w\) satisfies
\[
\frac{1}{\delta} F(\delta D^{2} w, x) + \langle \mathfrak{B}(x), D w \rangle = f_{\delta}(x) \quad \text{in} \quad \mathrm{B}_{1},
\]
in the viscosity sense, where \(f_{\delta}(x) = \delta^{-1} f(x)\). Given the constant \(\eta\) from Lemma \ref{aproxlemma}, the desired result holds by choosing
\[
\delta = \min \left\{ \eta^{2}, \eta^{1/3} \right\}.
\]
\end{proof}

\section{$C^{2, \text{Dini}}$ estimates: Proof of Theorem \ref{MainThm01}}

In this section, we present the proof of Theorem \ref{MainThm01}. To this end, we aim to establish the desired conclusion by iteratively applying the result of Lemma \ref{approxlemmadelta} through an induction argument to obtain pointwise estimates. Subsequently, we employ a standard covering argument to complete the proof.

\begin{proposition}\label{Prop5.1}
Assume that conditions \((\mathrm{A1})-(\mathrm{A4})\) hold. Then, there exists a constant \(\delta>0\), depending only on \(n\), \(p_{0}\), \(\lambda\), \(\Lambda\), \(\omega\), and \(\tau\), such that if
\[
\|u\|_{L^{\infty}(\mathrm{B}_{1})} \leq \delta,
\]
then \(u \in C^{2,\psi}\) at the origin. That is, there exists a quadratic polynomial \(\mathfrak{P}\) satisfying
\[
\|u - \mathfrak{P}\|_{L^{\infty}(\mathrm{B}_{r})} \leq \mathrm{C}_0 \delta\, r^{2} \psi(r), \quad \text{for } r \in (0, 1/2),
\]
where \(\mathrm{C}_{0} = \mathrm{C}_{0}(n, p_{0},\lambda, \Lambda, \tau, \alpha_{0})\) is a positive constant and  $\displaystyle \psi(r) = \tau(r)+\int_{0}^{r} \frac{\tau(s)}{s}ds$.
\end{proposition}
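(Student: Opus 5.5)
We plan a standard geometric iteration built on Lemma~\ref{approxlemmadelta}, followed by a Dini-type summation. First comes a normalization step. Replace $u$ by $u - u(0) - Du(0)\cdot x$, and $F$ by $F(\cdot,x)-F(\mathbf{O}_n,0)$, absorbing the affine correction and the constant $\langle\mathfrak{B}(x),Du(0)\rangle$ into the right-hand side. After this, $u(0)=|Du(0)|=0$ and $F(\mathbf{O}_n,0)=0$. Next, apply a preliminary scaling $u_r(x)=u(rx)/(\text{const})$ with $r$ small and universal. By (A3), with $p_0>n$, the quantities $\|\theta_F\|_{L^{p_0}}$, $\|f-f(0)\|_{L^{p_0}}$ and $\|\mathfrak{B}\|_{L^\infty}$ are then at most $\sqrt{\delta^3}$, so Lemma~\ref{approxlemmadelta} applies. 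We choose $\hat\tau=\tau$; (LCC) holds by Remark~\ref{Remark1.4}.

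The induction claim is the following. There are $\rho\in(0,1/2)$ and quadratic polynomials $\mathfrak{P}_k(x)=\frac12 x^T\mathrm{M}_k x$ with $\mathfrak{P}_0=0$ such that:
\[
\sup_{\mathrm{B}_{\rho^k}}|u-\mathfrak{P}_k|\le \delta\,\rho^{2k}\tau(\rho^k),\qquad \|\mathrm{M}_{k+1}-\mathrm{M}_k\|\le \mathrm{C}\,\delta\,\tau(\rho^k).
\]
The case $k=1$ is Lemma~\ref{approxlemmadelta}. For the inductive step we set
\[
v_k(x)=\frac{(u-\mathfrak{P}_k)(\rho^k x)}{\rho^{2k}\tau(\rho^k)}.
\]
This function solves $F_k(D^2v_k,x)+\langle\mathfrak{B}_k,Dv_k\rangle=f_k$, where
\[
F_k(\mathrm{X},x)=\frac{1}{\tau(\rho^k)}\Big[F\big(\tau(\rho^k)\mathrm{X}+\mathrm{M}_k,\rho^kx\big)-F(\mathrm{M}_k,0)\Big],
\qquad \mathfrak{B}_k(x)=\rho^k\mathfrak{B}(\rho^kx).
\]
The right-hand side $f_k$ is $\tau(\rho^k)^{-1}\big(f(\rho^kx)-f(0)\big)$ plus correction terms, namely $\tau(\rho^k)^{-1}\langle\mathfrak{B}(\rho^k x),\rho^k\mathrm{M}_k x\rangle$ and the $\theta_F$ contribution at $\mathrm{M}_k$. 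The operator $F_k$ keeps the constants $\lambda,\Lambda$ and the $C^1$ modulus $\omega$, and it is normalized at the origin because $F(\mathrm{M}_k,0)=0$ is preserved inductively (this is where the condition $F(D^2\mathfrak{P},0)=0$ in the lemma is used). It remains to check the smallness hypotheses.

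\begin{itemize}
\item By (A3), $\|f_k\|_{L^{p_0}(\mathrm{B}_1)}\lesssim \mathrm{C}_f\,\tau(\rho^k)/\tau(\rho^k)$ up to the scaling constant. Likewise $\|\theta_{F_k}\|_{L^{p_0}}\lesssim(1+\|\mathrm{M}_k\|)\,\mathrm{C}_{\theta_F}$, using $\|\mathrm{M}_k\|\le \mathrm{C}\delta\sum_j\tau(\rho^j)\le \mathrm{C}\delta\,\mathfrak{I}_\tau$.
\item The drift term is controlled by $\rho^k\,\mathfrak{B}$ and by $\rho^k\|\mathrm{M}_k\|\,\tau(\rho^k)^{-1}$. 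The latter tends to $0$ by Remark~\ref{Remark1.4}.
\item The compatibility conditions (A4) control the ratios $\tau(\rho^{k})/\tau(\rho^{k+1})$ that arise when passing from scale $k$ to scale $k+1$. Condition (ii) with $s=\rho$ gives $\rho^{\alpha_0}\tau(\rho^k)\le \frac12\tau(\rho^{k+1})$. This lets the lemma's output $\delta\rho^2\tau(\rho)$ at scale $k$ be rescaled into $\delta\rho^{2(k+1)}\tau(\rho^{k+1})$, after first applying the lemma with a modulus $\hat\tau$ whose value at $\rho$ is comparable to $\rho^{\alpha_0}$.
\end{itemize}

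This bookkeeping is the main obstacle: the data constants must stay uniformly small along all scales while the scale-dependent modulus $\tau(\rho^k)$ is factored out. We would fix $\rho$ first, from Lemma~\ref{aproxlemma}, and then pick $\delta$ and the preliminary scaling so that every condition holds for all $k$ simultaneously. Applying the lemma to $v_k$ yields $\mathfrak{P}^\ast$ with $\|\mathrm{M}^\ast\|\le \mathrm{C}\delta$, and we set $\mathrm{M}_{k+1}=\mathrm{M}_k+\tau(\rho^k)\mathrm{M}^\ast$.

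Conclusion. Since $\sum_k\tau(\rho^k)\le \mathrm{C}\int_0^1\tau(s)s^{-1}\,ds<\infty$, the matrices $\mathrm{M}_k$ converge to some $\mathrm{M}_\infty$ with
\[
\|\mathrm{M}_\infty-\mathrm{M}_k\|\le \mathrm{C}\delta\sum_{j\ge k}\tau(\rho^j)\le \mathrm{C}\delta\int_0^{\rho^{k-1}}\frac{\tau(s)}{s}\,ds.
\]
Given $r\in(0,1/2)$, choose $k$ with $\rho^{k+1}<r\le\rho^k$. Then
\[
\sup_{\mathrm{B}_r}|u-\mathfrak{P}_\infty|\le \delta\rho^{2k}\tau(\rho^k)+\rho^{2k}\|\mathrm{M}_\infty-\mathrm{M}_k\|.
\]
Using monotonicity of $\tau$, together with (A4)(i) or (ii) to compare $\tau(\rho^{k})$ with $\tau(r)$, this is at most $\mathrm{C}_0\delta\, r^2\psi(r)$. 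Undoing the normalization gives the claim for the original $u$, with the affine part absorbed into $\mathfrak{P}$.
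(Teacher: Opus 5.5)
Your iteration is the same as the paper's. You use Lemma~\ref{approxlemmadelta} with $\hat\tau=\tau$ at the first step and with $\hat\tau(t)=t^{\alpha_0}$ together with (A4)(ii) afterwards, then bound $\|\mathrm{M}_k\|$, sum the Dini series, and interpolate between dyadic scales. The gap is in your normalization, and it breaks the induction.

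\textbf{Why the normalization fails.} Under (A1) you already have $F(\mathbf{O}_n,x)\equiv 0$, so replacing $F$ by $F(\cdot,x)-F(\mathbf{O}_n,0)$ does nothing. After you subtract the affine part, the right-hand side still equals $c_0:=f(0)-\langle\mathfrak{B}(0),Du(0)\rangle$ at the origin, and $c_0\neq 0$ in general. Lemma~\ref{approxlemmadelta} needs $\|f\|_{L^{p_0}}$ small, not $\|f-f(0)\|_{L^{p_0}}$. A large preliminary divisor makes $c_0$ small once, but that does not survive the iteration. Because you impose $F(\mathrm{M}_k,0)=0$, the term $F(\mathrm{M}_k,0)$ you subtract in $F_k$ is zero, so nothing balances the $-f(0)$ you wrote into $f_k$. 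The true $f_k$ contains the constant $c_0/\tau(\rho^k)$. Hence $\|f_k\|_{L^{p_0}(\mathrm{B}_1)}\to\infty$ and the lemma cannot be reapplied for large $k$.

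The induction claim is in fact self-contradictory when $c_0\neq0$. If it held, testing the equation at $0$ with $\tfrac12x^T\mathrm{M}_\infty x\pm\varepsilon|x|^2$ would force $F(\mathrm{M}_\infty,0)$ to equal the value of the right-hand side at $0$. On the other hand, $F(\mathrm{M}_k,0)=0$ for all $k$ gives $F(\mathrm{M}_\infty,0)=0$.

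\textbf{The fix is the paper's quadratic shift.} By (A1), $t\mapsto F(t\,\mathrm{Id}_n,0)$ is continuous and onto $\mathbb{R}$, so pick $\mathfrak{A}_0$ with $F(\mathfrak{A}_0,0)=c_0$. Subtract $\tfrac12x^T\mathfrak{A}_0x$ from $u$ as well, and work with $F(\mathfrak{A}_0+\cdot\,,x)-F(\mathfrak{A}_0,0)$, suitably rescaled. Its oscillation is at most $(1+\|\mathfrak{A}_0\|)\theta_F$. The new source is
\[
f(x)-f(0)-\langle\mathfrak{B}(x)-\mathfrak{B}(0),Du(0)\rangle-\langle\mathfrak{B}(x),\mathfrak{A}_0x\rangle .
\]
It vanishes at $0$. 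Its $L^{p_0}$-averages over $\mathrm{B}_s$ are $O(\tau(s))$, by (A3) and $s\lesssim\tau(s)$ (Remark~\ref{Remark1.4}).

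With $f(0)=0$ arranged this way, your $f_k$ needs no $-f(0)$ term. It is controlled by $\mathrm{C}_f$ and the drift contribution $\frac{\rho^k}{\tau(\rho^k)}\|\mathfrak{B}\|_{L^\infty}\|\mathrm{M}_k\|$. The rest of your argument then goes through as in the paper, and $u(0)+Du(0)\cdot x+\tfrac12x^T\mathfrak{A}_0x$ is absorbed into the final polynomial.
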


\begin{proof}
Initially, we know that \(u \in C^{1,\alpha}_{\mathrm{loc}}\) by Theorem \ref{C1alpharegularity}. In particular, \(Du\) is well-defined in \(\mathrm{B}_{1/2}\), and we have
\begin{equation}\label{est1Lemma5.1}
\|u\|_{C^{1,\alpha}(\mathrm{B}_{1/2})} \leq \mathrm{C}_{0} \left( \|u\|_{L^{\infty}(\mathrm{B}_{1})} + \|f\|_{L^{p_{0}}(\mathrm{B}_{1})} \right) \defeq \mathrm{C}_{1}.
\end{equation}

With this observation in mind, we may assume, without loss of generality, that \(u(0) = f(0) = |Du(0)| = 0\) and that the assumptions of Lemma \ref{approxlemmadelta} are satisfied for \(u\), \(\mathfrak{B}\), \(f\), and \(\theta_{F}\) (cf. \cite{Lian-Wang-Zhang2024}). 
Otherwise, since the map \(F(\cdot,0): \mathrm{Sym}(n) \to \mathbb{R}\) is surjective by the structural condition {\rm(A1)},
there exists a matrix \(\mathfrak{A}_{0} \in \mathrm{Sym}(n)\) such that
\[
F(\mathfrak{A}_{0},0)= f(0)-\langle \mathfrak{B}(0), Du(0)\rangle .
\]
Accordingly, it suffices to consider the rescaled function
\[
\tilde{u}(x)
\coloneqq
\frac{u(rx)-u(0)-\langle Du(0), rx\rangle-\tfrac{r^{2}}{2}x^{T}\mathfrak{A}_{0}x}{r^{2}\mathcal{K}},
\]
which is a viscosity solution of
\[
\tilde{F}(D^{2}\tilde{u},x)+\langle \tilde{\mathfrak{B}}(x),D\tilde{u}\rangle=\tilde{f}(x)
\quad \text{in } \mathrm{B}_{1},
\]
where
\[
\tilde{F}(\mathrm{M},x)
\coloneqq
\mathcal{K}^{-1}\!\left[ F(\mathcal{K}\mathrm{M}+\mathfrak{A}_{0},rx)-F(\mathfrak{A}_{0},0)\right],
\]
and
\[
\tilde{f}(x)
\coloneqq
\mathcal{K}^{-1}\!\left[ f(rx)-f(0)
-\langle \mathfrak{B}(rx)-\mathfrak{B}(0),Du(0)\rangle
+\langle \mathfrak{B}(rx),\mathfrak{A}_{0}rx\rangle \right],
\qquad
\tilde{\mathfrak{B}}(x)\coloneqq r\,\mathfrak{B}(rx).
\]
We now set
\[
r \coloneqq
\min\!\left\{
r_{0},
\tau^{-1}\!\left(\frac{\delta^{3/2}}{|\mathrm{B}_{1}|^{1/p_{0}}\mathrm{C}_{\theta_{F}}}\right),
\frac{\delta^{3/2}}{1+\|\mathfrak{B}\|_{L^{\infty}(\mathrm{B}_{1};\mathbb{R}^{n})}}
\right\} \quad \text{and} \quad 
\mathcal{K} \coloneqq
\max\!\left\{
1,
\frac{\mathrm{C}_{1}r^{\alpha-1}+\|\mathfrak{A}_{0}\|}{\delta},
\frac{\mathrm{C}_{2}}{\delta^{3/2}},
\frac{\mathrm{C}_{2}}{\mathrm{C}_{f}}
\right\},
\]
where \(\mathrm{C}_{1}>0\) is the constant appearing in \eqref{est1Lemma5.1},
\[
\mathrm{C}_{2}
\coloneqq
|\mathrm{B}_{1}|^{1/p_{0}}
\Big(
\mathrm{C}_{f}
+(2\|\mathfrak{A}_{0}\|+\mathrm{C}_{1})\mathrm{C}_{\mathfrak{B}}
+\|\mathfrak{B}\|_{L^{\infty}(\mathrm{B}_{1})}\|\mathfrak{A}_{0}\|
\Big)
\max\{1,\tau(1)\},
\]
and \(r_{0}\ll1\) is chosen so that, for all \(0<s<r_{0}\),
\[
s\leq \tau(s),
\]
cf.\ Remark~\ref{Remark1.4}.

With these choices, it is straightforward to verify that \(\tilde{u}\) satisfies the assumptions of Lemma~\ref{approxlemmadelta}, and that
\[
\tilde{u}(0)=|D\tilde{u}(0)|=\tilde{f}(0)=\tilde{F}(\mathbf{O}_{n},0)=0.
\]

To prove the desired result, we claim that there exist constants \(\rho_{0} \in (0, e^{-1}]\), \(\mathrm{C}_{\mathscr{H}} > 0\), and a sequence of quadratic polynomials \((\mathfrak{P}_{k})_{k \geq 0}\) of the form
\[
\mathfrak{P}_{k}(x) = \frac{1}{2} x^{T} \mathrm{M}_{k} x
\]
such that:
\begin{itemize}
\item[(i)] \(F(\mathrm{M}_{k}, 0) = 0\);
\item[(ii)] \(\displaystyle \sup_{\mathrm{B}_{\rho_{0}^{k}}} |u - \mathfrak{P}_{k}| \leq \delta \rho_{0}^{2k} \tau(\rho_{0}^{k})\);
\item[(iii)] \(\| \mathrm{M}_{k} - \mathrm{M}_{k-1} \| \leq \mathrm{C}_{\mathscr{H}} \delta \tau(\rho_{0}^{k-1})\);
\end{itemize}
with \(\mathfrak{P}_{0} = \mathfrak{P}_{-1} = 0\). In fact, we set \(\mathrm{C}_{\mathscr{H}} > 0\) as the constant given by Lemma \ref{approxlemmadelta}. Now, given \(\rho \in (0, e^{-1}]\), by the Dini continuity of \(\tau\) and the definition of \(\psi\), we apply the integral test to obtain
\begin{equation}\label{est2Lemma5.1}
\sum_{k=1}^{\infty} \tau(\rho^{k-1}) \leq \tau(1) + \int_{1}^{\infty} \tau(\rho^{t-1}) \, dt = \tau(1) + \frac{1}{\ln \left(\frac{1}{\rho}\right)} \int_{0}^{1} \frac{\tau(s)}{s} \, ds \leq \psi(1).
\end{equation}

Next, assumption \((\mathrm{A4})\) implies the existence of a constant \(\rho_{1} \in (0, r_{0})\) such that
\begin{equation}\label{est3Lemma5.1}
\sup_{0 \leq r \leq 1} \frac{\tau(r \rho_{1})}{\tau(r)} \leq \frac{\delta^{3/2}}{(1 + \mathrm{C}_{\mathscr{H}} \psi(1)) \mathrm{C}_{\theta_{F}}}, \quad \sup_{k \in \mathbb{N}} \frac{\rho_{1}^{\alpha_{0}} \tau(\rho_{1}^{k})}{\tau(\rho_{1}^{k+1})} \leq 1,
\end{equation}
and
\begin{equation}\label{est3'Lemma5.1}
\sup_{0 < r \leq \rho_{1}} \frac{r}{\tau(r)} \leq \frac{\delta^{1/2}}{2 \left( 1 + |\mathrm{B}_{1}|^{1/p_{0}} \psi(1) \mathrm{C}_{\mathscr{H}} \| \mathfrak{B} \|_{L^{\infty}(\mathrm{B}_{1}; \mathbb{R}^{n})} \right)}.
\end{equation}
We may also assume, without loss of generality, that \(\mathrm{C}_{f} \leq \frac{\delta^{3/2}}{2 |\mathrm{B}_{1}|^{1/p_{0}}}\) and \(\mathrm{C}_{\theta_{F}} \leq \frac{\delta^{3/2}}{|\mathrm{B}_{1}|^{1/p_{0}} (1 + \mathrm{C}_{\mathscr{H}} \delta \psi(1))}\) (For clarity, we invoke here a standard normalization and scaling argument, analogous to the one employed in the derivation of the Schauder estimates in Caffarelli--Cabré's monograph \cite[Ch.~8, Thm.~8.1]{CafCabre1995}
).

With these reductions, we proceed by induction. Indeed, we may select \(0 < \rho_{0} \leq \min \{ e^{-1}, \rho, \rho_{1} \}\). The case \(k = 0\) is immediate, and the case \(k = 1\) follows from Lemma \ref{approxlemmadelta} by taking \(\hat{\tau} = \tau\).

Now, assume that the statement holds for some \(k\geq 1\). We define the auxiliary function
\[
v_{k}(x) = \frac{(u - \mathfrak{P}_{k})(\rho_{0}^{k} x)}{\rho_{0}^{2k} \tau(\rho_{0}^{k})},
\]
which solves in the viscosity sense
\[
F_{k}(D^{2} v_{k}, x) + \langle \mathfrak{B}_{k}(x), D v_{k} \rangle = f_{k}(x) \quad \text{in } \mathrm{B}_{1},
\]
where
\[
\left\{
\begin{array}{rcl}
F_{k}(\mathrm{M}, x) & \defeq & \frac{1}{\tau(\rho_{0}^{k})} F \left( \tau(\rho_{0}^{k}) \mathrm{M} + \mathrm{M}_{k}, \rho_{0}^{k} x \right), \\
\mathfrak{B}_{k}(x) & \defeq & \rho_{0}^{k} \mathfrak{B}(\rho_{0}^{k} x), \\
f_{k}(x) & \defeq & \frac{1}{\tau(\rho_{0}^{k})} \left[ f(\rho_{0}^{k} x) - \langle \mathfrak{B}(\rho_{0}^{k} x), \rho_{0}^{k} \mathrm{M}_{k} x \rangle \right].
\end{array}
\right.
\]
Since \(\rho_{0} \leq 1\), it is clear that \(\| \mathfrak{B}_{k} \|_{L^{\infty}(\mathrm{B}_{1}; \mathbb{R}^{n})} \leq \delta^{3/2}\). Moreover, since \(\mathrm{M}_{0} = 0\), it follows from the induction hypothesis and \eqref{est2Lemma5.1} that
\begin{equation}\label{est4Lemma5.1}
\| \mathrm{M}_{k} \| \leq \sum_{j = 1}^{k} \| \mathrm{M}_{j} - \mathrm{M}_{j-1} \| \leq \mathrm{C}_{\mathscr{H}} \delta \sum_{j = 1}^{\infty} \tau(\rho_{0}^{j-1}) \leq \mathrm{C}_{\mathscr{H}} \delta \psi(1).
\end{equation}

On the other hand, it can be verified that
\begin{eqnarray}
\|f_{k}\|_{L^{p_{0}}(\mathrm{B}_{1})} &\leq& \frac{|\mathrm{B}_{1}|^{1/p_{0}}}{\tau(\rho_{0}^{k})} \left[ \left( \intav{\mathrm{B}_{\rho_{0}^{k}}} |f(x)|^{p_{0}} \, dx \right)^{1/p_{0}} + \|\mathfrak{B}\|_{L^{\infty}(\mathrm{B}_{1}; \mathbb{R}^{n})} \|\mathrm{M}_{k}\| \rho_{0}^{k} \right] \nonumber \\
&\leq& |\mathrm{B}_{1}|^{1/p_{0}} \mathrm{C}_{f} + \frac{\rho_{0}^{k}}{\tau(\rho_{0}^{k})} |\mathrm{B}_{1}|^{1/p_{0}} \|\mathfrak{B}\|_{L^{\infty}(\mathrm{B}_{1}; \mathbb{R}^{n})} \mathrm{C}_{\mathscr{H}} \delta \psi(1) \nonumber \\
&\leq& \delta^{3/2}, \label{est5Lemma5.1}
\end{eqnarray}
where we have used estimates \eqref{est3'Lemma5.1} and \eqref{est4Lemma5.1}. Moreover, by the bound on \(\|\mathrm{M}_{k}\|\), we obtain
\[
\theta_{F_{k}}(x) \leq \frac{1 + \mathrm{C}_{\mathscr{H}} \delta \psi(1)}{\tau(\rho_{0}^{k})} \theta_{F}(\rho_{0}^{k} x).
\]
Consequently, \(\| \theta_{F_{k}} \|_{L^{p_{0}}(\mathrm{B}_{1})} \leq \delta^{3/2}\). Since \(v_{k}(0) = |D v_{k}(0)| = 0\) and \(F_{k}\) satisfies \(\mathrm{(A1)}\)–\(\mathrm{(A2)}\) with the same ellipticity constants and modulus of continuity, we can apply Lemma \ref{approxlemmadelta} once again with \(\hat{\tau}(t) = t^{\alpha_{0}}\) to conclude that there exists a quadratic polynomial \(\overline{\mathfrak{P}}(x) = \frac{1}{2} x^{T} \overline{\mathrm{M}} x\) with universally bounded coefficients such that
\begin{equation}
\sup_{\mathrm{B}_{\rho_{0}}} |v_{k} - \overline{\mathfrak{P}}| \leq \delta \rho_{0}^{2 + \alpha_{0}} \quad \text{and} \quad F_{k}(\overline{\mathrm{M}}, 0) = 0.
\end{equation}

We now define the quadratic polynomial
\[
\mathfrak{P}_{k+1}(x) = \mathfrak{P}_{k}(x) + \rho_{0}^{2k} \tau(\rho_{0}^{k}) \overline{\mathfrak{P}}(\rho_{0}^{-k} x)
\]
and observe that
\[
F(\mathrm{M}_{k+1}, 0) = \tau(\rho_{0}^{k}) F_{k}(\overline{\mathrm{M}}, 0) = 0, \quad \|\mathrm{M}_{k+1} - \mathrm{M}_{k}\| \leq \tau(\rho_{0}^{k}) \|\overline{\mathrm{M}}\| \leq \mathrm{C}_{\mathscr{H}} \delta \tau(\rho_{0}^{k}),
\]
and
\[
\sup_{\mathrm{B}_{\rho_{0}^{k+1}}} |u - \mathfrak{P}_{k+1}| = \rho_{0}^{2k} \tau(\rho_{0}^{k}) \sup_{\mathrm{B}_{\rho_{0}}} |v_{k} - \overline{\mathfrak{P}}| \leq \rho_{0}^{2(k+1)} \delta \tau(\rho_{0}^{k}) \rho_{0}^{\alpha_{0}}.
\]
By estimate \eqref{est3Lemma5.1}, it follows that
\[
\sup_{\mathrm{B}_{\rho_{0}^{k+1}}} |u - \mathfrak{P}_{k+1}| \leq \delta \rho_{0}^{2(k+1)} \tau(\rho_{0}^{k+1}).
\]
This completes the proof of the claim.

By the construction of the sequence of quadratic polynomials \((\mathfrak{P}_{k})\), we deduce that \((\mathrm{M}_{k})_{k \geq 0}\) forms a Cauchy sequence in \(\mathrm{Sym}(n)\), and hence there exists \(\displaystyle \mathrm{M}_{\infty} = \lim_{k \to \infty} \mathrm{M}_{k}\). Furthermore, by condition \((iii)\), we have
\begin{equation}\label{est7Lemma5.1}
\|\mathrm{M}_{k} - \mathrm{M}_{\infty}\| \leq \sum_{j = k + 1}^{\infty} \| \mathrm{M}_{j} - \mathrm{M}_{j - 1} \| \leq \mathrm{C}_{\mathscr{H}} \delta \sum_{j = k + 1}^{\infty} \tau(\rho_{0}^{j - 1}).
\end{equation}

Finally, define \(\mathfrak{P}_{\infty}(x) = \frac{1}{2} x^{T} \mathrm{M}_{\infty} x\) and fix \(r \in (0, \rho_{0}]\). In this case, we can select \(k \in \mathbb{N}\) such that \(\rho_{0}^{k+1} < r \leq \rho_{0}^{k}\). Then, by condition \((ii)\), estimate \eqref{est7Lemma5.1}, and the integral test, we obtain
\begin{eqnarray*}
\sup_{\mathrm{B}_{r}} |u - \mathfrak{P}_{\infty}| &\leq& \sup_{\mathrm{B}_{\rho_{0}^{k}}} |u - \mathfrak{P}_{k}| + \sup_{\mathrm{B}_{\rho_{0}^{k}}} |\mathfrak{P}_{k}(x) - \mathfrak{P}_{\infty}(x)| \\
&\leq& \delta \rho_{0}^{2k} \tau(\rho_{0}^{k}) + \mathrm{C}_{\mathscr{H}} \delta \rho_{0}^{2k} \sum_{j = k + 1}^{\infty} \tau(\rho_{0}^{j - 1}) \\
&\leq& \frac{\delta}{\rho_{0}^{2 + \alpha_{0}}} \rho_{0}^{2(k + 1)} \left( \tau(\rho_{0}^{k + 1}) + \mathrm{C}_{\mathscr{H}} \sum_{j = k + 1}^{\infty} \tau(\rho_{0}^{j}) \right) \\
&\leq& \frac{\delta}{\rho_{0}^{2 + \alpha_{0}}} r^{2} (1 + \mathrm{C}_{\mathscr{H}}) \psi(\rho_{0}^{k + 1}) \\
&\leq& \mathrm{C}_{0} \delta r^{2} \psi(r),
\end{eqnarray*}
where \(\mathrm{C}_{0} = \rho_{0}^{-(2 + \alpha_{0})} (1 + \mathrm{C}_{\mathscr{H}})\). This concludes the proof.
\end{proof}

\medskip

In conclusion, we will address the proof of the Evans-Krylov-type result: 

\begin{proof}[{\bf Proof of Corollary \ref{Corolario-EK}}]
As previously established, it suffices to show that $u$ belongs to the class $C^{2,\psi}$ at the origin. To this end, for some $r > 0$ to be chosen \textit{a posteriori}, we define $v : \mathrm{B}_1 \to \mathbb{R}$ by
\[
v(x) \defeq \frac{1}{r^2} u(rx) - \left[ \frac{1}{r^2} u(0) + \frac{1}{r} D u(0) \cdot x + \frac{1}{2} x^T D^2 u(0) x \right].
\]
It is straightforward to verify that
\begin{equation} \label{eq:normalization}
    v(0) = 0, \quad |D v(0)| = 0, \quad \text{and} \quad |D^2 v(x)| = |D^2 u(rx)-D^2 u(0)| \leq \varsigma(r),
\end{equation}
where $\varsigma: [0, \infty) \to [0, \infty)$ denotes the modulus of continuity of $D^2 u$.

We now select $0 < r \ll 1$ sufficiently small so that $\varsigma(r) \leq \mathrm{c}_n \delta_0$ and $\frac{r}{\tau(r)}< 1$ (Recall Remark~\ref{Remark1.4}), where $\mathrm{c}_n > 0$ is a dimensional constant and $\delta_0$ is the constant appearing in Theorem \ref{MainThm01}. With this choice, the function $v$ satisfies
\[
 \widetilde{F}(D^{2}v,x) +  \langle  \widetilde{\mathfrak{B}}(x), Dv\rangle =  \widetilde{f}(x) \quad \text{in} \quad \mathrm{B}_1,
\]
where
\[
\left\{
\begin{array}{rcl}
  \widetilde{F}(\mathrm{M}, x)   & \defeq &  F(\mathrm{M} + D^2 u(0), rx),\\
  \widetilde{\mathfrak{B}}(x)   & \defeq  & r\, \mathfrak{B}(rx), \\
  \widetilde{f}(x)   & \defeq & f(rx) - \left\langle r\, \mathfrak{B}(rx), \frac{1}{r} D u(0) + D^{2}u(0)x \right\rangle.
\end{array}
\right.
\]
Moreover, the structural assumptions \(\rm{(A1)}-\rm{(A4)}\) remain satisfied. Additionally, we emphasize that
\[
\tilde{\tau}(t) = \tilde{\mathrm{C}}\cdot \tau(rt),
\]
where
{\scriptsize{
$$
\tilde{\mathrm{C}} = \max\left\{\mathrm{C}_{\mathfrak{B}}, (1+\|D^2u\|_{L^{\infty}(\mathrm{B}_1)})\mathrm{C}_{\theta_F}, \mathrm{C}_{f} + \mathrm{C}_{\mathfrak{B}}\|Du\|_{L^{\infty}(\mathrm{B}_{1})}+  \mathrm{C}_{\mathfrak{B}} \|D^2 u\|_{L^{\infty}(\mathrm{B}_1)} +  \|\mathfrak{B}\|_{L^{\infty}(\mathrm{B}_1; \mathbb{R}^n)}\|D^2 u\|_{L^{\infty}(\mathrm{B}_1)\frac{rt}{\tau(rt)}}\right\}.
$$}}
Therefore, by applying Theorem \ref{MainThm01}, we obtain the desired regularity result. After performing a covering argument, we conclude that  \(v \in C^{2,\psi}(\mathrm{B}_{1/2})\) and consequently \(u \in C^{2,\psi}(\mathrm{B}_{r/2})\).
\end{proof}

\subsection*{Acknowledgments}

J. da Silva Bessa has been supported by FAPESP-Brazil under Grant No. 2023/18447-3. J.V. da Silva has received partial support from CNPq-Brazil under Grant No. 307131/2022-0, Chamada CNPq/MCTI No. 10/2023 - Faixa B - Consolidated Research Groups under Grant No. 420014/2023-3, and FAPESP-Brazil under the Grant No.  2025/09344-1 - Special Programs - Special Projects - First Projects - Call for Proposals (2025) - 1st Cycle. J.V. da Silva and L. Ospina have been supported by FAEPEX-UNICAMP (Project No. 2441/23, Special Calls - PIND - Individual Projects, 03/2023). We would like to express our sincere gratitude to the anonymous referees for their insightful comments and constructive suggestions, which substantially enhanced the final version of this manuscript. This article is part of the last author’s Master’s thesis. She gratefully acknowledges the Department of Mathematics at the Universidade Estadual de Campinas (UNICAMP), Brazil, for providing a stimulating and productive research environment during her Master’s studies.


\begin{thebibliography}{99}


\bibitem{Bao2002} Bao, J.
\textit{Fully nonlinear elliptic equations on general domains}.
Canad. J. Math. 54 (2002), no. 6, 1121–1141.

\bibitem{BhatWarr2021} Bhattacharya, A. and Warren, M.
\textit{$C^{2,\alpha}$ estimates for solutions to almost linear elliptic equations}.
Commun. Pure Appl. Anal. 20 (2021), no. 4, 1363–1383.

\bibitem{BNV1994} Berestycki, H., Nirenberg, L., and Varadhan, S. R.S.
\textit{The principal eigenvalue and maximum principle for second-order elliptic operators in general domains}.
Comm. Pure Appl. Math. 47 (1994), no. 1, 47–92.


\bibitem{CabreCaff2003} Cabr\'{e}, X. and Caffarelli, L.A.
\textit{Interior $C^{2,\alpha}$ regularity theory for a class of nonconvex fully nonlinear elliptic equations}.
J. Math. Pures Appl. (9) 82 (2003), no. 5, 573–612.

\bibitem{Caffarelli1989} Caffarelli, L.A.
\textit{Interior a priori estimates for solutions of fully nonlinear equations}.
Ann. of Math. (2) 130 (1989), no. 1, 189–213.

\bibitem{CafCabre1995} Caffarelli, L.A. and Cabr\'{e}, X.
\textit{Fully nonlinear elliptic equations}.
Amer. Math. Soc. Colloq. Publ., 43
American Mathematical Society, Providence, RI, 1995. vi+104 pp. ISBN:0-8218-0437-5

\bibitem{CCKS} Caffarelli, L.A., Crandall, M.G.,  Kocan, M. and \'{S}wi\c{e}ch, A.
\textit{On viscosity solutions of fully nonlinear equations with measurable ingredients}.
Comm, Pure Appl. Math. 49 (1996) (4), 365--397.

\bibitem{Campanato1963} Campanato, S.
\textit{Propriet\`{a} di h\"{o}lderianit\`{a} di alcune classi di funzioni}.
Ann. Scuola Norm. Sup. Pisa Cl. Sci. (3) 17 (1963), pp. 175–188.


\bibitem{Campanato1964} Campanato, S.
\textit{Propriet\`{a} di una famiglia di spazi funzionali}.
Ann. Scuola Norm. Sup. Pisa Cl. Sci. (3) 18 (1964), 137–160.

\bibitem{CLW2011} Cao, Y., Li, D. and Wang, L.
\textit{A priori estimates for classical solutions of fully nonlinear elliptic equations}.
Sci. China Math. 54 (2011), no. 3, 457–462.

\bibitem{DaSilva-PhDThesis} Da Silva, J.V. 
\textit{Sharp and improved regularity estimates to fully nonlinear equations
and free boundary problems}. Tese de doutorado. Fortaleza: Universidade Federal do Cear\'{a},
Centro de Ci\^{e}ncias, Programa de P\'{o}s-Gradua\c{c}\~{a}o em Matem\'{a}tica, p. 105  (2019).

\bibitem{daSdosP2019} Da Silva, J.V. and Dos Prazeres, D.
\textit{Schauder type estimates for ``flat'' viscosity solutions to non-convex fully nonlinear parabolic equations and applications}.
Potential Anal. 50 (2019), no. 2, 149–170.

\bibitem{JVGN21}  Da~Silva, J.~V. and Nornberg, G. \textit{Regularity estimates for fully nonlinear elliptic PDEs with general Hamiltonian terms and unbounded ingredients}. Calc. Var. Partial Differential Equations {\bf 60} (2021), no.~6, Paper No. 202, 40 pp. 

 \bibitem{João} Da Silva, J.V. and Ricarte, G.C. \textit{Regularidade El\'{i}ptica e Problemas de Fronteiras Livres}. (Portuguese)[Elliptic regularity and free boundary problems]
34º Colóq. Bras. Mat., 8
Instituto Nacional de Matemática Pura e Aplicada (IMPA), Rio de Janeiro, 2023. 156 pp.
ISBN:978-85-244-0532-7.


\bibitem{dosPrazTei2016} Dos Prazeres, D. and Teixeira, E.V.
\textit{Asymptotics and regularity of flat solutions to fully nonlinear elliptic problems}.
Ann. Sc. Norm. Super. Pisa Cl. Sci. (5) 15 (2016), 485–500.

\bibitem{Evans1982} Evans, L.C.
\textit{Classical solutions of fully nonlinear, convex, second-order elliptic equations}.
Comm. Pure Appl. Math. 35 (1982), no. 3, 333–363.


 \bibitem{FR-O} Fern\'{a}ndez-Real, X.; Ros-Oton, X. 
        \textit{Regularity Theory for Elliptic PDE}. Zur. Lect. Adv. Math., 28
EMS Press, Berlin, [2022], ©2022. viii+228 pp.
ISBN:978-3-98547-028-0.

\bibitem{Fefferman2009} Fefferman, C. \textit{Extension of \(C^{m,\omega}\)- smooth functions by linear operators}. Rev. Mat. Iberoamericana. 25 (2009), 1–48.

\bibitem{Friedman1964} Friedman,  A. 
\textit{Partial differential equations of parabolic type}. Prentice-Hall,
Inc., Englewood Cliffs, NJ, 1964, pp. xiv+347.

\bibitem{Gilbarg-Tru2001} Gilbarg d. and Trudinger, N.~S. \textit{Elliptic partial differential equations of second order}. Reprint of the 1998 edition, Classics in Mathematics, Springer, Berlin, 2001.

\bibitem{Goffi2024} Goffi, A.
\textit{High-order estimates for fully nonlinear equations under weak concavity assumptions}.
J. Math. Pures Appl. (9) 182 (2024), 223–252.

\bibitem{Han2000}  Han, Q. 
\textit{Schauder estimates for elliptic operators with applications to nodal
sets}.
J. Geom. Anal. 10.3 (2000), pp. 455–480.

\bibitem{Han-Lin-Book} Han, Q. and Lin, F. \textit{Elliptic partial differential equations}. Vol. 1. Courant Lecture Notes in Mathematics. New York University, Courant Institute of Mathematical Sciences, New York; American Mathematical Society, Providence, RI,
1997, pp. x+144.

\bibitem{HuangAIHP} Huang, Q. \textit{Regularity theory for $L^n$-viscosity solutions to fully nonlinear elliptic equations with asymptotical approximate convexity}. Ann. Inst. H. Poincar\'{e} C Anal. Non Lin\'eaire  36 (2019), no.~7, 1869--1902.

\bibitem{HuangPAMS} Huang, Q.
\textit{On the regularity of solutions to fully nonlinear elliptic equations via the Liouville property}.  Proc. Amer. Math. Soc. 130 (2002), no. 7, 1955–1959.

\bibitem{Kichenassamy2006} Kichenassamy, S. 
\textit{Chapter 5 Schauder type estimates and applications}. Handbook of Differential Equations. Vol. 3, pp. 401–464, Elsevier (2006) ISBN: 978-044452846-9.

\bibitem{Kovats97} Kovats, J.
\textit{Fully nonlinear elliptic equations and the Dini condition}. Comm. PDE, 22, 1911–1927 (1997).

\bibitem{Kovats99} Kovats, J.
\textit{Dini-Campanato spaces and applications to nonlinear elliptic equations}. Electronic JDE, 37, 1–20 (1999). 

\bibitem{Krylov1983} Krylov, N. V.
\textit{Boundedly inhomogeneous elliptic and parabolic equations in a domain}.
Izv. Akad. Nauk SSSR Ser. Mat. 47 (1983), no. 1, 75–108.

\bibitem{Kry96} Krylov, N.~V. {\it Lectures on elliptic and parabolic equations in H\"older spaces}, Graduate Studies in Mathematics, 12, Amer. Math. Soc., Providence, RI, 1996.

\bibitem{Lian2024} Lian, Y. \textit{Interior point-wise regularity for elliptic and parabolic
equations in divergence form and applications to nodal sets}. Preprint, arXiv:2405.07214 (2024).

\bibitem{Lian-Wang-Zhang2024} Lian, Y., Wang, L., and Zhang, K. \textit{Point-wise Regularity for Fully Nonlinear Elliptic Equations in General Forms}. Preprint, arXiv:2012.00324v2 (2024).

\bibitem{Lian-Zhang2023} Lian, Y. and Zhang, K. 
\textit{Boundary point-wise regularity and applications to the regularity of free boundaries}. 
Calc. Var. Partial Differential Equations 62.8
(2023), Paper No. 230, 32. 

\bibitem{Lind-Monn2013} Lindgren, E. and  Monneau, R. 
\textit{Pointwise estimates for the heat equation. Application to the free boundary of the obstacle problem with Dini coefficients.} Indiana Univ. Math. J. 62.1 (2013), pp. 171–199.

\bibitem{Lind-Monn2015} Lindgren, E. and  Monneau, R. 
\textit{Pointwise regularity of the free boundary for the parabolic obstacle problem}. Calc. Var. Partial Differential Equations 54.1
(2015), pp. 299–347.


\bibitem{Monneau2009} Monneau, R.
\textit{Pointwise estimates for Laplace equation. Applications to the free boundary of the obstacle problem with Dini coefficients.}
J. Fourier Anal. Appl. 15 (2009), no. 3, 279–335.

\bibitem{NV2007} Nadirashvili, N. and Vl\u adu\c t, S.
\textit{Nonclassical solutions of fully nonlinear elliptic equations}.
Geom. Funct. Anal. 17 (2007), no. 4, 1283-1296.

\bibitem{NV2008} Nadirashvili, N. and Vl\u adu\c t, S.
\textit{Singular viscosity solutions to fully nonlinear elliptic equations}.J. Math. Pures Appl. (9) 89 (2008), no. 2, 107-113.


\bibitem{Nirenberg1953} Nirenberg, L.
\textit{On nonlinear elliptic partial differential equations and H\"{o}lder continuity}.
Comm. Pure Appl. Math. 6 (1953), 103–156.

\bibitem{Peetre1966} Peetre, J.
\textit{On convolution operators leaving $L^{p,\lambda}$ spaces invariant}.
Ann. Mat. Pura Appl. (4) 72 (1966), 295–304.

\bibitem{PetSHaUral12} Petrosyan, A., Shahgholian, H. and Uraltseva, N.~N. {\it Regularity of free boundaries in obstacle-type problems}, Graduate Studies in Mathematics, 136, Amer. Math. Soc., Providence, RI, 2012.

\bibitem{Safonov1984} Safonov, M. V.
\textit{The classical solution of the elliptic Bellman equation}, 
Dokl. Akad. Nauk SSSR, 278, 1984, 810–813; English translation in Soviet Math Doklady, 30, 1984, 482–485.

\bibitem{Safonov1989} Safonov, M. V.
\textit{Classical solution of second-order nonlinear elliptic equations}.
Izv. Akad. Nauk SSSR Ser. Mat. 52 (1988), no. 6, 1272–1287, 1328; translation in Math. USSR-Izv. 33 (1989), no. 3, 597–612.

\bibitem{Savin2007} Savin, O.
\textit{Small perturbation solutions for elliptic equations}.
Comm. Partial Differential Equations 32 (2007), no. 4-6, 557–578.

\bibitem{Sim97} Simon, L. 
\textit{Schauder estimates by scaling}, Calc. Var. Partial Differential Equations
5 (1997), 391-407.

\bibitem{Teixeira2016} Teixeira, E.V.
\textit{Geometric regularity estimates for elliptic equations}. Mathematical Congress of the Americas, 185–201. Contemp. Math., 656
American Mathematical Society, Providence, RI, 2016
ISBN:978-1-4704-2310-0.

\bibitem{Teixeira2020} Teixeira, E.V.
\textit{Regularity theory for nonlinear diffusion processes}.
Notices Amer. Math. Soc. 67 (2020), no. 4, 475–483.


\bibitem{Trud1986} Trudinger, N.S.
\textit{A new approach to the Schauder estimates for linear elliptic equations}. Miniconference on operator theory and partial differential equations (North Ryde, 1986), 52–59.
Proc. Centre Math. Anal. Austral. Nat. Univ., 14
Australian National University, Centre for Mathematical Analysis, Canberra, 1986. ISBN:0-86784-517-1.

\bibitem{Wang2006} Wang, X.-J. \textit{Schauder estimates for elliptic and parabolic equations}, 
Chinese Ann. Math. Ser. B 27 (2006), 637-642.

\bibitem{WuNiu2023} Wu, D. and Niu, P.
\textit{Interior point-wise $C^{2,\alpha}$ regularity for fully nonlinear elliptic equations}.
Nonlinear Anal. 227 (2023), Paper No. 113159, 9 pp.

\bibitem{WH2022} Wu, Y. and Yu, H.
\textit{On the fully nonlinear Alt-Phillips equation}.
Int. Math. Res. Not. IMRN 2022, no. 11, 8540–8570.

\bibitem{ZC2002} Zou, X. and Chen, Y.Z.
\textit{Fully nonlinear parabolic equations and the Dini condition}.
Acta Math. Sin. (Engl. Ser.) 18 (2002), no. 3, 473–480.

\end{thebibliography}
\end{document}